\documentclass[11pt]{article}
\usepackage{amsmath, amssymb, amsthm, verbatim,enumerate,bbm}
\usepackage{indentfirst}

\date{\today}
\parindent 5mm
\parskip 0.2mm
\oddsidemargin  0pt \evensidemargin 0pt \marginparwidth 0pt
\marginparsep 0pt \topmargin 0pt \headsep 0pt \textheight 8.8in
\textwidth 6.6in

\allowdisplaybreaks

\theoremstyle{plain}
\newtheorem{theorem}{Theorem}[section]
\newtheorem{lemma}[theorem]{Lemma}
\newtheorem{claim}[theorem]{Claim}

\newtheorem{corollary}[theorem]{Corollary}

\newtheorem{remark}[theorem]{Remark}
\newtheorem{definition}[theorem]{Definition}

\newcommand{\BN}{\mathbb N }

\title{ILL}

\title{Law of the Iterated Logarithm for random graphs}

\author{ Asaf Ferber
\thanks{Department of Mathematics, Yale University. Email:
asaf.ferber@yale.edu.} \and Daniel Montealegre \thanks{Department of Mathematics, Yale University. Email: daniel.montealegre@yale.edu}\and Van Vu \thanks{Department of
Mathematics, Yale University. Email: van.vu@yale.edu; V. Vu's research is supported by NSF grant   DMS-1500944 and AFORS grant FA9550-12-1-0083} }

\begin{document}

\maketitle
\begin{abstract}

A milestone in Probability Theory is the \emph{law of the iterated logarithm} (LIL), proved by Khinchin and independently by Kolmogorov in the 1920s, which asserts that for iid random variables $\{t_i\}_{i=1}^{\infty}$ with mean $0$ and variance $1$

$$ \Pr \left[ \limsup_{n\rightarrow \infty} \frac{ \sum_{i=1}^n t_i }{\sigma_n \sqrt {2 \log \log n }} =1 \right] =1 . $$

 In this paper we prove that LIL holds for various
 functionals of random graphs and hypergraphs models.
 We first prove LIL for  the number of copies of a fixed  subgraph $H$.  Two harder results concern the number of
 global objects: perfect matchings and Hamiltonian cycles.  The  main new ingredient in these results  is  a   large deviation bound, which may be of independent interest.  For random $k$-uniform  hypergraphs, we obtain the Central Limit Theorem (CLT)  and LIL for the number of Hamilton cycles.

 \end{abstract}

\section{Introduction}

Let $\{t_i\}_{i=1}^{\infty}$ be an infinite sequence of iid random variables with mean $0$ and variance $1$. Two key results in probability theory are the central limit theorem and the law of the iterated logarithm. The \emph{central limit theorem}
(CLT) states that for $X_n:= \sum_{i=1}^n t_i$, one has

$$ \frac{ X_n  }{\sigma_n  }  \longrightarrow N(0,1),$$ where $\sigma_n := \sqrt {Var X_n }  =\sqrt n $ and $N(0,1)$ denotes the standard gaussian distribution. The \emph{law of the iterated logarithm} (LIL), proved by Khinchin \cite{Khinchin} and Kolmogorov \cite{Kolmogorov}, asserts that

$$ \Pr \left[ \limsup_{n\rightarrow \infty} \frac{ X_n  }{\sigma_n \sqrt {2 \log \log n }} =1 \right] =1 . $$

\noindent The $\log \log n$ term reveals  a subtle
correlation between the $X_i$'s, especially those with indices close to each other.

The theory of random graphs (hypergraphs) contains several central limit theorems, some of which are among the most well known
results in the field. It is natural to wonder if the LIL also holds.
The goal of this paper is to initiate this investigation  and provide
the first few rigorous results. To our surprise,  this  natural problem has not been   studied before and we hope this paper will motivate further activity.

Let $p$ be a fixed constant in $(0,1)$. We consider the infinite random hypergraph $H^k(\BN, p)$ on the vertex set $\BN$ where we add every $k$-subset $S\subseteq \BN$ as an edge with probability $p$ independently. This gives rise to a nested sequence of
  random hypergraphs where $H^k(n,p)$ is defined by restriction to the first $n$ vertices $[n]:=\{1, \dots, n \}$. The atom iid variables are $t_{S}$ which represent the edges ($t_{S} =1$ if $S$ forms an edge and $0$ otherwise). In the case of graphs (that is $k=2$) we denote $H^k(\BN,p)$ by $G(\BN,p)$ and $H^k(n,p)$ by $G(n,p)$. In this way, we obtain the usual binomial random graph model. We also consider the infinite random bipartite graph $B(\BN,p)$ on vertex set $A\cup B$, where $A$ and $B$ are two disjoint copies of $\BN$, and every pair $ab\in A\times B$ forms an edge with probability $p$, independently. Let $B(n,p)$ be obtained from $B(\BN,p)$ by restricting $A$ and $B$ to their first $n$ elements.

Many CLT's in the theory of random graphs involve
some sort of counting functions.  For instance, counting
the number of copies of a fixed graph (such as triangles or $C_4$'s)
is a classical problem; see \cite{KarR, Kar, Rucinski} and the references therein (the interested reader can also find a detailed discussion in \cite{JansonLuczakRucinski}, Chapter 6). In this case, the question of when the CLT holds is well understood.

\begin{theorem} Fix a nonempty graph $G$, and let $X_n$ count the number of copies of $G$ in $G(n,p)$. Let $m(G)=\max\{|E(H)|/|V(H)| : H\subset G\}$. If $p=p(n)$ is such that $np^{m(G)}\rightarrow \infty$ and $n^2(1-p)\rightarrow \infty$, then $(X_n-\mathbb{E}[X_n])/\sqrt{Var(X_n)}$ tends in distribution to $N(0,1)$.
\end{theorem}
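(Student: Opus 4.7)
The plan is to apply the method of moments. Since $N(0,1)$ is determined by its moments, it suffices to prove that for every fixed integer $k \geq 1$,
\[
\mathbb{E}\Bigl[\Bigl(\frac{X_n - \mathbb{E}[X_n]}{\sigma_n}\Bigr)^k\Bigr] \longrightarrow \mu_k,
\]
where $\mu_k$ denotes the $k$-th moment of a standard Gaussian. Write $X_n = \sum_{\sigma \in \mathcal{F}_n} I_\sigma$, where $\mathcal{F}_n$ enumerates the copies of $G$ in $K_n$ and $I_\sigma = \mathbbm{1}[\sigma \subseteq G(n,p)]$. A standard computation gives $\mathbb{E}[X_n] = |\mathcal{F}_n|\, p^{e_G}$. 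For the variance, expand $\sigma_n^2 = \sum_{\sigma,\tau} \mathrm{Cov}(I_\sigma, I_\tau)$; only pairs sharing at least one edge of $K_n$ contribute, and grouping by the isomorphism type of the shared subgraph $H \subseteq G$ with $e_H \geq 1$ yields
\[
\sigma_n^2 = \Theta\Bigl(\max_{H \subseteq G,\ e_H \geq 1} n^{2v_G - v_H}\, p^{2e_G - e_H}\,(1 - p^{e_G - e_H})\Bigr).
\]
The hypotheses $np^{m(G)} \to \infty$ and $n^2(1-p) \to \infty$ together ensure $\sigma_n \to \infty$ and fix the correct normalization.

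\textbf{Decomposition by intersection graph.} Expanding
\[
\mathbb{E}\bigl[(X_n - \mathbb{E}[X_n])^k\bigr] = \sum_{\sigma_1, \ldots, \sigma_k \in \mathcal{F}_n} \mathbb{E}\Bigl[\prod_{i=1}^k (I_{\sigma_i} - p^{e_G})\Bigr],
\]
I would classify $k$-tuples by their \emph{intersection graph} $\Gamma$ on $[k]$, where $ij \in E(\Gamma)$ iff $\sigma_i$ and $\sigma_j$ share an edge of $K_n$. If $\Gamma$ has an isolated vertex $i$, then $I_{\sigma_i} - p^{e_G}$ is independent of the remaining factors and kills the expectation, so only graphs with minimum degree at least $1$ contribute. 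The Wick-type claim is that the dominant contribution comes from $\Gamma$ being a perfect matching: each matched pair independently contributes a net $\sigma_n^2$, and there are $(k-1)!!$ matchings, producing $\mu_k\, \sigma_n^k$ for even $k$ and $o(\sigma_n^k)$ for odd $k$.

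\textbf{Main obstacle.} The crux is showing that every $\Gamma$ with a connected component of size $\geq 3$ contributes only $o(\sigma_n^k)$. Heuristically, attaching one further copy to an already existing overlap component adds a factor of at most $n^{v_G - v_H}\, p^{e_G - e_H}$ (for some $H \subseteq G$ with $e_H \geq 1$), while opening a fresh independent pair of overlapping copies gains $\sigma_n^2$. The hypothesis $np^{m(G)} \to \infty$ is precisely the quantitative statement that $n^{v_H}p^{e_H} \to \infty$ for every $H \subseteq G$ with $e_H \geq 1$ (take the $v_H$-th root and use $e_H/v_H \leq m(G)$), which is exactly what is needed to make fresh disjoint pairs combinatorially cheaper than extending a component. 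The complementary condition $n^2(1-p) \to \infty$ handles the dense regime $p \to 1$: there the variance picks up the $(1-p)$ factor from a single disagreement edge, and the same analysis can be run on the complementary random graph via the $p \leftrightarrow 1-p$ symmetry. Executing this bound uniformly over all non-matching $\Gamma$, while keeping careful track of the constants and the $(1-p)$ factors near $p=1$, is the main combinatorial effort.
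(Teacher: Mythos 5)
The paper never proves this statement: it appears as quoted background (Ruci\'nski's theorem, cited as \cite{KarR, Kar, Rucinski}), so there is no in-paper argument to compare against. Your outline follows the classical route — the method of moments with a Wick-type classification of $k$-tuples by their overlap pattern — which is indeed the strategy of Ruci\'nski's original proof, and your observation that $np^{m(G)}\to\infty$ is equivalent to $n^{v_H}p^{e_H}\to\infty$ for every $H\subseteq G$ with $e_H\ge 1$ is correct and is the right way to use that hypothesis.

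However, as written there are genuine gaps. First, the entire difficulty of the theorem sits in the claim that intersection graphs $\Gamma$ containing a component of size at least $3$ contribute $o(\sigma_n^k)$; you assert this with a one-line heuristic, but executing it uniformly over all $\Gamma$ and over the whole range of $p$ is the theorem, not a routine verification. Second, and more seriously, your treatment of the dense regime does not work as stated: there is no $p\leftrightarrow 1-p$ symmetry for subgraph counts, since $X_n$ is not a copy count in the complement graph but an alternating sum of extension counts, so ``running the same analysis on the complementary random graph'' is not available. The problem it was meant to solve is real: as $p\to 1$ one has $\sigma_n^2\asymp (1-p)\cdot n^{2v_G-2}$ (note also that the covariance factor is $1-p^{e_H}$, not $1-p^{e_G-e_H}$ as in your variance display), whereas bounding the centered expectation over a connected component by the uncentered one, $n^{v}p^{e}$, loses every factor of $(1-p)$ and therefore cannot yield $o(\sigma_n^k)$ when $1-p\to 0$ with $n^2(1-p)\to\infty$. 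A standard repair is to expand each centered indicator as $I_\sigma-p^{e_G}=\sum_{\emptyset\neq S\subseteq E(\sigma)}p^{e_G-|S|}\prod_{e\in S}(\chi_e-p)$, where $\chi_e$ are the edge indicators; then every surviving term in the $k$-fold product forces each selected $K_n$-edge to be covered at least twice and contributes a factor at most $p(1-p)$ per such edge, which recovers the needed powers of $(1-p)$ uniformly in $p$. With that (or by switching to Stein's method as in Barbour--Karo\'nski--Ruci\'nski) your outline can be completed, but in its current form the key lemma is unproven and the stated fix for $p\to 1$ would fail.
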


It is more challenging to count global objects.
 In \cite{Janson} Janson considered the numbers of spanning trees, perfect matchings and Hamilton cycles in random graphs. He  showed  these counting functions are
  log-normal for $G(n,p)$  in certain ranges of density.  Results of a similar flavor (and shorter proofs) were also  obtained later by Gao \cite{Gao}.
\begin{theorem}Let $X_n$ be the random variable that counts number of spanning trees, perfect matchings, or Hamilton cycles in $G(n,p)$. Fix a constant $p<1$. Let $p(n)\rightarrow p$. If $\liminf n^{1/2} p(n)>0$, then
$$
p(n)^{1/2}\left(\log(X_n)-\log(\mathbb{E}[X_n])+\frac{1-p(n)}{c p(n)}\right)\rightarrow N\left(0,\frac{2(1-p)}{c}\right)
$$where $c=1$ in the case of spanning trees and Hamilton cycles, and $c=4$ in the case of perfect matchings.
\end{theorem}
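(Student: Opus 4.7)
My plan is to condition on the number of edges $m := e(G_n)\sim\Bin(N,p)$, where $N=\binom{n}{2}$, and to exploit the fact that each of the three families consists of structures with a deterministic edge count $s=s(n)$: $s=n-1$ for spanning trees, $s=n/2$ for perfect matchings, $s=n$ for Hamilton cycles. Let $Z$ denote the total number of labeled structures of the given type in $K_n$, so $\mathbb{E}[X_n] = Z p^s$. Conditional on $m$ the graph is uniform on $m$-edge graphs and each fixed structure is present with probability $\binom{N-s}{m-s}/\binom{N}{m}$, hence
\[
\mathbb{E}[X_n\mid m] = Z\cdot\binom{N-s}{m-s}\Big/\binom{N}{m}.
\]
I would then work from the decomposition
\[
\log X_n - \log \mathbb{E}[X_n] \;=\; \log\frac{\mathbb{E}[X_n\mid m]}{\mathbb{E}[X_n]} + \log\frac{X_n}{\mathbb{E}[X_n\mid m]},
\]
and analyse the two pieces separately.

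For the first piece, set $u := (m-Np)/(Np)$ and expand term by term:
\[
\log\frac{\mathbb{E}[X_n\mid m]}{\mathbb{E}[X_n]} \;=\; \sum_{i=0}^{s-1}\log\frac{m-i}{(N-i)p} \;=\; s\log(1+u) \;-\; \frac{(1-p)s^2}{2Np}\bigl(1+o(1)\bigr).
\]
Direct substitution of the values of $s$ and $N$ shows that the deterministic correction $-(1-p)s^2/(2Np)$ tends to $-(1-p)/(cp)$ with $c=1,4,1$ in the three cases, exactly cancelling the $+(1-p)/(cp)$ term in the theorem. By the CLT for the binomial, $u\sqrt{Np/(1-p)}\Rightarrow N(0,1)$, so $p^{1/2}\cdot s\log(1+u)$ is asymptotically centered normal with variance $p\cdot s^2(1-p)/(Np)$; a second substitution identifies this limit as the claimed $2(1-p)/c$.

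The main obstacle is the second piece: showing that $\log(X_n/\mathbb{E}[X_n\mid m]) = o_p(p^{-1/2})$, equivalently $\mathrm{Var}(X_n\mid m)/\mathbb{E}[X_n\mid m]^2 = o(p^{-1})$ for typical $m$. This is the combinatorial heart of the argument and must be handled case by case. For spanning trees, Kirchhoff's matrix-tree theorem identifies $X_n$ with a principal minor of the random Laplacian, so concentration reduces to that of the determinant of a random symmetric matrix. For perfect matchings ($X_n$ is then a permanent), I would control $\mathbb{E}[X_n^2\mid m]$ by summing over ordered pairs $(M_1,M_2)$ graded by the cycle structure of $M_1\triangle M_2$ (a disjoint union of alternating even cycles), and compare with $\mathbb{E}[X_n\mid m]^2$. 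Hamilton cycles are treated analogously, with pairs of cycles graded by the sizes and positions of their shared paths. Once this conditional concentration is established, Slutsky's theorem combined with the first-piece analysis yields the stated log-normal CLT.
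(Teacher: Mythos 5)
This statement is the paper's Theorem 1.2, which is quoted as background from Janson \cite{Janson} (see also Gao \cite{Gao}); the paper itself gives no proof of it. Your skeleton is in fact the standard route behind Janson's theorem, and it is also the engine the paper uses for its own Theorems \ref{thm:main} and \ref{thm:main2}: condition on the edge count $E_n=m$, expand $\log\mathbb{E}[X_n\mid m]=\log\bigl(Z\,(m)_s/(N)_s\bigr)$ via Lemma \ref{lower factorial} to extract the centering $-(1-p)/(cp)$ and the term linear in $E_n^*$, and then argue that, given $m$, the count is concentrated so that all the fluctuation of $\log X_n$ comes from the edge count. Your first piece (the expansion and the identification of the constants $c=1,4$ and the variance $s^2(1-p)/N\to 2(1-p)/c$) is correct.

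The gap is in the second piece, and it is precisely the part that carries the real content. First, your claimed equivalence ``$\log(X_n/\mathbb{E}[X_n\mid m])=o_p(p^{-1/2})$ iff $\mathrm{Var}(X_n\mid m)/\mathbb{E}[X_n\mid m]^2=o(p^{-1})$'' is false: Chebyshev with a variance ratio that is merely $o(p^{-1})$ (which may be large when $p(n)\to 0$, as the hypothesis $\liminf n^{1/2}p(n)>0$ allows) gives no control of the ratio near $1$, and in particular gives no lower-tail bound at all --- $X_n$ could be far below $\mathbb{E}[X_n\mid m]$, or even $0$, making $\log X_n$ arbitrarily negative; a second-moment bound only yields an upper tail via Markov. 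What one actually needs is two-sided control of $\log X_{n,m}-\log\mathbb{E}[X_{n,m}]$ at scale $o(p^{-1/2})$ in $G(n,m)$ ($B(n,m)$). Second, even the sharper statement $\mathrm{Var}(X_{n,m})/\mathbb{E}[X_{n,m}]^2=o(1)$, which would suffice for fixed $p$, fails near the threshold: for Hamilton cycles with $m\approx p\binom n2$ and $p\asymp n^{-1/2}$ a direct computation of $\sum_a \alpha_a p^{-a}(m)_{2n-a}(N)_n^2/\bigl((N)_{2n-a}(m)_n^2\bigr)$ shows the second-moment ratio converges to a constant strictly larger than $1$ (the correction $\exp(\Theta(1/(np^2)))$ does not vanish), so ``ratio $\to 1$ in probability'' cannot be extracted this way in the full stated range; one needs the finer lower-tail argument that Janson supplies (the paper itself only borrows the one-sided comparison from the proof of \cite[Theorem 15]{Janson}, and its own Lemmas \ref{conc0} and \ref{conc1} give only constant-factor upper bounds $C^k\mathbb{E}[X_{n,m}]^k$, which are enough for the LIL upper bounds but not for this CLT). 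The spanning-tree case via the matrix-tree theorem is likewise only a pointer, not an argument. So the decomposition and the Gaussian part are right, but the conditional concentration step, as written, does not go through and is exactly where the work lies.
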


Throughout this paper, we use $X_n$ to denote a statistic of the random model under consideration (that is, $H^k(n,p)$ or $B(n,p)$), with mean $\mu_n$ and variance $\sigma_n^2$, which may vary in each occasion.
First, we consider the case $X_n$ is the number of copies of a fixed graph $H$ in $G(n,p)$ and prove

\begin{theorem}\label{main1} For a fixed graph $H$, let $X_n$ denote the number of copies of $H$ in $G(n,p)$.
The sequence $X_n$ satisfies the  LIL, namely

$$\Pr \left[\limsup_{n\rightarrow \infty}
\frac{X_n  -\mu_n }{\sigma_n \sqrt{2\log\log n}}=1\right]=1.$$
\end{theorem}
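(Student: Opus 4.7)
The plan is to reduce the LIL for $X_n$ to the classical Kolmogorov--Khinchin LIL for an iid sum via a Hoeffding (ANOVA) decomposition. Writing $\eta_e := \xi_e - p$ and expanding $\prod_{e \in E(T)} \xi_e = \prod_{e \in E(T)}(p+\eta_e)$ for each copy $T$ of $H$ in $K_n$, one has
$$X_n - \mu_n = L_n + R_n,$$
where $L_n$ collects the terms linear in the $\eta_e$ and $R_n$ those of degree at least two. By the symmetry of $K_n$, every edge $e \subset [n]$ lies in the same number $\alpha_n = \Theta(n^{v-2})$ of copies of $H$, where $v := |V(H)|$. Hence
$$L_n = \alpha_n\, p^{|E(H)|-1}\, S_n, \qquad S_n := \sum_{e \subset [n]} \eta_e,$$
is proportional to a partial sum of $\binom{n}{2}$ iid centered Bernoulli variables.

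A direct second moment computation, classifying ordered pairs of copies of $H$ by the edge-subgraph they share, shows $\sigma_n^2 \sim c_H\, n^{2v-2}$ with the dominant contribution coming from pairs sharing exactly one edge. This matches $\mathrm{Var}(L_n) = \alpha_n^2\, p^{2|E(H)|-2} \binom{n}{2} p(1-p)$ to leading order, while shared subgraphs of two or more edges contribute only $O(n^{2v-3})$ to $\sigma_n^2$. In particular $\mathrm{Var}(L_n)/\sigma_n^2 \to 1$ and $\mathrm{Var}(R_n) = O(n^{2v-3})$. Applying the classical LIL to $S_n$ and noting $\log\log\binom{n}{2} = \log\log n + O(1)$ yields
$$\limsup_{n\to\infty} \frac{L_n}{\sigma_n \sqrt{2 \log\log n}} = 1 \quad \text{a.s.},$$
so it remains to establish $\limsup_n |R_n|/(\sigma_n \sqrt{\log\log n}) = 0$ almost surely.

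This almost-sure negligibility of $R_n$ is the main obstacle. Since $R_n$ is a polynomial of bounded degree $|E(H)|$ in the independent Bernoullis $\{\xi_e\}$, it satisfies a Kim--Vu / hypercontractive exponential tail bound of the form $\Pr[|R_n| > t\sqrt{\mathrm{Var}(R_n)}] \leq \exp(-c\, t^{2/|E(H)|})$. Combined with the polynomial variance gap $\mathrm{Var}(R_n)/\sigma_n^2 = O(1/n)$, a Borel--Cantelli argument along the dyadic subsequence $n_k := 2^k$ gives $R_{n_k} = o(\sigma_{n_k} \sqrt{\log\log n_k})$ almost surely. To transfer this bound from the subsequence to all $n$, we control the oscillations $\max_{n_k \leq n \leq n_{k+1}} |X_n - X_{n_k} - (\mu_n - \mu_{n_k})|$ using the deterministic bound $O(n^{v-1})$ on the number of copies of $H$ created by adding one new vertex, together with the same concentration tools applied to the block increments. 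Combining the LIL for $L_n$ with the a.s.\ bound on $R_n$ completes the proof.
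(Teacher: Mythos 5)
Your argument is essentially correct, but it proceeds along a genuinely different route from the paper. The paper never decomposes $X_n$: it treats $X_n-\mu_n$ and the increments $X_{n,m}$ as sums of dependent indicators, obtains Gaussian-type tail estimates from Rinott's normal approximation theorem for dependency graphs, proves the upper bound along a geometric subsequence $a^k$ together with a L\'evy-type ``close the gaps'' maximal inequality for dependent increments (Claim 3.1), and proves the lower bound by a blocking argument: the copies lying entirely inside the fresh vertex block $\{a^k+1,\dots,a^{k+1}\}$ are independent across $k$, so the second Borel--Cantelli lemma applies, and the crossing copies and the old block are controlled by variance comparisons for large $a$. Your Hoeffding/ANOVA projection instead reduces both bounds at once to the classical Kolmogorov--Khinchin LIL for the iid edge sum $S_n$, since $\mathrm{Var}(L_n)/\sigma_n^2\to 1$ (your orthogonality computation is right: degree-$\ge 2$ projections live on subgraphs spanning at least $3$ vertices, so $\mathrm{Var}(R_n)=O(n^{2v-3})$ while $\sigma_n^2=\Theta(n^{2v-2})$, which is exactly the paper's dominant one-shared-edge term), and then kills $R_n$ by hypercontractive tails for bounded-degree polynomials. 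This buys a shorter proof that avoids Rinott's theorem and the blocking construction entirely; the paper's tail-estimate-plus-blocking machinery is heavier but more portable to situations where an explicit orthogonal decomposition with a dominant linear part is unavailable (indeed it is the template reused for the harder theorems on matchings and Hamilton cycles). Two points in your write-up deserve care, though neither is fatal. First, the classical LIL is stated for all partial sums, while $S_n$ visits only the indices $N=\binom{n}{2}$; you should add the one-line bridge that for $\binom{n}{2}\le N<\binom{n+1}{2}$ the partial sums differ by at most $n=o(\sqrt{N\log\log N})$, so the limsup along this subsequence is still $1$. Second, your final transfer step is both unnecessary and, as phrased, shaky: with $\mathrm{Var}(R_n)=O(\sigma_n^2/n)$ the hypercontractive bound at threshold $\varepsilon\sigma_n\sqrt{\log\log n}$ gives tail probabilities $\exp\bigl(-c_\varepsilon n^{1/|E(H)|}\bigr)$, which are summable over \emph{all} $n$, so Borel--Cantelli yields $R_n=o(\sigma_n\sqrt{\log\log n})$ a.s.\ directly, with no dyadic subsequence needed; by contrast, the deterministic bound $O(n^{v-1})$ per added vertex accumulated over a dyadic block only gives $O(n^{v})$, which is far too large to close the gap on its own, so do not lean on it.
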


The key ingredient in the
 proof of Theorem \ref{main1} is  to overcome the fact that the terms in $X_n$ are not completely independent.

Second, we consider the case where $X_n$ is the number of perfect matchings in $B(n,p)$. In this case, we obtain a LIL for the random variable $\log X_n$.

\begin{theorem}\label{thm:main}Let $X_n$ be the  number of perfect matchings in  $B(n,p)$ and set $Y_n := \log X_n$. Then the sequence $Y_n$ satisfies the LIL, namely
\begin{align}\label{eqtn main thm}
\Pr  \left[\lim\sup_{n\rightarrow \infty}\frac{ Y_n- \log (n! p^n ) + \frac{1-p}{2p}  }{ \sqrt{2\log \log n}  \sqrt { \frac{1-p}{p}} }=1\right]=1
\end{align}
\end{theorem}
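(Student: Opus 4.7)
The plan is to reduce the LIL for $Y_n=\log X_n$ to the classical Kolmogorov--Khinchin LIL applied to an explicit iid sum. Set $Z_{ij}:=(\mathbbm{1}[\{a_i,b_j\}\in B(\BN,p)]-p)/p$, so the $Z_{ij}$ are iid, centered, with variance $\sigma^2:=(1-p)/p$. Define the linear statistic
$$L_n := \frac{1}{n}\sum_{i,j=1}^n Z_{ij},$$
which arises as the first-order Hoeffding component in
$$\frac{X_n}{\mathbb{E} X_n} \;=\; \frac{1}{n!}\sum_{\pi\in S_n}\prod_{i=1}^n(1+Z_{i,\pi(i)}) \;=\; 1 + L_n + Q_n,$$
with $Q_n$ orthogonal to $L_n$. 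Writing $L_n = n^{-1}\sum_{k\leq n^2}\xi_k$ for iid $\xi_k$ of variance $\sigma^2$ and applying the classical LIL along the subsequence $m=n^2$ (for which $m_{n+1}/m_n\to 1$, which suffices for the LIL to be attained along the subsequence), together with $\log\log n^2 = \log\log n + O(1)$, gives
$$\limsup_{n\to\infty}\frac{L_n}{\sqrt{2\log\log n}} \;=\; \sigma \;=\; \sqrt{\frac{1-p}{p}}\qquad\text{a.s.}$$

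Given this, the theorem reduces to showing that the remainder
$$R_n \;:=\; Y_n - \log(n!p^n) + \frac{1-p}{2p} - L_n$$
tends to $0$ almost surely. The shift $-(1-p)/(2p)$ equals $-\mathbb{E}[L_n^2]/2$ and is precisely the expected second-order Taylor correction in $\log(1+L_n+Q_n)$. The plan is to establish a large deviation bound of the form
$$\Pr\bigl[|R_n|>\epsilon\bigr]\;\leq\; C_\epsilon\, n^{-1-\delta}\qquad\text{for every fixed }\epsilon>0,$$
from which Borel--Cantelli gives $R_n\to 0$ a.s.\ and combining with the LIL for $L_n$ yields Theorem \ref{thm:main}.

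The main obstacle, and the real content of the proof, is this large deviation bound; it is the new ingredient highlighted in the abstract. The difficulty is that $L_n$ has $\Theta(1)$ fluctuations, so the naive Taylor expansion $\log(1+L_n+Q_n) = (L_n+Q_n) - \tfrac12(L_n+Q_n)^2+\cdots$ is not term-by-term convergent on an event of overwhelming probability, and the naive remainder is itself of order $1$. My plan is to work instead with the multilinear structure of the permanent directly: first show $X_n/\mathbb{E} X_n$ is bounded away from $0$ with probability $1-n^{-\omega(1)}$ via a second-moment/anti-concentration argument, then condition on the value of $L_n$ and build an edge-exposure Doob martingale for $\log X_n - L_n$ (rather than for $\log X_n$ itself). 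Because each edge contributes its ``main'' effect to the $L_n$ piece, the corresponding martingale increments for $\log X_n - L_n$ should be $o(1/n)$ in a polynomial sense; a Bernstein/Freedman concentration inequality for this martingale then delivers the polynomially-small tail bound on $|R_n|$ required for Borel--Cantelli, completing the proof.
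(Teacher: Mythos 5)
Your overall reduction is the same as the paper's: your $L_n$ is exactly $\sqrt{(1-p)/p}\,E_n^*$, the normalized edge count, so the plan "classical LIL for the edge statistic plus a comparison between $\log X_n$ and that statistic" is precisely the strategy used in Section \ref{perfect matchings}. The genuine gap is in the comparison step, which is the actual content of the theorem, and your sketch of it does not go through as written. First, the claim that $X_n/\mathbb{E}[X_n]$ is bounded away from $0$ (by a fixed constant) with probability $1-n^{-\omega(1)}$ is false: by Janson's log-normal limit law (Theorem 1.2 of the paper), $\log X_n-\log\mathbb{E}[X_n]$ has non-degenerate Gaussian fluctuations of constant order, so for any fixed $\delta\in(0,1)$ the event $X_n<\delta\,\mathbb{E}[X_n]$ has probability bounded away from $0$; a second-moment/Paley--Zygmund argument only yields constant success probability, not $1-n^{-\omega(1)}$. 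Second, the key assertion that the edge-exposure martingale increments of $\log X_n-L_n$ are "$o(1/n)$ in a polynomial sense" is exactly the hard part and is only hoped for, not argued; controlling the conditional fraction of perfect matchings through a given edge uniformly enough for Freedman's inequality is at least as difficult as the concentration estimates the paper develops. Finally, the target bound $\Pr[|R_n|>\epsilon]\le C_\epsilon n^{-1-\delta}$, i.e.\ two-sided almost-sure convergence $R_n\to0$ with polynomial tails, is substantially stronger than anything known or needed: Janson's proof gives only $R_n\to 0$ in probability, and the paper explicitly does not establish such a.s.\ control (it even remarks that $\mathbb{E}[Y_n]$ and $\mathrm{Var}(Y_n)$ are unknown).

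The paper avoids all of this by proving only one-sided comparisons with an additive $O(1)$ slack, which is enough because constants are eventually dominated by $\varepsilon\sqrt{2\log\log n}$. For the upper bound it conditions on the edge count $E_n=m$ (note this is essentially your "condition on $L_n$" step, after which edges are no longer independent, so one must work in $B(n,m)$) and uses the new $k$-th moment estimate of Lemma \ref{conc0} (proved via Bregman's inequality) to get $X_{n,m}\le K\,\mathbb{E}[X_{n,m}]$ with probability $1-n^{-4}$; combined with the expansion of $\log\mathbb{E}[X_{n,m}]$ in terms of $E_n^*$, this yields the one-sided inequality \eqref{comparison}, whose failure probability $O(n^{-2})$ is summable over all $n$. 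For the lower bound it quotes the estimate from the proof of Janson's theorem, namely that $E_n^*$ exceeds the normalized $\log X_n$ expression by more than a constant $C$ only with probability $O(1/n)$; this is not summable over all $n$, but it is summable along a geometric subsequence, which is all the "infinitely often" direction requires. If you want to salvage your route, you should either weaken your goal to these one-sided, constant-slack comparisons (and then you will need a substitute for Lemma \ref{conc0} on the upper side and an input like Janson's estimate on the lower side), or accept that proving $R_n\to0$ a.s.\ with polynomial tails is a new and harder theorem than the one being asked for.
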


Third, we consider  the number of Hamilton cycles in $G(n,p)$ and prove

\begin{theorem}
  \label{thm:main2} Let $X_n$ be the number of Hamiltonian cycles in $G(n,p)$ and set $Y_n := \log X_n$.
  The sequence $Y_n$ satisfies the LIL, namely
$$\Pr \left[\limsup_{n\rightarrow \infty}
\frac{Y_n  -  \log \left(\frac{(n-1)!}{2}p^n\right)+\frac{1-p}{p} }{\sqrt{\frac{2(1-p)}{p}}\sqrt{2\log\log {n}}}=1\right]=1.$$
\end{theorem}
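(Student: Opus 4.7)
The plan is to follow the strategy used to establish Theorem~\ref{thm:main} for perfect matchings in $B(n,p)$, replacing the combinatorial estimates by ones tailored to Hamilton cycles in $G(n,p)$. Two main ingredients are needed: the log-normal CLT of Janson (quoted as Theorem 1.3 above), which identifies the fluctuation scale of $Y_n$ as $\sigma^2 := 2(1-p)/p$ around $\mu_n := \log \mathbb{E}[X_n] - (1-p)/p$; and a quantitative large deviation bound, promised in the abstract as the main new ingredient, which upgrades that CLT to sub-Gaussian tails $\Pr[|Y_n - \mu_n| \ge t\sigma ] \le 2\exp(-(1-o(1))t^2/2)$ throughout the LIL range $t \asymp \sqrt{\log \log n}$.

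The first step is to derive a decomposition $Y_n = \mu_n + L_n + E_n$, where $L_n$ is a linear functional of the edge indicators of $G(n,p)$ with variance asymptotic to $\sigma^2$, and $E_n$ is an error term that, by a switching/second moment analysis in the spirit of Janson and Gao, satisfies $|E_n| = o(\sqrt{\log\log n})$ outside an event whose probability is summable along the subsequences used below. The linear part $L_n$ automatically inherits sub-Gaussian concentration from Hoeffding's inequality applied to independent edges, so it is this decomposition that yields the required large deviation estimate.

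For the upper bound $\limsup \le 1$, I would apply the first Borel--Cantelli lemma along a geometric subsequence $n_k = \lfloor \alpha^k \rfloor$ for $\alpha > 1$, with the tail bound applied at level $(1+\varepsilon)\sigma\sqrt{2\log\log n_k}$; this is summable since each term is $\ll k^{-(1+\varepsilon)^2}$. Interpolation between $n_k$ and $n_{k+1}$ is handled by a uniform oscillation bound on $\max_{n_k \le n \le n_{k+1}}|L_n - L_{n_k}|$, which is small because the variance ratio $\sigma_{n_{k+1}}^2/\sigma_{n_k}^2$ is bounded and $E_n$ is controlled by Step 1. For the lower bound $\limsup \ge 1$, I would pick a rapidly growing subsequence $m_j$ (for instance $m_{j+1} = m_j^K$ with $K$ large) so that the increments $L_{m_{j+1}} - L_{m_j}$ are dominated by edge contributions from $[m_{j+1}]\setminus [m_j]$; they are therefore nearly independent across $j$ and nearly Gaussian with variance $(1-o(1))\sigma^2$. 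The second Borel--Cantelli lemma then delivers deviations of size $(1-\varepsilon)\sigma\sqrt{2\log\log m_j}$ infinitely often, which by the first step transfers from $L_{m_j}$ back to $Y_{m_j}$.

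The hardest step is the first one: obtaining the large deviation inequality in the full LIL range. For Hamilton cycles in the dense regime, $X_n$ is a highly non-linear functional of the edges, being a sum over $(n-1)!/2$ correlated cycle indicators, and a naive vertex-exposure Azuma inequality is too crude at scale $\sqrt{\log\log n}$. Following the perfect matchings argument, the main obstacle will be to combine the edge-exposure martingale with a truncation/switching argument that bounds the contribution of atypical configurations (e.g.\ vertices of abnormally small degree, or short cycles creating extreme correlations) and reduces the analysis to the linear piece $L_n$ where sharp sub-Gaussian bounds are available.
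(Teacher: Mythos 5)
Your high-level strategy is the right one and matches the paper in spirit: reduce $Y_n=\log X_n$ to the linear edge statistic $E_n^*$ and let the classical Khinchin--Kolmogorov LIL for sums of iid edge indicators do the work. But the proposal has a genuine gap at exactly the point you flag as hardest: the decomposition $Y_n=\mu_n+L_n+E_n$ with $|E_n|=o(\sqrt{\log\log n})$ and sub-Gaussian tails for $Y_n-\mu_n$ in the range $t\asymp\sqrt{\log\log n}$ is asserted, not proved, and the suggested route (edge-exposure martingale plus truncation/switching) is not shown to deliver it; indeed no such two-sided statement is established in the paper either. What the paper actually proves is weaker and differently structured: (i) for the upper direction, it conditions on the number of edges $E_n=m$ and proves a new high-moment bound in the $G(n,m)$ model, $\mathbb{E}[X_{n,m}^k]\le C^k(\mathbb{E}[X_{n,m}])^k$ for $k\le \frac{\log n}{8}$ (Lemma \ref{conc1}, via a combinatorial count of $k$-tuples of Hamilton cycles weighted by edge overlap, with an inclusion--exclusion estimate for completions avoiding used edges), whence $X_{n,m}\le K\,\mathbb{E}[X_{n,m}]$ except with probability $n^{-4}$; combined with the asymptotics of $\mathbb{E}[X_{n,m}]$ and the expansion of $\log(E_n/\binom{n}{2})$, this gives the one-sided comparison $\frac{Y_n-\log\mathbb{E}[X_n]+\frac{1-p}{p}}{\sqrt{2(1-p)/p}}\le E_n^*+O(1)$ with summable failure probability; (ii) for the lower direction it proves nothing new, but quotes the proof of Janson's theorem, which gives $\Pr\bigl[E_n^*-\frac{Y_n-\log\mathbb{E}[X_n]+\frac{1-p}{p}}{\sqrt{2(1-p)/p}}>C\bigr]=O(1/n)$. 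Note this lower-side error probability is \emph{not} summable over all $n$; the paper gets around this because the LIL lower bound only needs infinitely many $n$, so it restricts to a geometric subsequence $n_k=c^k$ where $O(c^{-k})$ is summable. Your claim of a two-sided control with summability "along the subsequences used below" is therefore salvageable only if organized this way, and your characterization of the paper's advertised large deviation bound as a sub-Gaussian tail for $Y_n$ at scale $\sqrt{\log\log n}$ is a mischaracterization: it is an upper-tail bound $\Pr[X_{n,m}\ge K\mathbb{E}[X_{n,m}]]\le n^{-4}$ at a constant multiple of the mean, in the uniform model.

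A secondary point: once the comparison to $E_n^*$ is in hand, the blocking machinery you propose (geometric subsequence with oscillation control for the upper bound, rapidly growing $m_{j+1}=m_j^K$ with near-independent increments for the lower bound) is unnecessary duplication. Since $E_n^*$ is the normalized partial sum of the iid edge indicators of $G(\BN,p)$, the classical LIL applies to it directly, and the paper simply transfers: the a.s.\ eventual bound $E_n^*\le(1+\varepsilon/2)\sqrt{2\log\log n}$ absorbs the $O(1)$ error for the upper bound, and $E_{n_k}^*\ge(1-\varepsilon/2)\sqrt{2\log\log n_k}$ infinitely often along $n_k=c^k$ gives the lower bound. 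So the missing content in your proposal is precisely the concentration input --- the moment bound for Hamilton cycle counts in $G(n,m)$ (or an equivalent) and the appeal to Janson for the lower-side comparison --- without which the remaining steps, however standard, do not constitute a proof.
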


The proofs of the last two theorems are more involved.
Our new key ingredient is a large deviation bound
on $X_n$ (the number of perfect matchings or Hamiltonian cycles, respectively), which  appears to be new and could be of independent interest.

\begin{remark} Note that we did not write $(Y_n-\mathbb{E}[Y_n])/\sqrt{Var(Y_n)}$ in theorems \ref{thm:main} and \ref{thm:main2},  as the expected value and variance of $Y_n$ are unknown.  We conjecture that
	constants used in the theorem are good approximations of these quantities.
\end{remark}

Next, we consider the case of $k$-uniform random hypergraphs. In this setting, the CLT and the LIL for the number of copies of a fixed subhypergraph can be obtained in a similar way to the graph case. Therefore, we  focus on global  structures,  Hamiltonian cycles in particular.

To start, there are many ways to define a  cycle in a hypergraph.  We work with the following: an $\ell$-overlapping Hamilton cycle is a cyclic ordering of the vertices $v_1,\ldots,v_n$
for which the edges consisting  of $k$ consecutive vertices and two
consecutive edges overlap in exactly $\ell$ vertices. The case $\ell=1$ is known as a ``loose Hamilton cycle" and the case $\ell=k-1$ is known as  ``tight Hamilton cycle" (note that the case $\ell=0$ corresponds to a perfect matchings).
 Our next result works for all $\ell$, but for the sake  of presentation we state it for loose Hamilton cycles (which from now on will be referred to as Hamilton cycles).

Let $X_{n}(k) $ denote the number of Hamilton cycles in $H^k(n,p)$  with mean $\mu_{n}(k)$ and variance $\sigma_{n}(k) ^2$.
We have found out, somewhat surprisingly, that   for $k \ge 3$, $X_n(k) $  themselves satisfy
the CLT, as opposed to the case $k=2$ where $\log X_n(2)$ satisfies the CLT. The reason  lies in the fact that unlike the case $k=2$, for $k\geq 3$, if we choose a few Hamilton cycles at random, it is very unlikely for them to have common edges and therefore the variance
of the counting function is much smaller compared to $\mu_n(k)^k$.
A similar  observation has been used by Dudek and Frieze in \cite{DF} and \cite{DF1} where they determined the threshold behavior of $\ell$ Hamilton cycles.

\begin{theorem}\label{main:CLT}
For any $ k \ge 3$, the  sequence $ X_{n}(k) $ satisfies the CLT, namely

$$\frac{ X_{n} (k)  -  \mu_{n} (k) } {\sigma_{n} (k)   } \longrightarrow N(0,1). $$

\end{theorem}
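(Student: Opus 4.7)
The plan is to represent $X_n(k)$ as a sum of indicator variables and apply the method of moments. Specifically, write $X_n(k) = \sum_{C \in \mathcal{C}_n} I_C$, where $\mathcal{C}_n$ is the set of loose Hamilton cycles in the complete $k$-uniform hypergraph on $[n]$ and $I_C$ is the indicator that every edge of $C$ belongs to $H^k(n,p)$. Each cycle has $m := n/(k-1)$ edges, so $\mathbb{E}I_C = p^m$ and $\mu_n(k) = |\mathcal{C}_n|\,p^m$. I will show that $(X_n(k)-\mu_n(k))/\sigma_n(k)\to N(0,1)$ by proving that all centered moments converge to Gaussian moments, which requires first determining the order of $\sigma_n(k)^2$.

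The first main step is the variance. Since every $I_C$ is $\{0,1\}$-valued,
\[
\sigma_n(k)^2 \;=\; |\mathcal{C}_n|\,p^m(1-p^m) \;+\; \sum_{j=1}^{m-1} N_j\, p^{2m-j}(1-p^j),
\]
where $N_j$ counts ordered pairs of distinct Hamilton cycles sharing exactly $j$ edges. The diagonal piece contributes $(1-o(1))\mu_n(k)$. The decisive claim is that for $k\geq 3$ the off-diagonal sum is $o(\mu_n(k))$, so $\sigma_n(k)^2 \sim \mu_n(k)\to\infty$. This is the hypergraph incarnation of the Dudek--Frieze heuristic mentioned in the introduction: each shared edge pins down $k\geq 3$ vertices inside both cycles, so heavy overlaps cost many vertex choices and the $p^{-j}$ penalty is overwhelmed by the combinatorial saving. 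When $k=2$ this saving disappears, which is exactly why $X_n(2)$ itself does not concentrate on the $\sqrt{\mu_n}$ scale and one must pass to $\log X_n(2)$ in Theorem \ref{thm:main2}.

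With $\sigma_n(k)^2 \sim \mu_n(k)$ in hand, for each fixed integer $r \geq 1$ expand
\[
\mathbb{E}\!\left[(X_n(k) - \mu_n(k))^r\right] \;=\; \sum_{C_1,\ldots,C_r} \mathbb{E} \prod_{i=1}^{r} \bigl( I_{C_i} - p^m \bigr),
\]
and group each $r$-tuple according to the partition whose blocks are the connected components of the dependency graph on $\{C_1,\ldots,C_r\}$ in which $C_i\sim C_j$ iff they share at least one hyperedge. Singleton blocks contribute zero by centering. Iterating the variance overlap bound shows that blocks of size $\geq 3$, and size-$2$ blocks in which the two cycles share a proper nonempty subset of edges, contribute only lower-order terms. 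The dominant contribution therefore comes from perfect matchings on $[r]$ with matched cycles \emph{coinciding}, giving exactly the $r$-th moment of $N(0,\sigma_n(k)^2)$ and hence the CLT.

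The chief technical obstacle is the overlap enumeration behind the variance bound and its higher-moment iterates: for each $j$ and each combinatorial ``shape'' of a sub-hypergraph of $j$ edges that can appear inside some loose Hamilton cycle, one must count extensions to an ordered pair of full Hamilton cycles and verify that the resulting bound times $p^{-j}$ is $o(|\mathcal{C}_n|)$. I would handle this by decomposing the shared sub-hypergraph into its maximal loose sub-paths and loose sub-cycles, then counting completions by case analysis on the number and lengths of these pieces, exploiting $k\geq 3$ precisely to ensure that the vertex saving per shared edge strictly beats the probability factor $p^{-1}$.
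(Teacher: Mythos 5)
There is a genuine error at the heart of your plan: the claim that $\sigma_n(k)^2 \sim \mu_n(k)$, i.e.\ that the off-diagonal covariance sum is $o(\mu_n(k))$, is false. Ordered pairs of distinct loose Hamilton cycles sharing exactly one hyperedge number about $N^2\, m^2/\binom{n}{k}$ (with $N=|\mathcal{C}_n|$, $m=n/(k-1)$), and each such pair contributes $p^{2m-1}(1-p)$ to the variance, so the $j=1$ term alone is of order $\mu_n(k)^2\, n^{2-k}\,\frac{1-p}{p}$. Since $\mu_n(k)=Np^m$ grows super-exponentially in $n$ while $n^{2-k}$ decays only polynomially, this term is vastly larger than the diagonal contribution $\mu_n(k)$; what $k\ge 3$ buys is that the variance is $o(\mu_n(k)^2)$ (relative concentration), not that it collapses to the Poisson-like scale $\mu_n(k)$. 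With the wrong normalization, the rest of the argument breaks down as well: in the centered $r$-th moment the dominant size-$2$ blocks are pairs of \emph{distinct} cycles sharing a single hyperedge, not coinciding cycles, so the bookkeeping you propose would not isolate the Gaussian moments, and the statement you would end up proving, namely that $(X_n-\mu_n)/\sqrt{\mu_n}$ is asymptotically standard normal, is not true (that ratio diverges).

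The correct mechanism, and the route the paper takes, is that the fluctuations of $X_n(k)$ are asymptotically driven entirely by the fluctuations of the edge count $E_n$ of $H^k(n,p)$: one computes $\mathrm{Cov}(X_n,E_n)=\mathbb{E}[X_n]\,m(1-p)$ and bounds $\mathrm{Var}(X_n)=(\mathbb{E}[X_n])^2 f(n)$ with $f(n)\le \frac{m^2}{\binom{n}{k}}\frac{1-p}{p}\bigl(1+O(n^{2-k})\bigr)$, the overlap estimates on $\alpha_t$ (fraction of cycles meeting a fixed cycle in $t$ edges) showing that $t\ge 2$ overlaps are negligible \emph{relative to the $t=1$ term}. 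This gives $\mathrm{Cov}(X_n^*,E_n^*)=1-O(n^{2-k})$, hence $\mathbb{E}\bigl[|X_n^*-E_n^*|^2\bigr]=O(n^{2-k})\to 0$ for $k\ge 3$, and the CLT transfers from the binomial variable $E_n^*$ with no method of moments needed. Your overlap-enumeration instincts are the right ingredient, but they must be deployed to compare the $t\ge2$ overlap terms against the $t=1$ term (equivalently, to prove the near-perfect correlation with $E_n^*$), not to argue that all overlaps are negligible against $\mu_n(k)$.
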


Finally, we  show that  for $k \ge 4$, the sequence $X_{n} (k) $  satisfies a LIL.

\begin{theorem}\label{main:ILL}
For $k \ge 4$,  the sequence $X_{n} (k) $ satisfies the LIL, namely

$$\Pr \left[\limsup_{n\rightarrow \infty} \frac{X_{n} (k) - \mu_{n} (k) }{\sigma_{n} (k) \sqrt {2\log\log n}}=1\right]=1.$$
\end{theorem}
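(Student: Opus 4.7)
The plan is to follow the classical subsequence template for the LIL, using the CLT of Theorem~\ref{main:CLT} and strengthening it to a quantitative moderate deviation estimate. The first task is to show that for $k \ge 4$ and $t$ in a range that includes $t \asymp \sqrt{2\log\log n}$,
$$\Pr\bigl[\,|X_n(k) - \mu_n(k)| \ge t\,\sigma_n(k)\,\bigr] = \exp\!\bigl(-\tfrac{(1+o(1))\,t^2}{2}\bigr),$$
with a matching two-sided estimate. Since $X_n(k)$ is a polynomial in the i.i.d.\ edge indicators whose monomials (Hamilton cycles) are pairwise edge-disjoint with probability $1-o(1)$ when $k\ge 4$, polynomial (Kim--Vu type) concentration should supply the upper tail in this moderate deviation window, while the lower tail follows by tracking the error terms in the Gaussian approximation already present in the proof of Theorem~\ref{main:CLT}.

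For the \emph{upper bound} $\limsup \le 1$ a.s., fix $\varepsilon>0$ and take a geometric subsequence $n_j := \lfloor(1+\delta)^j\rfloor$ with $\delta$ small. Applying the bound above with $t = (1+\varepsilon)\sqrt{2\log\log n_j}$ yields probabilities bounded by $C j^{-(1+\varepsilon)^2}$, which are summable, so by the first Borel--Cantelli lemma
$$X_{n_j}(k) - \mu_{n_j}(k) \le (1+\varepsilon)\sigma_{n_j}(k)\sqrt{2\log\log n_j}$$
eventually a.s. To interpolate between consecutive $n_j$, I would control $\max_{n_j \le n < n_{j+1}} |X_n(k) - X_{n_j}(k)|$ by a first-moment estimate on the number of Hamilton cycles in $H^k(n_{j+1},p)$ that use at least one vertex in $(n_j, n_{j+1}]$; the resulting bound is $O(\delta)\cdot\sigma_{n_j}(k)\sqrt{\log\log n_j}$, and can be absorbed into $\varepsilon$ by sending $\delta\to 0$.

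For the \emph{lower bound} $\limsup \ge 1$ a.s., take a much sparser subsequence $m_j$ satisfying $m_{j-1}/m_j \to 0$ (e.g.\ $m_j := \lfloor\exp(j\log j)\rfloor$) and consider the martingale increments
$$\Delta_j := X_{m_j}(k) - \mathbb{E}\bigl[X_{m_j}(k) \,\big|\, \mathcal{F}_{m_{j-1}}\bigr],$$
where $\mathcal{F}_\ell$ is the $\sigma$-algebra generated by the edges inside $[\ell]$. For $k\ge 4$ and $m_{j-1}=o(m_j)$, a typical Hamilton cycle on $[m_j]$ has all but an $o(1)$ fraction of its edges outside $[m_{j-1}]$, so $\mathrm{Var}(\Delta_j) = (1-o(1))\sigma_{m_j}(k)^2$. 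The conditional Gaussian lower tail from the first step then yields
$$\Pr\bigl[\Delta_j \ge (1-\varepsilon)\sigma_{m_j}(k)\sqrt{2\log\log m_j} \,\big|\, \mathcal{F}_{m_{j-1}}\bigr] \ge (\log m_j)^{-(1-\varepsilon/2)},$$
which is non-summable for the chosen $m_j$. The conditional second Borel--Cantelli lemma for martingale differences, combined with the upper bound already established for $\mathbb{E}[X_{m_j}(k)\mid\mathcal{F}_{m_{j-1}}]$, produces the matching lower bound.

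\textbf{Main obstacle.} The crux is the quantitative tail bound in Step~1 together with the variance decoupling $\mathrm{Var}(\Delta_j) \sim \sigma_{m_j}(k)^2$. The restriction $k \ge 4$ enters precisely here: for $k=3$ two random loose Hamilton cycles share edges with non-negligible probability, so the covariance contributions coming from the ``old'' edges inside $[m_{j-1}]$ do not vanish as $m_{j-1}/m_j\to 0$, which breaks both the sub-Gaussian tail at scale $\sqrt{2\log\log n}$ and the asymptotic independence of the increments. For $k\ge 4$ the cycle-overlap structure is sparse enough that both ingredients go through and the classical iid-style argument closes.
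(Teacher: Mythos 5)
The paper's proof is much shorter and rests on one idea your plan never uses: compare $X_n(k)$ with the edge count $E_n$ of $H^k(n,p)$. The CLT proof already gives $\mathbb{E}\bigl[|X_n^*-E_n^*|^2\bigr]=O(n^{2-k})$, so Chebyshev at the level $t=(\varepsilon/2)\sqrt{2\log\log n}$ yields $\Pr\bigl[|X_n^*-E_n^*|\ge t\bigr]=O\bigl(n^{2-k}/\log\log n\bigr)$, which is summable over \emph{all} $n$ exactly when $k\ge 4$; Borel--Cantelli then says $|X_n^*-E_n^*|<(\varepsilon/2)\sqrt{2\log\log n}$ eventually a.s., and the classical Khinchin--Kolmogorov LIL for $E_n^*$ (a normalized sum of i.i.d.\ edge indicators) transfers verbatim to $X_n^*$, giving both the upper and the lower bound at once. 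No subsequences, maximal inequalities, moderate deviations or martingales are needed. This also shows your diagnosis of the restriction $k\ge 4$ is off: cycle overlaps are already negligible for $k=3$ (that is why the CLT holds for all $k\ge 3$); what fails at $k=3$ is only the summability of $n^{2-k}$ over all $n$ in the Borel--Cantelli step.

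Beyond taking a harder route, your plan has concrete gaps. First, the ``closing the gaps'' step is false as stated: a Hamilton cycle of $H^k(n,p)$ is a \emph{spanning} structure, so every cycle counted by $X_n(k)$ with $n_j<n\le n_{j+1}$ uses vertices of $(n_j,n]$, and the first-moment bound you invoke for $\max_n|X_n(k)-X_{n_j}(k)|$ is of order $\mu_n(k)$, which exceeds $\sigma_{n_j}(k)\sqrt{\log\log n_j}$ by a superexponential factor (both $\mu_n(k)$ and $\sigma_n(k)$ grow factorially, so even the ratio $\sigma_{n_{j+1}}/\sigma_{n_j}$ along a geometric subsequence is enormous); the partial-sum interpolation template simply does not apply to this statistic. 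Second, the quantitative tail bound you lean on cannot come from Kim--Vu type polynomial concentration: the polynomial has degree $m=n/(k-1)$, and those inequalities degrade exponentially in the degree, so they give nothing at the scale $\sigma_n\sqrt{\log\log n}$ (such a moderate-deviation bound can be extracted, but only via the $L^2$ comparison with $E_n^*$ above, i.e.\ the idea your plan is missing). Third, the conditional Gaussian lower tail for the increments $\Delta_j$ given $\mathcal{F}_{m_{j-1}}$ is asserted but does not follow from any unconditional statement in your Step 1; proving a conditional CLT/moderate-deviation estimate for a spanning-structure count, together with control of $\mathbb{E}[X_{m_j}(k)\mid\mathcal{F}_{m_{j-1}}]-\mu_{m_j}(k)$, is substantial unaddressed work. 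As written, the proposal does not constitute a proof.
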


We conclude  this section with a few  remarks. First, there are many other CLTs in the random  graphs/hypergraphs   literature, and
it is natural to raise the validity  of the LIL in each situation. We hope that this paper will motivate further research in this direction.

As far as the new results are concerned, we prove them under the condition that $p$ is a fixed constant in $(0,1)$.  Since we work with
a random infinite graph, letting $p$ depend on $n$ (as one usually does for $G(n,p)$) does not make sense.
However, one can still consider the sparse case by modifying the definition.  For instance, one can  say that  the edge  $ij\in \mathbb{N}^2$ appears with probability $p(\max\{i,j\})$, independently,
where  $p(k)$ is a sequence of positive numbers  tending to $0$ with $k$. It is an interesting question to determine  those ranges of densities  for which LIL holds.

For  a  technical reason, the proof of Theorem \ref{main:ILL} requires $k \ge 4$. We leave the case $k=3$ as an open problem.

\vskip2mm

\noindent  {\bf Notation.} Throughout the paper, we assume that $n$ is sufficiently large, whenever needed. All asymptotic notation is used under the assumption that
$n \rightarrow \infty$. We will be using the following notation through the paper:
\begin{itemize}
\item
$K_n$ the complete graph on the vertex set  $[n]=\{1,\ldots,n\}$.
\item $(t)_\ell:=t(t-1)\ldots(t-\ell+1)$.
\item $G(n,m)$ is the random graph chosen uniformly at random from the set of all graphs on vertex set $[n]$ with exactly $m$ edges.
\item $B(n,m)$ is the random graph chosen uniformly at random from the set of all bipartite graphs, with vertex sets of sizes $n$ with exactly $m$ edges.
\item For a random variable $X$, we write $X^*$ for its normalization: $X^*:= (X-\mathbb{E}[X])/(\sqrt{Var(X)})$.
\item For a graph $H$ we define $\mathcal H$ to be the set of all (labeled) copies of $H$ in
the infinite complete graph on vertex set $\mathbb{N}$. For each
$n\in \mathbb{N}$, we define $\mathcal H_n$ to be the subset of
$\mathcal H$, consisting of all copies of $H$ in $K_n$ (that is, all graphs in $\mathcal H$ which are contained in $[n]$).
\item  Given  a
copy $h\in \mathcal H$, we denote by $V(h)$ and $E(h)$ its vertex
set and edge set, respectively.
\item In the special case where $H$ is a
\emph{triangle} (that is, a graph on $3$ vertices $\{x,y,z\}$ where
all the three possible edges $\{xy,yz,zx\}$ appear), we replace
$\mathcal H$ with $\mathcal T$ in all of the previous notation.
\item We assume that an enumeration $\mathcal
H=\{h_1,h_2,\ldots\}$ is fixed so that for every $n\in \mathbb{N}$
we have $\mathcal H_n=\{h_1,\ldots,h_\ell\}$, where $\ell$ is the
number of labeled copies of $H$ in $K_n$. Note that such an
enumeration can be easily obtained by an induction on $n$.
\item Suppose $G$ is a random graph (taken from any arbitrary distribution). To each copy $h\in \mathcal H$, we associate an indicator random variable $\xi^G_h $. Whenever the model $G$ is clear from the context, we simply write $\xi_h$.
\item For a collection $\mathcal S$  of copies of $H $ we have $X_ {\mathcal S}  :=\sum_{h\in
\mathcal S}\xi_h $.
\item Let $\Phi(x)$ denote the cumulative distribution function of the standard gaussian $N(0,1)$:
$$\Phi(x):= \Pr[ N(0,1) \le x] = \frac{1}{\sqrt{2\pi}}\int_{-\infty}^xe^{-t^2/2}dt. $$
\item For an event $\mathcal E$, we denote its complement by $\neg \mathcal E$ (i.e., the event that $\mathcal E$ does not hold).
\end{itemize}

{\bf Organization of the paper.} The rest of the paper is organized
as follows. In Section \ref{sec:tools} we  collect the  tools which
are used for the proof of our main results. In Sections
\ref{sec:upper} and \ref{sec:lower} we prove the upper and lower
bounds for Theorem \ref{main1}, and in Section \ref{sec:calculations}
we explain some inequalities we use during the proof. In Section \ref{perfect matchings} we prove Theorem \ref{thm:main}, and in Section  \ref{sec: main2} we prove Theorem \ref{thm:main2}. Both of these sections are split into two subsections containing the proof of the upper bound and the lower bound, respectively. Section \ref{sec:clt hypergraphs} contains  the proof of \ref{main:CLT}, and Section \ref{sec: ill hypergraph} contains the proof of \ref{main:ILL}. Section \ref{upper tail estimates} contains the new large deviation estimates we need on  perfect matchings and Hamilton cycles. The appendix contains some rather routine, but tedious, calculations and approximations that we use throughout the paper.

\section{Tools}\label{sec:tools}

In this section we introduce the main tools to be used in the proofs
of our results.
%
As a first tool, we present Janson's inequality (see e.g.\
\cite{JansonLuczakRucinski}, Theorem 2.14), which will be used in
order to get lower tail estimates for the number of copies of a
fixed graph $H$ in certain random graphs. We only use it in the model $G(\mathbb N,p)$ where $p$ is a fixed constant. For the convenience of the
reader, we state the inequality tailored for our use later (with respect to the
$\xi^G_h$'s which were previously defined). Before doing so, we need
some notation. Let $m\leq n$ be two positive integers, and let
$\mathcal S:=\mathcal H_n\setminus \mathcal H_m$. Consider the
random variable $\xi^G_{\mathcal S}=\sum_{h\in \mathcal S}\xi^G_h$, let $\mu_{\mathcal S}$ be its
expectation, and let $$\Delta:=\sum_{h,h'\in \mathcal S \text{ s.t }\\ h\cap h'\neq \emptyset}
\mathbb{E}[\xi_h\xi_{h'}].$$ With this notation in hand we are ready to state
the theorem.

\begin{theorem}\label{Janson}
For a fixed graph $H$ and for every $0\leq t\leq \mu_{\mathcal S}$
we have
$$\Pr\left[\xi^G_{\mathcal S}\leq \mu_{\mathcal S}-t\right]\leq
e^{-\frac{t^2}{2\Delta}}.$$
\end{theorem}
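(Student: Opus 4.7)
The statement is the classical Janson lower-tail inequality for a sum of weakly dependent Bernoulli indicators, and the plan is to prove it by the exponential moment (Chernoff) method, combining the exponential Markov bound with an MGF estimate that captures the correlation structure through $\Delta$. Writing $Y := \xi^G_{\mathcal S}$ and $\mu := \mu_{\mathcal S}$ for brevity, the first step is the bound $\Pr[Y \leq \mu - t] \leq e^{\lambda(\mu - t)}\mathbb{E}[e^{-\lambda Y}]$ for any $\lambda \geq 0$. It then suffices to establish the MGF inequality
$$\mathbb{E}[e^{-\lambda Y}] \leq \exp\bigl(-\lambda\mu + \tfrac{1}{2}\lambda^2 \Delta\bigr),$$
since substituting into Markov and optimizing at $\lambda := t/\Delta$ produces the stated bound $e^{-t^2/(2\Delta)}$ (the choice is admissible because $t \leq \mu \leq \Delta$, the latter following from $h=h'$ being counted in the definition of $\Delta$).

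The heart of the argument is the MGF bound. Let $\psi(\lambda) := -\log \mathbb{E}[e^{-\lambda Y}]$, so that $\psi(0)=0$ and $\psi'(\lambda) = \mathbb{E}_\lambda[Y] = \sum_{h\in\mathcal S} \Pr_\lambda[A_h]$, where $\mathbb{P}_\lambda$ is the exponentially tilted measure $d\mathbb{P}_\lambda \propto e^{-\lambda Y}\,d\mathbb{P}$ and $A_h$ is the event that the copy $h$ appears in $G$. It suffices to show $\psi'(\lambda) \geq \mu - \lambda\Delta$ for $\lambda \geq 0$ and integrate. For each $h\in \mathcal S$ I would expand
$$\mathbb{E}[\xi_h\, e^{-\lambda Y}] = e^{-\lambda}\,\mathbb{E}\Bigl[\xi_h \prod_{h'\neq h}e^{-\lambda \xi_{h'}}\Bigr],$$
split the inner product according to whether $h'$ is edge-disjoint from $h$ (in which case $\xi_{h'}$ and $\xi_h$ are independent in the product measure) or $h'\sim h$ (correlated), apply Harris's correlation inequality to the monotone factors in the independent part, and use the universal Taylor bound $e^{-\lambda \xi_{h'}} \geq 1 - \lambda \xi_{h'}$ on the correlated factors. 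Normalizing by $\mathbb{E}[e^{-\lambda Y}]$ then yields $\Pr_\lambda[A_h] \geq p_h - \lambda \sum_{h'\sim h,\, h'\neq h} \mathbb{E}[\xi_h \xi_{h'}]$, and summing over $h\in \mathcal S$ gives exactly $\psi'(\lambda)\geq \mu - \lambda \Delta$.

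The main obstacle is this correlation step. A naive application of Harris's inequality to the tilted measure gives only $\Pr_\lambda[A_h] \leq p_h$, which is the \emph{wrong} direction for our purposes; extracting a matching lower bound requires the Taylor-expansion trick to isolate precisely the pair-correlation terms $\mathbb{E}[\xi_h\xi_{h'}]$ that sum to $\Delta$, rather than to a weaker quantity like $\mu^2$. Nailing the constant $1/2$ in the exponent $e^{-t^2/(2\Delta)}$ hinges on this step being sharp; once it is, substituting into the exponential Markov bound and optimizing in $\lambda$ is routine bookkeeping.
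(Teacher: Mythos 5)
The paper does not actually prove this statement: it is quoted as a known tool, namely Janson's inequality as stated in \cite{JansonLuczakRucinski} (Theorem 2.14), so there is no internal proof to compare against. Your proposal is, in architecture, exactly the standard proof of that cited theorem: exponential Markov, the tilted measure, showing $\psi'(\lambda)\ge \mu-\lambda\Delta$ via Harris/FKG on the edge-disjoint block and the bound $e^{-x}\ge 1-x$ on the intersecting block, then integrating and optimizing at $\lambda=t/\Delta$. That route is correct and does deliver the stated bound, with the constant $1/2$, provided $\Delta$ counts all ordered pairs with $h\cap h'\neq\emptyset$ including the diagonal $h=h'$ (which is the paper's convention; see Remark \ref{remJanson}, where $\Delta\le \mu_{\mathcal S}+\cdots$).

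Two details need repair. First, the intermediate bound you state, $\Pr_\lambda[A_h]\ge p_h-\lambda\sum_{h'\sim h,\,h'\neq h}\mathbb{E}[\xi_h\xi_{h'}]$, is false as written: for $\mathcal S=\{h\}$ it would assert $\Pr_\lambda[A_h]\ge p_h$, whereas tilting by $e^{-\lambda Y}$ strictly decreases this probability. The diagonal term $\mathbb{E}[\xi_h^2]=p_h$ must appear on the right-hand side, and it is supplied precisely by the prefactor $e^{-\lambda}\ge 1-\lambda$ that you factored out; the correct per-$h$ bound is $\Pr_\lambda[A_h]\ge p_h-\lambda\sum_{h'\sim h}\mathbb{E}[\xi_h\xi_{h'}]$ with the diagonal included, and summing this gives $\psi'(\lambda)\ge\mu-\lambda\Delta$ for the paper's $\Delta$, which is all you need. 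Second, the Harris step should be performed after conditioning on $\xi_h=1$ (i.e. after fixing the edges of $h$ to be present), since only then are both blocks decreasing functions on the remaining product space; and the Taylor bound is best applied to the whole correlated block at once, $\mathbb{E}\bigl[e^{-\lambda Y_1}\mid \xi_h=1\bigr]\ge 1-\lambda\,\mathbb{E}[Y_1\mid\xi_h=1]$ with $Y_1:=\sum_{h'\sim h}\xi_{h'}$, together with $\mathbb{E}[e^{-\lambda Y_2}]\ge\mathbb{E}[e^{-\lambda Y}]$ for the disjoint block. Doing it factor by factor, as you sketch, forces you to bound the disjoint block by $1$ inside the negative terms, and after normalizing by $\mathbb{E}[e^{-\lambda Y}]\le 1$ those negative terms get inflated (and you would also need $\lambda\le 1$ for the pointwise products to stay nonnegative). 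With these two adjustments your argument is the standard, correct proof of the quoted inequality.
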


\begin{remark} \label{remJanson}
For the special case where $H$ is a triangle, it is easy to show (by
fixing the intersection edge) that $\Delta\leq \mu_{\mathcal
S}+(\binom{m}{2}(n-m)^2+\binom{n-m}{2}n^2+m(n-m)n^2)p^5$. We make
use of this later.
\end{remark}

Another tool to be used in our proofs is the following well known
lemma due to Borel and Cantelli.

\begin{lemma}[Borel-Cantelli Lemma]
Let $(A_i)_{i=1}^{\infty}$ be a sequence of events. Then
\begin{enumerate}[$(a)$]
\item If $\sum_{k}\Pr\left[A_k\right]<\infty$, then
$$\Pr\left[A_k \textrm{ for infinitely many }k\right]=0.$$
\item If $\sum_k \Pr\left[A_k\right]=\infty$ and in addition all
the $A_k$'s are independent, then
$$\Pr\left[A_k \textrm{ for infinitely many }k\right]=1.$$
\end{enumerate}
\end{lemma}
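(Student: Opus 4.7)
The plan is to handle the two parts separately using the standard measure-theoretic encoding of ``infinitely often.'' Write $\{A_k \text{ for infinitely many } k\} = \limsup_k A_k = \bigcap_{n=1}^\infty \bigcup_{k \geq n} A_k$, which is the right object on which to apply countable additivity / continuity of measure and (in part (b)) independence.

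For part (a), I would start from the inclusion $\limsup_k A_k \subseteq \bigcup_{k \geq n} A_k$, valid for every $n$. Union-bound this to get $\Pr[\limsup_k A_k] \leq \sum_{k \geq n} \Pr[A_k]$. Since the full series $\sum_k \Pr[A_k]$ converges, its tails go to zero, so letting $n \to \infty$ forces $\Pr[\limsup_k A_k] = 0$. This step is essentially immediate; no independence is needed.

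For part (b), the natural strategy is to show the complement has probability zero. The complement of $\limsup_k A_k$ is $\liminf_k \neg A_k = \bigcup_{n=1}^\infty \bigcap_{k \geq n} \neg A_k$, so by subadditivity it suffices to show $\Pr\bigl[\bigcap_{k \geq n} \neg A_k\bigr] = 0$ for every fixed $n$. Here is where independence enters: for each $N \geq n$,
\[
\Pr\Bigl[\bigcap_{k=n}^{N} \neg A_k\Bigr] = \prod_{k=n}^{N}(1 - \Pr[A_k]) \leq \prod_{k=n}^{N} e^{-\Pr[A_k]} = \exp\Bigl(-\sum_{k=n}^{N}\Pr[A_k]\Bigr),
\]
using the elementary inequality $1 - x \leq e^{-x}$. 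By continuity of measure (monotone decreasing sequence of events), the left side tends to $\Pr\bigl[\bigcap_{k \geq n} \neg A_k\bigr]$ as $N \to \infty$, while the right side tends to $0$ because $\sum_k \Pr[A_k] = \infty$. Summing over $n$ via countable subadditivity finishes the argument.

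The main (minor) subtlety is ensuring that independence of the sequence $(A_k)$ propagates to independence of the complements $(\neg A_k)$ — this is standard and follows directly from the definition of independence of events — and carefully justifying the passage to the limit $N \to \infty$ via downward continuity of the probability measure. There is no serious obstacle; the result is classical and the whole proof is short, but I would be explicit about these two small points for completeness.
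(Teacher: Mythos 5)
Your proof is correct and complete: part (a) by bounding $\Pr[\limsup_k A_k]$ by the tail sums, and part (b) by applying independence together with $1-x\leq e^{-x}$ and downward continuity of measure to the complements, is exactly the classical argument. The paper states the Borel--Cantelli Lemma as a well-known tool without giving a proof, so there is nothing to compare against; your write-up supplies the standard proof correctly, including the two small points (independence of complements, passage to the limit) that are worth making explicit.
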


The following theorem due to Rinott \cite{Rinott} shows that, under some assumptions, the sum
of dependent random variables satisfies CLT, and measures the error
term based on the dependencies between the variables. Before stating
it explicitly, we need the following definition.

\begin{definition} \label{def:dependencygraph}
Let $(X_i)_{i\in I}$ be a collection of random variables. A graph
$D$ on a vertex set $I$ is called a \emph{dependency graph} for the
collection if for any pair of disjoint subsets $I_1,I_2\subseteq I$
for which there are no edges of $D$ between $I_1$ and $I_2$, the
random variables $(X_i)_{i\in I_1}$ and $(X_j)_{j\in I_2}$ are
independent.
\end{definition}

Now we state the result from
\cite{Rinott} which we are going to use.

\begin{theorem}[Theorem 2.2 in \cite{Rinott}]\label{thm:Rinott}
Let $(t_i)_{i=1}^n$ be a collection of random variables. Let
$X=\sum_{i=1}^n t_i$  and  $\mu:=\mathbb{E}(X)$ and $\sigma^2:=Var(X)>0$.
Let $D$ be a dependency graph for the collection and suppose that
$|t_i-\mathbb{E}(t_i)|\leq B$ a.s.\ for every $i$ and that
$\Delta(D)\leq C$. Then
$$\left|\Pr\left[\frac{X-\mu}{\sigma}\leq x\right]-\Phi(x)\right|\leq \frac
{BC}\sigma \left(\sqrt{\frac 1{2\pi}}+16
\left(\frac{n}{\sigma^2}\right)^{1/2}C^{1/2}B+10\left(\frac{n}{\sigma^2}\right)
CB^2\right).$$
\end{theorem}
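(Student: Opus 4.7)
The plan is to prove the stated Berry--Esseen bound by Stein's method adapted to sums of locally dependent variables. After normalizing via $W=(X-\mu)/\sigma$ and $W_i=(t_i-\mathbb{E}[t_i])/\sigma$, so that $W=\sum_i W_i$ with $\mathbb{E}[W_i]=0$, $|W_i|\le B/\sigma$, and $\mathrm{Var}(W)=1$, I fix $x\in\mathbb{R}$ and introduce the bounded solution $f=f_x$ of Stein's equation $f'(w)-wf(w)=\mathbf{1}_{w\le x}-\Phi(x)$. The classical estimates $\|f\|_\infty\le\sqrt{2\pi}/4$ and $\|f'\|_\infty\le 1$ reduce the problem to controlling the single quantity $\mathbb{E}[f'(W)-Wf(W)]$, which equals $\Pr[W\le x]-\Phi(x)$.

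The central use of the dependency graph is the following decomposition. Let $K_i=\{i\}\cup\{j:ij\in E(D)\}$, $\eta_i=\sum_{j\in K_i}W_j$, and $W^{(i)}=W-\eta_i$. By definition of $D$, the variable $W_i$ is independent of $W^{(i)}$, so $\mathbb{E}[W_if(W^{(i)})]=0$ and therefore
\[
\mathbb{E}[Wf(W)]=\sum_i\mathbb{E}\bigl[W_i\bigl(f(W)-f(W^{(i)})\bigr)\bigr].
\]
Combined with the variance identity $\sum_i\mathbb{E}[W_i\eta_i]=\mathrm{Var}(W)=1$ (which follows because covariances vanish outside $D$), the first-order Taylor expansion $f(W)-f(W^{(i)})\approx\eta_i f'(W)$ yields the working identity
\[
\mathbb{E}[f'(W)-Wf(W)]=\mathbb{E}\Bigl[f'(W)\Bigl(1-\sum_i W_i\eta_i\Bigr)\Bigr]+\mathcal R,
\]
where $\mathcal R$ collects the Taylor remainders.

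The first summand will be handled by Cauchy--Schwarz after computing $\mathrm{Var}(\sum_iW_i\eta_i)$: the covariance of $W_iW_j$ with $W_kW_\ell$ vanishes unless the corresponding closed neighborhoods overlap, so only $O(nC^2)$ summands survive, each of size $O(B^4/\sigma^4)$; this is the source of the middle term $(BC/\sigma)\sqrt{n/\sigma^2}\,C^{1/2}B$. For the remainder $\mathcal R$, the non-smoothness of $f'$ at $x$ rules out a direct $\|f''\|_\infty$ estimate, so the standard remedy is to replace $\mathbf{1}_{w\le x}$ by a Lipschitz approximation $h_{x,\delta}$ of bandwidth $\delta$, apply the same decomposition to the smoother Stein solution (which now satisfies $\|f''\|_\infty=O(1/\delta)$), and undo the smoothing via a concentration inequality of the form $\Pr[|W-x|\le\delta]\le 2\delta/\sqrt{2\pi}+\text{error}$. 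The concentration inequality is itself proven by a parallel Stein argument against a truncated piecewise-linear test function, and produces the cubic $(n/\sigma^2)CB^2$ contribution.

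The main obstacle is executing this smoothing step with enough care to produce the explicit numerical constants $\sqrt{1/(2\pi)}$, $16$ and $10$ appearing in the three summands of the stated bound. In particular, the concentration inequality must be established first and then plugged back into the Taylor estimate for $\mathcal R$; an optimization of the bandwidth $\delta$ at the scale where the concentration error balances the smoothing error is what converts the three intermediate estimates into the three-term form in the theorem.
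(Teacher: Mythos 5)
First, note that the paper does not prove this statement at all: it is quoted verbatim as Theorem 2.2 of Rinott \cite{Rinott} and used as a black box, so there is no in-paper argument to compare against. Your outline follows essentially the route of Rinott's original proof (Stein's method for sums with a dependency graph: the Stein equation for the indicator test function, the local decomposition $W=W^{(i)}+\eta_i$ with $W_i$ independent of $W^{(i)}$, Cauchy--Schwarz on $1-\sum_i W_i\eta_i$, and a smoothing plus concentration-inequality step to handle the non-smooth part), so the strategy is sound and is the right one for this statement.

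However, as written the proposal has a genuine gap: the theorem being claimed is a fully explicit bound, with the constants $\sqrt{1/(2\pi)}$, $16$ and $10$ in front of the three terms, and your argument never produces them --- you yourself flag the smoothing/concentration step and the bandwidth optimization as ``the main obstacle'' and leave them unexecuted. The concentration inequality $\Pr[|W-x|\le\delta]\lesssim \delta$ with a dependency-graph error term is not a routine remark; it requires its own Stein-type argument with a carefully truncated test function, and it is exactly where the cubic term $10(n/\sigma^2)CB^2$ comes from. Without carrying it out, and without the explicit Taylor-remainder bookkeeping, what you have is a correct plan for a bound of the form $O\bigl(\frac{BC}{\sigma}(1+(n/\sigma^2)^{1/2}C^{1/2}B+(n/\sigma^2)CB^2)\bigr)$ rather than a proof of the stated inequality. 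There is also a counting slip in the variance step: the quadruples $(i,j,k,\ell)$ with $j\in K_i$, $\ell\in K_k$ whose covariances survive number $O(nC^3)$, not $O(nC^2)$; with $O(nC^2)$ you would get only $(BC/\sigma)(n/\sigma^2)^{1/2}B$, which does not match the middle term $(BC/\sigma)\cdot 16(n/\sigma^2)^{1/2}C^{1/2}B$ you (correctly) announce. Since the surrounding paper only needs the qualitative form recorded in Remark \ref{rem1}, your sketch captures the right mechanism, but it does not yet prove the theorem as stated.
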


\begin{remark}\label{rem1}
Note that whenever $\sigma^2=\Omega(nCB^2)$ the expression in the
right hand side of the inequality in Theorem \ref{thm:Rinott} is
$O\left(\frac{BC}{\sigma}\right)$. Assuming this, since
$\lim_{x\rightarrow \infty}\frac{\int_x^{\infty}e^{-t^2/2}dt}{\frac
1x e^{-x^2/2}}=1$, for large enough $x$ it follows by Theorem
\ref{thm:Rinott} that
$$\left|\Pr\left[\frac{X-\mu}{\sigma}\geq x\right]-\frac
{1}{x\sqrt{2\pi}} e^{-x^2/2}\right|=
O\left(\frac{BC}{\sigma}\right).$$
\end{remark}

The key tools in the proofs of Theorems  \ref{thm:main} and \ref{thm:main2} are the following concentration bounds, which may be of independent interest. We postpone their proofs to Section \ref{upper tail estimates}.
\begin{lemma}\label{conc0}Let $X_{n,m}$ be the number of perfect matchings in $B(n,m)$. Let $0<\delta<1/2$ be a constant. There is a constant $C$, depending on $\delta$, such that for any $\delta n^2\leq m\leq (1-\delta)n^2$, and $k=o(n^{1/3})$, we have
\[
\mathbb{E}[X_{n,m}^k]\leq C^k(\mathbb{E}[X_{n,m}])^k
\]
\end{lemma}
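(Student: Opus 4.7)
The plan is to compute $\mathbb{E}[X_{n,m}^k]$ directly as a sum over ordered $k$-tuples of perfect matchings in $K_{n,n}$ and then exhibit a precise cancellation between the ``conditioning correction'' in $B(n,m)$ and the moment generating function of the total overlap. Set $p = m/n^2$ and $P(e) = \binom{n^2-e}{m-e}/\binom{n^2}{m}$, the probability that a fixed set of $e$ edges lies in $B(n,m)$; then
$$\mathbb{E}[X_{n,m}^k] \;=\; \sum_{(M_1,\ldots,M_k)} P\bigl(|M_1 \cup \cdots \cup M_k|\bigr).$$
Since $\mathbb{E}[X_{n,m}] = n!\,P(n)$, the lemma reduces to showing that
$$\mathbb{E}_{\mathrm{iid}}\!\left[\frac{P(|M_1 \cup \cdots \cup M_k|)}{P(n)^k}\right] \;\leq\; C^k,$$
where $M_1,\ldots,M_k$ are independent uniformly random perfect matchings.

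The first step is a Taylor expansion. Writing $Z := kn - |M_1 \cup \cdots \cup M_k|$ and expanding $\log(1-x) = -x + O(x^2)$ in the product formula $P(e) = \prod_{i=0}^{e-1}(m-i)/(n^2-i)$, one obtains the uniform estimate
$$\frac{P(kn-j)}{P(n)^k} \;\leq\; t^{\,j}\cdot \exp\!\left(-\binom{k}{2}\frac{1-p}{p}\right) \cdot (1+o(1)), \qquad t := \frac{1}{p}\,e^{(1-p)k/(pn)},$$
valid for every $0 \leq j \leq (k-1)n$ under $k=o(n^{1/3})$. The decisive feature is the \emph{negative} leading correction $-\binom{k}{2}(1-p)/p$, which is the signature of conditioning on $|E|=m$ and is absent in $B(n,p)$; the small positive piece $O(jk/n)$ coming from the expansion is absorbed into the base $t$, while the negative piece $O(j^2/n^2)$ is harmlessly dropped.

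The second step is a matching upper bound for $\mathbb{E}_{\mathrm{iid}}[t^Z]$. Decompose $Z = \sum_{i=2}^{k} |M_i \cap U_{i-1}|$ with $U_{i-1} := M_1 \cup \cdots \cup M_{i-1}$, and condition one matching at a time. For a fixed bipartite graph $U$ with $u$ edges and $M$ a uniform perfect matching of $K_{n,n}$ parametrized by $\pi \in S_n$, expanding $\prod_{(a,b)\in U}(1 + (t-1)\mathbf{1}[\pi(a) = b])$ yields the identity
$$\mathbb{E}_M\bigl[t^{|M \cap U|}\bigr] \;=\; \sum_{s \geq 0} m_s(U)\,\frac{(t-1)^s}{(n)_s},$$
where $m_s(U)$ counts $s$-matchings in $U$. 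Using $m_s(U) \leq \binom{u}{s}$, the bound $(n)_s \geq n^s(1 - s^2/n)$ in the dominant range $s \ll \sqrt{n}$, and a geometric tail estimate for $s \gtrsim u(t-1)/n$, one arrives at $\mathbb{E}_M[t^{|M \cap U|}] \leq C_1\, e^{u(t-1)/n}$ with $C_1 = C_1(\delta)$. Iterating over $i = 2,\ldots,k$ with $|U_{i-1}| \leq (i-1)n$ then gives
$$\mathbb{E}_{\mathrm{iid}}[t^Z] \;\leq\; C_1^{\,k}\exp\!\left(\binom{k}{2}(t-1)\right) \;\leq\; C_1^{\,k}\exp\!\left(\binom{k}{2}\frac{1-p}{p} + O(k^3/n)\right),$$
using $t - 1 = (1-p)/p + O(k/(pn))$.

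Multiplying the two bounds, the factors $\exp(\pm \binom{k}{2}(1-p)/p)$ cancel exactly and one is left with $(2C_1)^k = C^k$, as required. The main obstacle lies in engineering this cancellation: the two leading $O(k^2)$ contributions must match \emph{exactly}, so the Taylor remainders in the first step and the exponent in the conditional MGF bound in the second must both be controlled with errors that are $o(1)$ uniformly. The hypothesis $k = o(n^{1/3})$ enters to keep $k^3/n$ small, to push the dominant summation index $s \asymp u(t-1)/n \lesssim k$ well inside the regime where $(n)_s \approx n^s$, and to justify the absorption $t = (1/p)(1 + O(k/n))$; any improvement to larger $k$ would require correspondingly sharper versions of each of these steps.
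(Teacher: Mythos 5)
Your proposal is correct, and it reaches the lemma by a genuinely different route than the paper. The paper also starts from the expansion $\mathbb{E}[X_{n,m}^k]=\sum_a M(a)\frac{(m)_{kn-a}}{(n^2)_{kn-a}}$, but then bounds $M(a)$ purely combinatorially: it builds the $k$-tuple sequentially over compositions $(\ell_2,\dots,\ell_k)$ of $a$, getting the crude bound $(n!)^k\binom{k}{2}^a/a!$, and for small $a$ it sharpens the completion step via the Bregman--Minc inequality to gain a factor $(1+o(1))e^{-(t-1)}$ per step (hence $e^{-\binom{k}{2}}$ in total), finally splitting the $a$-sum at $T=p_mek^2$. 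In that accounting the $\Theta(k^2)$ terms cancel as $-\tfrac{k^2(1-p_m)}{2p_m}+\tfrac{1}{p_m}\binom{k}{2}-\binom{k}{2}$; your single positive term $\binom{k}{2}(t-1)\approx\binom{k}{2}\tfrac{1-p}{p}$ is exactly the combination of the paper's last two, which is why you can dispense with Bregman--Minc and the good/bad-vertex degree analysis altogether: your per-step quantity is the overlap MGF of an \emph{unconstrained} uniform matching, handled exactly by the identity $\mathbb{E}_M[t^{|M\cap U|}]=\sum_s m_s(U)\frac{(t-1)^s}{(n)_s}$ with $m_s(U)\le\binom{u}{s}$, rather than a count of completions forced to avoid $\cup_{j<t}P_j$, which is where the paper needs permanent estimates. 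What each buys: your argument is softer and cleaner (the constant can even be taken $1+o(1)$ per step), while the paper's combinatorial bound on $M(a)$ is the template it reuses for the Hamilton-cycle analogue (Lemma \ref{conc1}), where an exact generating-function identity of your type is not available and an inclusion-exclusion/Bregman-style count is used instead. Two small points to tighten when writing this up: the cancellation is exact only up to $e^{O(k^3/n)}=e^{o(1)}$ (consistent with your stated error control, but not literally ``exact''), and in bounding $\sum_s\binom{u}{s}\frac{(t-1)^s}{(n)_s}$ you must be careful to keep $(n)_s\ge n^s(1-o(1))$ only in the range $s=O(k)$ and start the tail a constant factor beyond $u(t-1)/n$ (or cut at, say, $s\le n^{2/5}$), since the consecutive-term ratio only drops below $1/2$ somewhat past the mode and $(n)_s$ degrades for $s$ comparable to $n$; both are routine under $k=o(n^{1/3})$, the same hypothesis the paper uses to ensure $kT=o(n)$.
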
Markov's bound implies that for $K\geq C$ one has:
\[
\Pr[X_{n,m}\geq K \mathbb{E}[X_{n,m}]]\leq (C/K)^k
\]by taking $\delta:=\min\{p/2,(1-p)/2\}$, $k=4\log n$, and $K=Ce$, we have the following corollary
\begin{corollary}\label{concentration0}Let $0<p<1$ be a constant. There is a constant $K$ (depending on $p$) such that for any $\frac{p}{2}n^2\leq m\leq \frac{1+p}{2}n^2$ one has
$$
\Pr[X_{n,m}\geq K\mathbb{E}[X_{n,m}]]\leq n^{-4}
$$
\end{corollary}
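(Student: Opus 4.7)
The plan is to deduce Corollary \ref{concentration0} directly from Lemma \ref{conc0} via a high-moment Markov argument. This is essentially the derivation sketched in the paragraph preceding the corollary statement, so my job is just to fill in the parameter choices and verify the hypotheses.

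First I would reduce the corollary's hypothesis to the lemma's hypothesis. Setting $\delta := \min\{p/2, (1-p)/2\}$ gives a positive constant depending only on $p$, and one checks that every $m$ in the range $\frac{p}{2}n^2 \le m \le \frac{1+p}{2}n^2$ satisfies $\delta n^2 \le m \le (1-\delta)n^2$. Therefore Lemma \ref{conc0} yields a constant $C = C(p)$ such that $\mathbb{E}[X_{n,m}^k] \le C^k(\mathbb{E}[X_{n,m}])^k$ for every integer $k = o(n^{1/3})$.

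Next, for an arbitrary threshold $K>0$ and any admissible $k$, I would apply Markov's inequality to the non-negative variable $X_{n,m}^k$:
$$\Pr[X_{n,m} \ge K\,\mathbb{E}[X_{n,m}]] \;=\; \Pr[X_{n,m}^k \ge K^k (\mathbb{E}[X_{n,m}])^k] \;\le\; \frac{\mathbb{E}[X_{n,m}^k]}{K^k (\mathbb{E}[X_{n,m}])^k} \;\le\; \left(\frac{C}{K}\right)^k.$$
Finally, I would optimize by taking $K := Ce$ and $k := \lceil 4\log n \rceil$. Since $\log n = o(n^{1/3})$, Lemma \ref{conc0} applies for this choice of $k$, and the bound becomes $(C/K)^k = e^{-k} \le n^{-4}$, which is exactly the conclusion.

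There is no real obstacle here; all the content lies in Lemma \ref{conc0}, whose proof (presumably the technical heart of Section \ref{upper tail estimates}) is what actually requires work. The only bookkeeping is checking that $\lceil 4\log n\rceil$ fits inside the permissible window $k = o(n^{1/3})$ and that the chosen $\delta$ comfortably contains the $p$-range in the corollary's hypothesis, both of which are immediate.
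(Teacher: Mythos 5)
Your proposal is correct and coincides with the paper's own derivation: the authors also take $\delta=\min\{p/2,(1-p)/2\}$, apply Markov's inequality to the $k$-th moment using Lemma \ref{conc0}, and choose $K=Ce$ and $k=4\log n$ (which indeed satisfies $k=o(n^{1/3})$) to get the bound $e^{-k}\le n^{-4}$. There is nothing to add beyond the bookkeeping you already carried out.
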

\noindent The concentration bounds for Hamilton cycles are as follows
\begin{lemma}\label{conc1} Let $X_{n,m}$ be the number of Hamilton cycles in $G(n,m)$. Let $0<\delta<1/2$ be a constant. There is a constant $C$, depending on $\delta$, such that for any $\delta {n \choose 2}\leq m\leq (1-\delta){n \choose 2}$, and $k\leq \frac{\log n}{8}$ we have:
\[
\mathbb{E}[X_{n,m}^k]\leq C^k(\mathbb{E}[X_{n,m}])^k
\]
\end{lemma}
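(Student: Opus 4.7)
The plan is to mirror the proof structure used for Lemma~\ref{conc0} on perfect matchings, adapted to the combinatorics of Hamilton cycles in $K_n$. I would expand $\mathbb{E}[X_{n,m}^k]$ as a sum over ordered $k$-tuples of Hamilton cycles, use the hypergeometric probability formula, and bound the contributions from tuples with edge overlap via an iterative combinatorial count.

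Set $N=\binom{n}{2}$ and $p=m/N\in[\delta,1-\delta]$. The first step writes
\begin{equation*}
\mathbb{E}[X_{n,m}^k] \;=\; \sum_{(H_1,\ldots,H_k)} \Pr\!\left[\,\bigcup_i E(H_i)\subseteq E(G(n,m))\right] \;=\; \sum_{(H_1,\ldots,H_k)} \frac{(m)_e}{(N)_e},
\end{equation*}
where $e:=|\bigcup_i E(H_i)|$. Parameterizing tuples by the overlap excess $s:=kn-e\geq 0$ and using that $k\leq\log n/8$ gives $kn=o(n^2)$, the ratio factors as $(m)_e/(N)_e = (m)_{kn}/(N)_{kn}\cdot\prod_{j=0}^{s-1}(N-kn+s-j)/(m-kn+s-j)$, which is $p^{-s}$ times controlled corrections. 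Comparing with $\mathbb{E}[X_{n,m}]=\frac{(n-1)!}{2}\cdot\frac{(m)_n}{(N)_n}$, the lemma reduces to the estimate $\sum_s N_s\,p^{-s}\leq C^k\left(\frac{(n-1)!}{2}\right)^k$ (absorbing the factorial-ratio correction), where $N_s$ denotes the number of ordered $k$-tuples of Hamilton cycles with overlap excess $s$.

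The combinatorial core bounds $N_s$ inductively: given $(H_1,\ldots,H_{i-1})$ whose union $G_{i-1}$ has at most $(i-1)n$ edges and maximum degree at most $2(i-1)\leq 2k$, each Hamilton cycle $H_i$ sharing $s_i$ edges with $G_{i-1}$ is specified by (i) a subgraph $F\subseteq G_{i-1}$ with $s_i$ edges forming a vertex-disjoint union of $r\leq s_i$ paths (forced by $F\subseteq H_i$), and (ii) an extension of $F$ to a Hamilton cycle on $[n]$. Contracting each path of $F$ to a super-vertex gives at most $2^{r-1}(n-s_i-1)!$ extensions; the bounded degree of $G_{i-1}$ bounds the number of valid $F$'s. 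Normalizing by $(n-1)!/2$ yields a per-stage factor $\leq (Ak)^{s_i}/s_i!$, and summing over $s_i$ weighted by $p^{-s_i}$ followed by telescoping over $i=1,\ldots,k$ should produce the $C^k$ bound.

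The main obstacle is that running this telescoping naively only yields $\exp(\Theta(k^2))$ rather than $C^k$, which is inadequate once $k=\Theta(\log n)$. Closing this gap requires exploiting a sharp cancellation: the hypergeometric ratio $(m)_e/(N)_e$ decays faster than $p^e$ by a factor $\exp\!\left(-\Theta((kn-e)^2/n^2)\right)$, and this extra decay precisely compensates the $p^{-s}$ overlap gain. Morally, conditioning on $|E|=m$ removes the log-normal fluctuations of $\log X_n$ that appear in the binomial model $G(n,p)$, leaving only a much smaller residual variance that translates into the $C^k$ bound. Integrating this refined decay into the combinatorial sum term-by-term --- so that each overlap factor $p^{-s}$ is paired against its matching $\exp(-\Theta(k^2))$ penalty, uniformly in $s$ and $k$ --- is the technical crux and is exactly what permits $k$ to range up to $\log n/8$.
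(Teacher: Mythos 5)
Your setup (expanding the $k$-th moment over ordered $k$-tuples, grouping by the overlap excess $s$, and bounding the number of tuples stage by stage) is the same as the paper's, but the step you flag as the "technical crux" is exactly where the argument fails as written, and the mechanism you propose to close it is not the right one. First, the extra decay of the hypergeometric ratio is misstated: with $e=|\bigcup_i E(H_i)|\approx kn$ edges one has $(m)_e/(N)_e=p^{e}\exp\bigl(-\tfrac{e^2}{2m}(1-p)+o(1)\bigr)$, i.e.\ a decay of order $\exp(-\Theta(e^2/n^2))=\exp(-\Theta(k^2))$ depending on $e\approx kn$, not $\exp(-\Theta((kn-e)^2/n^2))$, which is $1+o(1)$ in the relevant range $s=O(\log^3 n)$. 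Second, even with the correct form, this decay does \emph{not} "precisely compensate" the overlap gain: the overlap sum contributes $\sum_s\frac{(k(k-1))^s}{s!}p^{-s}=e^{k(k-1)/p}$, the hypergeometric correction contributes $e^{-k^2(1-p)/p+O(k)}$ relative to $(\mathbb{E}[X_{n,m}])^k$, and the net is $e^{k(k-1)+O(k)}$ --- so your scheme still produces only $\mathbb{E}[X_{n,m}^k]\le e^{k^2+O(k)}(\mathbb{E}[X_{n,m}])^k$, which is useless for $k=\Theta(\log n)$ (Markov then forces $K$ polynomial in $n$, destroying the application to the LIL).

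The missing ingredient, which the paper isolates in a separate lemma on $M(a)$, is that when you extend the $\ell_t$ forced edges of $H_t$ to a full Hamilton cycle you must also require the \emph{non-forced} edges of $H_t$ to avoid all $\approx 2(t-1)n$ edges of $H_1\cup\dots\cup H_{t-1}$; your count $2^{r-1}(n-s_i-1)!$ ignores this constraint. Counting extensions subject to it (the paper does this by inclusion--exclusion over "bad vertices", each vertex carrying a forbidden list of size at most $2(t-1)$, with a truncation at $\log^2 n$ terms; it is the Hamilton-cycle analogue of the Bregman--Minc step in the matching case) gains a factor $(1+o(1))e^{-2(t-1)}$ per stage, hence $e^{-k(k-1)}$ overall in $M(a)$ for $a\le\log^3 n$. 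It is precisely this factor that cancels the residual $e^{k(k-1)}$ left over after the hypergeometric decay meets the overlap sum, and the exact cancellation of all $\Theta(k^2)$ exponents is what yields the $C^k$ bound up to $k\le\frac{\log n}{8}$. Without it, your proposal does not prove the lemma.
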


Again, Markov's bound implies that for $K\geq C$ one has:
\[
\Pr[X_{n,m}]\geq K \mathbb{E}[X_{n,m}]]\leq (C/K)^k
\]by taking $\delta:=\min\{p/2,(1-p)/2\}$, $k=\frac{\log n}{8}$, and $K=Ce^{32}$, we have the following corollary
\begin{corollary}\label{concentration}Let $0<p<1$ be a constant. There is a constant $K$ (depending on $p$) such that for any $\frac{p}{2}{n\choose 2}\leq m\leq \frac{1+p}{2}{n\choose 2}$ one has
$$
\Pr[X_{n,m}\geq K\mathbb{E}[X_{n,m}]\leq n^{-4}
$$
\end{corollary}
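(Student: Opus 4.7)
The corollary is a straightforward consequence of Lemma \ref{conc1} via Markov's inequality applied to a sufficiently high moment $X_{n,m}^k$: once the moment bound of the lemma is in place, the usual optimization of the exponent $k$ turns a constant-factor moment bound into polynomial tail decay.

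To execute this, I would first set $\delta := \min\{p/2,(1-p)/2\}$, so that the hypothesis $\frac{p}{2}\binom{n}{2} \leq m \leq \frac{1+p}{2}\binom{n}{2}$ of the corollary implies the hypothesis $\delta\binom{n}{2} \leq m \leq (1-\delta)\binom{n}{2}$ of Lemma \ref{conc1}; let $C = C(p)$ be the resulting constant. Next, I would take $k := \lfloor (\log n)/8 \rfloor$, which satisfies $k \leq (\log n)/8$ and so lies in the admissible range of the lemma. The lemma then yields $\mathbb{E}[X_{n,m}^k] \leq C^k (\mathbb{E}[X_{n,m}])^k$, and Markov's inequality applied to the nonnegative variable $X_{n,m}^k$ gives
$$\Pr[X_{n,m} \geq K\mathbb{E}[X_{n,m}]] = \Pr\left[X_{n,m}^k \geq K^k (\mathbb{E}[X_{n,m}])^k\right] \leq \left(\frac{C}{K}\right)^k$$
for any $K>0$.

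Finally, choosing $K := Ce^{32}$ makes the right-hand side at most $e^{-32k} \leq n^{-4}$ for all sufficiently large $n$ (the $o(1)$ slack from the floor in $k$ can be absorbed into $K$ by a further constant factor, while small $n$ are handled by enlarging $K$). There is no real obstacle here: all the difficulty is encapsulated in Lemma \ref{conc1}, and the corollary is just the standard passage from a moment bound to a Chernoff-style tail estimate, with the exponent tuned so that the decay rate beats any fixed polynomial. The analogous Corollary \ref{concentration0} for perfect matchings would be obtained in exactly the same way from Lemma \ref{conc0}, choosing $k = 4\log n$ (which satisfies $k = o(n^{1/3})$) and $K = Ce$ to arrive at the bound $n^{-4}$.
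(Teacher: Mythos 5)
Your proposal is correct and follows essentially the same route as the paper: apply Lemma \ref{conc1} with $\delta=\min\{p/2,(1-p)/2\}$, use Markov's inequality on $X_{n,m}^k$ with $k$ of order $\frac{\log n}{8}$, and take $K=Ce^{32}$ so that $(C/K)^k\leq n^{-4}$ (your remark about absorbing the floor and small $n$ into $K$ is a harmless refinement the paper glosses over).
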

The last lemma is an approximation to the lower factorial that we will use throughout.
\begin{lemma}\label{lower factorial}Let $t,\ell$ be integers such that $\ell=o(t^{2/3})$. Then,
\[
(t)_\ell=t^\ell\exp\left(-\frac{\ell(\ell-1)}{2t}+o(1)\right)
\]
\end{lemma}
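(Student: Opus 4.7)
The plan is to take logarithms and apply a Taylor expansion term by term. Writing
\[
\log (t)_\ell = \ell \log t + \sum_{i=0}^{\ell-1}\log\!\left(1-\tfrac{i}{t}\right),
\]
the hypothesis $\ell=o(t^{2/3})$ forces in particular $(\ell-1)/t\to 0$, so each summand lies in the regime where $\log(1-x)=-x-x^2/2-x^3/3-\cdots$ converges, uniformly over $i\in\{0,\dots,\ell-1\}$. The entire argument is then a matter of keeping track of which order in this expansion still contributes a genuine quantity and which is absorbed into $o(1)$.

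The main step is to sum the expansion order by order. The first-order piece gives exactly
\[
-\frac{1}{t}\sum_{i=0}^{\ell-1}i=-\frac{\ell(\ell-1)}{2t},
\]
which is the explicit constant appearing in the statement. The second-order piece contributes
\[
-\frac{1}{2t^2}\sum_{i=0}^{\ell-1}i^2 = O\!\left(\frac{\ell^3}{t^2}\right),
\]
and this is precisely where the assumption $\ell=o(t^{2/3})$ is calibrated: it forces $\ell^3=o(t^2)$, so the second-order contribution is $o(1)$. Bounding the remaining tail of the series by a geometric series in $(\ell-1)/t$ shows that the cubic and higher terms contribute at most $O(\ell^4/t^3) = o(t^{-1/3}) = o(1)$.

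Collecting these pieces produces
\[
\log(t)_\ell = \ell \log t - \frac{\ell(\ell-1)}{2t} + o(1),
\]
and exponentiating gives the claim. The argument is purely analytic with no combinatorial input, so I anticipate no real obstacle; the only care needed is to verify that $\ell=o(t^{2/3})$ is exactly the sharp threshold required to kill the second-order correction, and to confirm that the tail of the Taylor series is genuinely dominated by its first omitted term (a routine geometric estimate since $(\ell-1)/t\to 0$).
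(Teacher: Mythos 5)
Your proof is correct and follows essentially the same route as the paper's appendix argument: write $(t)_\ell=t^\ell\prod_{i<\ell}(1-i/t)$, expand the logarithm, recover $-\ell(\ell-1)/(2t)$ from the linear term, and absorb the $O(\ell^3/t^2)$ contribution into $o(1)$ using $\ell=o(t^{2/3})$. No issues.
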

In the proof of the upper-tail estimate for perfect matchings, we will need Bregman's theorem, which allows us to bound the number of perfect matchings given the degree sequence:

\begin{theorem}[Bregman-Minc inequality; \cite{Bregman}]\label{bregman}Let $G$ be a bipartite graph with two color classes $V=\{v_1,\ldots,v_n\}$ and $W=\{w_1,\ldots,w_n\}$. Denote by $M$ the number of perfect matchings and
$d_{v_i}$ the degree of $v_i$.  Then \[
M\leq \prod_{i=1}^n(d(v_i)!)^{1/d(v_i)}
\]
\end{theorem}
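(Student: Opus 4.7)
The plan is to apply the entropy-method proof of Radhakrishnan. Let $\sigma : V \to W$ be a uniformly random perfect matching of $G$, so that $H(\sigma) = \log M$, and let $\pi$ be an independent uniformly random permutation of $V$. For any fixed $\pi$ the chain rule yields
\[
H(\sigma) = \sum_{i=1}^n H\bigl(\sigma(v_i) \,\big|\, \sigma(v_j) : \pi(j) < \pi(i)\bigr),
\]
so averaging over $\pi$ and invoking the standard bound $H(X \mid Y) \le \mathbb{E}_Y \log |\mathrm{supp}(X \mid Y)|$ gives
\[
\log M \;\le\; \sum_{i=1}^n \mathbb{E}_\sigma \mathbb{E}_\pi \log |A_i(\sigma,\pi)|,
\]
where $A_i(\sigma,\pi)$ is the set of neighbors of $v_i$ not yet matched to a vertex appearing before $v_i$ in $\pi$. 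Here we have also used Fubini to swap the expectations in $\sigma$ and $\pi$.

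The crux is the evaluation of $\mathbb{E}_\pi \log |A_i(\sigma,\pi)|$ for a fixed matching $\sigma$. Consider the $d(v_i)$ vertices consisting of $v_i$ together with $\sigma^{-1}(w)$ for each $w \in N(v_i) \setminus \{\sigma(v_i)\}$. Under uniform random $\pi$, the rank of $v_i$ inside this set of size $d(v_i)$ is uniform on $\{1,\ldots,d(v_i)\}$, because the restriction of a uniform permutation to any fixed subset is itself a uniform permutation of that subset. If this rank equals $k$, then exactly $d(v_i)-k$ of the other special vertices come after $v_i$, so that the neighbor $\sigma(v_i)$ together with those $d(v_i)-k$ still-available candidates gives $|A_i| = d(v_i) - k + 1$. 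Therefore
\[
\mathbb{E}_\pi \log |A_i(\sigma,\pi)| \;=\; \frac{1}{d(v_i)} \sum_{k=1}^{d(v_i)} \log k \;=\; \frac{\log d(v_i)!}{d(v_i)}.
\]
Summing over $i$ and exponentiating delivers exactly $M \le \prod_{i=1}^n d(v_i)!^{1/d(v_i)}$.

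The only nontrivial step is choosing the right set of $d(v_i)$ special vertices and recognizing that $v_i$ has a uniformly random rank inside it; once that is pinned down, the rest is just the chain rule, the log-support bound, and a Fubini interchange. The remaining bookkeeping item worth watching is that the expectation $\mathbb{E}_\sigma$ interacts harmlessly with the per-$v_i$ computation, because the estimate $\frac{\log d(v_i)!}{d(v_i)}$ is the same for \emph{every} realization of $\sigma$ and depends only on the fixed graph $G$. An alternative route is Bregman's original induction on the edges, but the entropy argument above is cleaner and more transparent, which is why I would adopt it.
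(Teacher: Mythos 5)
Your entropy argument is correct: it is Radhakrishnan's well-known proof of the Bregman--Minc inequality, and the steps are all sound — the chain rule over a random ordering $\pi$, the bound $H(X\mid Y)\le \mathbb{E}_Y\log|\mathrm{supp}(X\mid Y)|$ (noting that $A_i(\sigma,\pi)$ is indeed a function of the conditioned values $\sigma(v_j)$, $\pi(j)<\pi(i)$, so it legitimately dominates the conditional support), and the key observation that, for fixed $\sigma$, the rank of $v_i$ inside the $d(v_i)$-element set $\{v_i\}\cup\{\sigma^{-1}(w): w\in N(v_i)\setminus\{\sigma(v_i)\}\}$ is uniform, giving $\mathbb{E}_\pi\log|A_i|=\frac{\log d(v_i)!}{d(v_i)}$ exactly. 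The only point worth a sentence is the degenerate case $M=0$ (no uniform random matching exists), where the inequality holds trivially. As for comparison with the paper: there is nothing to compare, since the paper does not prove this statement at all — it imports Bregman's theorem as a black-box tool (cited to Bregman's original paper) for the upper-tail estimate on perfect matchings in Section \ref{upper tail estimates}. So your proposal supplies a complete, self-contained proof where the paper relies on a citation; the entropy route you chose is shorter and more transparent than Bregman's original permanent argument, and is the standard modern presentation.
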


\section{Proof of Theorem \ref{main1}} \label{sec:proofs}

\begin{proof}
Let $H$ be a graph on $\ell$ vertices, where $\ell$ is a fixed
constant. For the sake of simplicity of notation, throughout the
whole proof we omit the up-script $G$ from the random variables. In
order to prove Theorem \ref{main1} we aim to show that for every
$\varepsilon>0$ we have both the upper bound
$$\Pr\left[\frac{X_n-\mu_n}{\sigma_n} \geq (1+\varepsilon)\sqrt{2\log\log n} \textrm{ for infinitely
many }n\right]=0,$$ and the lower bound
$$\Pr\left[\frac{X_n-\mu_n}{\sigma_n} \geq (1-\varepsilon)\sqrt{2\log\log n} \textrm{ for infinitely
many }n\right]=1.$$

Since throughout the proof we make use of Theorem \ref{thm:Rinott}
for estimating the upper tails of random variables of the form
$X_n-X_m$, it will be convenient to introduce some notation. For
every $n\geq m$ let $\mathcal S_{n,m}=\mathcal H_n\setminus \mathcal
H_m$, where $\mathcal S_{n,0}=\mathcal H_n$. Let us define a
dependency graph for $\mathcal S_{n,m}$ in the following manner. The
vertex set of $D_{n,m}$ is $\mathcal S_{n,m}$, and the edge set
consists of all pairs $s,t\in \mathcal S_{n,m}$ for which $|E(s)\cap
E(t)|\geq 1$ (that is, pairs of copies of $H$ which share at least
one edge). Note that it trivially follows from the way we labeld
$\mathcal H$ that $V(D_{n,m})=\mathcal S_{n,m}$ is the number of
copies of $H$ with at least one vertex taken from
$\{m+1,\ldots,n\}$. In addition, it is easy to see that
$$\Delta(D_{n,m})\leq
c'_H|E(H)|n^{|V(H)|-2}=\Theta(n^{\ell-2}),$$ where $c'_H$ is the
maximum number of automorphisms of $H$ preserving some edge. Now,
let us denote by $X_{n,m}:=X_{\mathcal S_{n,m}}$ and let
$\mu_{n,m}$ and $\sigma^2_{n,m}$ be its expectation and variance,
respectively. Trivially, we have $\mu_{n,m}=\mu_n-\mu_m$ and
$|\xi_t-\mathbb{E}(\xi_t)|\leq 1$ for every $t\in V(D_{n,m})$.
Therefore, while applying Theorem \ref{thm:Rinott} for a large $x$
with $C=\Delta(D_{n,m})$ and $B=1$, using Remark \ref{rem1} we
obtain
\begin{equation} \label{eq:rinott}
\Pr\left[\frac{X_{n,m}-\mu_{n,m}}{\sigma_{n,m}}\geq x\right]=\frac 1{x\sqrt{2\pi}} e^{-x^2/2}+O\left(\frac{n^{\ell-2}}{\sigma_{n,m}}\right).
\end{equation}
Note that whenever we use \eqref{eq:rinott}, one should verify that the error
term is negligible compared to the first summand on the right hand
side. Most of the times it will be quite easy to check and therefore
we omit the calculations. For some relevant estimates on the variances that we use in the proof, the reader should consult Section \ref{sec:calculations}.




Let us start with proving the upper bound.

\subsection{Upper bound}\label{sec:upper}

Let $\varepsilon>0$ be some positive constant and let
$x=(1+\varepsilon/4)\sqrt{2\log\log n}$. Note that for a fixed $n$, by distinguishing between the two cases $|h\cap h'|=2$ and $|h\cap h'|>2$, we obtain $$\sigma^2_n:=\sigma^2_{n,0}=\Theta\left(n^{\ell}+\sum_{h,h'\in \mathcal S_n}Cov(\xi_h\xi_h')\right)=\Theta\left(n^{\ell}+n^{2\ell-2}+n^{\ell}n^{\ell-3}\right).$$
Therefore, by \eqref{eq:rinott} we have
\begin{align*}\Pr\left[\frac{X_n-\mu_n}{\sigma_n}\geq
x\right]&=\frac 1{x\sqrt{2\pi}}
e^{-x^2/2}+O\left(\frac{1}{n}\right)\\
&=O\left((\log n)^{-(1+\varepsilon/4)^2}\right).
\end{align*}
Using this estimate for every (large enough) $n$ of the form $a^k$ (where $a>1$),
we obtain that

$$\sum_{k=1}^\infty \Pr\left[\frac{X_{a^k}-\mu_{a^k}}{\sigma_{a^k}}\geq
(1+\varepsilon/4)\sqrt{2\log\log {a^k}}\right]=\sum_{k=1}^\infty
O\left(k^{-(1+\varepsilon/4)^2}\right)<\infty ,$$

and therefore, it follows from the Borel-Cantelli Lemma that for
some $k_0\in \mathbb{N}$ we have

$$\Pr\left[\frac{X_{a^k}-\mu_{a^k}}{\sigma_{a^k}}\leq
(1+\varepsilon/4)\sqrt{2\log\log {a^k}} \textrm{ for all }k\geq
k_0\right]=1.$$

Note that if $a$ is not an integer then we always assume that $k$ is sufficiently large and we set $n=\lfloor a^k\rfloor$. As it does not affect any of our asymptotic calculations, we will omit the flooring signs. 

In order to complete the proof (of the upper bound), we need to
``close the gaps". That is, we need to show that there exists
$k_1\in \mathbb{N}$ such that

$$\Pr\left[\frac{X_{n}-\mu_{n}}{\sigma_{n}}\leq
(1+\varepsilon)\sqrt{2\log\log {n}}\textrm{ for every } n\geq
a^{k_1}\right]=1.$$

To this end we act in the following way. Fix $a>1$ which is close
enough to $1$ (to be determined later), and we show that
$\sum_{k}\delta_k<\infty$, where
$$\delta_k:=\Pr\left[X_{n,a^k}-\mu_{n,a^k}\geq
\frac{\varepsilon}2\sigma_n\sqrt{2\log\log {n}} \textrm{ for some
}a^k\leq n\leq a^{k+1}\right].$$

Therefore, using the Borel-Cantelli Lemma we conclude that there
exists $k_1$ for which

$$\Pr\left[X_{n,a^k}-\mu_{n,a^k}<
\frac{\varepsilon}2\sigma_n\sqrt{2\log\log {n}} \textrm{ for every }
k\geq k_1 \text{ and }a^k\leq n\leq a^{k+1}\right]=1.$$

Next, recall that
$$\Pr\left[X_{a^k}-\mu_{a^k}\leq
(1+\varepsilon/4)\sigma_{a^k}\sqrt{2\log\log {a^k}} \textrm{ for all
}k\geq k_0\right]=1,$$ and set $k_2:=\max\{k_0,k_1\}$.

All in all, we obtain that with probability 1, for every $k\geq k_2$
and for every $a^k\leq n\leq a^{k+1}$ we have
\begin{align*} X_n-\mu_n&=[(X_n-X_{a^k})-(\mu_n-\mu_{a^k})]+
(X_{a^k}-\mu_{a^k})\\&=(X_{n,a^k}-\mu_{n,a^k})+(X_{a^k}-\mu_{a^k})\\
&< \frac{\varepsilon}2\sigma_n\sqrt{2\log\log
{n}}+(1+\varepsilon/4)\sigma_{a^k}\sqrt{2\log\log n}\\
&< (1+\varepsilon)\sigma_n\sqrt{2\log\log n},
\end{align*}
as desired.

In order to complete our argument, we need to estimate $\delta_k$
and to show that indeed $\sum \delta_k<\infty$. This is done in the
following claim, which is a modification of Levy's inequality to our special case of dependent random variable.

\begin{claim} \label{claim1}
 $\delta_k$ is such that $\sum \delta_k<\infty$.
\end{claim}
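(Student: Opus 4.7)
The claim bounds the maximum fluctuation of the centered process $Y_n := X_{n,a^k} - \mu_{n,a^k}$ (so $Y_{a^k}=0$) over the block $[a^k,a^{k+1}]$. Since $\sigma_n\sqrt{2\log\log n}$ is monotone in $n$,
\[
\delta_k \le \Pr\Bigl[\max_{a^k \le n \le a^{k+1}} Y_n \ge t_k\Bigr],\qquad t_k:=\tfrac{\varepsilon}{2}\sigma_{a^k}\sqrt{2\log\log a^k},
\]
so it suffices to control the maximum on the left. The strategy is: (i) convert this maximum into a single endpoint event via a Levy/Ottaviani-type inequality, and then (ii) apply Remark \ref{rem1} at $n=a^{k+1}$, exploiting that the ``new'' variance $\sigma_{a^{k+1},a^k}^2$ is only a fraction $O(a-1)$ of $\sigma_{a^k}^2$ (this follows from the variance formulas of Section \ref{sec:calculations}: copies in $\mathcal S_{a^{k+1},a^k}$ each use at least one of the $(a-1)a^k$ new vertices, and this proportionality propagates to the relevant covariances).

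\textbf{Step A: Levy-type reduction.} Let $\tau := \min\{n\in[a^k,a^{k+1}] : Y_n \ge t_k\}$, with $\tau=\infty$ otherwise. On $\{\tau=n\}\cap\{Y_{a^{k+1}}-Y_n \ge -t_k/2\}$ we have $Y_{a^{k+1}} \ge t_k/2$, so disjointness of the events $\{\tau=n\}$ gives
\[
\Pr[Y_{a^{k+1}}\ge t_k/2]\;\ge\;\sum_{n=a^k}^{a^{k+1}}\Pr\bigl[\tau=n,\ Y_{a^{k+1}}-Y_n \ge -t_k/2\bigr].
\]
The heart of Step A is to establish the ``no-reversion'' estimate
\[
\Pr\bigl[Y_{a^{k+1}}-Y_n \ge -t_k/2 \bigm| \tau=n\bigr]\;\ge\;\tfrac{1}{2}\qquad(\star)
\]
uniformly in $n\in[a^k,a^{k+1}]$, whence summing yields $\delta_k \le 2\Pr[Y_{a^{k+1}}\ge t_k/2]$. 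Unconditionally, $Y_{a^{k+1}}-Y_n = X_{a^{k+1},n}-\mu_{a^{k+1},n}$ has mean zero and variance at most $\sigma_{a^{k+1},a^k}^2 = O((a-1)\sigma_{a^k}^2)$, so Chebyshev already gives an $O((a-1)/\log\log a^k)$ bound, which is $\le 1/2$ for $a$ close to $1$ and $k$ large.

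\textbf{Step B: endpoint tail and choice of $a$.} Apply Theorem \ref{thm:Rinott} and Remark \ref{rem1} to $Y_{a^{k+1}}$ using the dependency graph $D_{a^{k+1},a^k}$ of maximum degree $\Theta(a^{k(\ell-2)})$ already computed in the main text. Setting $x:=(t_k/2)/\sigma_{a^{k+1},a^k}$ and using the variance estimate,
\[
x^2\;\ge\;\frac{c\varepsilon^2}{a-1}\,\log\log a^k \qquad\text{for some absolute }c>0,
\]
so $e^{-x^2/2}=O\!\bigl(k^{-c\varepsilon^2/(2(a-1))}\bigr)$, while the Rinott error term $O(a^{k(\ell-2)}/\sigma_{a^{k+1},a^k})$ is comfortably $o(k^{-2})$. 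Fix $a>1$ so close to $1$ that $c\varepsilon^2/(2(a-1)) \ge 2$; combined with Step A this yields $\delta_k = O(k^{-2})$, hence $\sum_k \delta_k < \infty$ as required.

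\textbf{Main obstacle.} The only non-routine point is the \emph{conditional} version $(\star)$. The event $\{\tau=n\}$ is measurable with respect to $\mathcal F_n := \sigma(\text{edges inside }[n])$, while copies counted by $X_{a^{k+1},n}$ may use edges inside $[n]$ (they only need at least one vertex outside $[n]$), so the increment $Y_{a^{k+1}}-Y_n$ is not independent of the past and classical Ottaviani does not apply verbatim. I plan to handle this by splitting $X_{a^{k+1},n}=X^{\mathrm{new}}+X^{\mathrm{mix}}$ according to whether the copy uses only vertices in $(n,a^{k+1}]$ or also some in $[n]$: the ``new'' piece is $\sigma(\text{edges incident to }(n,a^{k+1}])$-measurable, hence independent of $\mathcal F_n$ and amenable to conditional Chebyshev, while for the mixed remainder the law of total variance shows that its conditional variance (given $\mathcal F_n$) is of the same order as its unconditional variance outside a negligible set of conditionings. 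Combining these inputs gives $(\star)$ with room to spare, and the proof then closes as above.
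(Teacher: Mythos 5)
Your skeleton is the same as the paper's: a L\'evy/Ottaviani-type reduction of the maximum over the block $[a^k,a^{k+1}]$ to an endpoint event, an endpoint tail bound via Theorem \ref{thm:Rinott} using $\sigma^2_{a^{k+1},a^k}=\Theta\bigl((a-1)\sigma^2_{a^k}\bigr)$, and then choosing $a-1$ small so the exponent beats $k^{-2}$; your Step B is essentially the paper's estimate \eqref{6}, and the normalization $t_k$ at $a^k$ is fine. The problem is that the entire difficulty of the claim sits in your $(\star)$, and what you offer there is a plan that does not work as stated. The ``law of total variance'' device controls the \emph{conditional variance} of the mixed part, but the real danger under the conditioning $\{\tau=n\}$ is the conditional \emph{mean}: given $\mathcal F_n$, the expectation of $X_{a^{k+1},n}$ is a weighted count of partial copies of $H$ inside $G[n]$ (e.g.\ for triangles it is, up to constants, $(a^{k+1}-n)p^2$ times the edge count of $G[n]$), and its downward fluctuations at the scale $t_k$ have probability only about $(\log n)^{-c\varepsilon^2/(a-1)^2}$ — polylogarithmically, not polynomially, small. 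Moreover ``outside a negligible set of conditionings'' has to mean negligible \emph{relative to} $\Pr[\tau=n]$, which is itself a rare event; the way around that is to use disjointness of $\{\tau=n\}$ and then control the union over $n$ of the bad-conditioning events, i.e.\ yet another maximal inequality, this time for the conditional-mean process (a subgraph-count process of the same type you started with). None of this is routine, and none of it is supplied; your unconditional Chebyshev bound is, as you yourself note, beside the point for $(\star)$.

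The paper sidesteps conditioning entirely: it intersects the bad event with the endpoint event $\mathcal E_{a^{k+1},a^k,\tau/2}$, observes that on $A_j\cap\mathcal E_{a^{k+1},a^k,\tau/2}$ the increment satisfies $X_{a^{k+1},j}<\mu_{a^{k+1},j}-\tau/2$ (as in \eqref{1}--\eqref{2}), and then invokes a \emph{negative correlation} between $A_j$ and this lower-tail event (step \eqref{3}), so that only the unconditional lower-tail estimate \eqref{4} is needed; the leftover term $\Pr[\neg\mathcal E_{a^{k+1},a^k,\tau/2}]$ is the $k^{-2}$ of \eqref{6}. To close your gap you would need either to prove a correlation/association inequality of that kind for your stopping event and the increment's lower tail, or to actually carry out the conditional-mean control (including the maximal inequality for the partial-copy counts) sketched above; as written, $(\star)$ — the one step you correctly identify as non-routine — remains unproved.
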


\begin{proof} Fix $k\in \mathbb{N}$. For each $m\leq n$ and for each $\tau>0$, let $\mathcal
E_{n,m,\tau}$ denote the event $\{X_{n,m}-\mu_{n,m}\leq \tau\}$.
Let $n=a^{k+1}$, $\tau=\frac{\varepsilon}2\sigma_n\sqrt{2\log\log
n}$, and for every $a^k\leq j\leq a^{k+1}$ define

$$A_j:=\begin{cases}\left(\bigcap_{i=a^k}^{j-1}\mathcal E_{i,a^k,\tau}\right)\cap \neg\mathcal E_{j,a^k,\tau}   &\mbox{ for } j\geq a^k+1 \\
\neg\mathcal E_{a^k,a^k,\tau}& \mbox{ for } j=a^k.
\end{cases}$$

Note that we have $\tau=\sigma_n\cdot \omega(1)$ and that $\tau\leq \mu_{n,j}$, both will be used later in the proof. In order to see the latter, recall that 
$\mu_{n,j}=\Theta((n-j)n^{\ell-1})$ and $\sigma^2_n\leq n^{\ell}+O(n^{2\ell-2})$. Therefore, $\sigma_n=O(n^{\ell-1})= o(\mu_{n,j}/\log n)$ for all $j\leq n-\omega(\log n)$. 

Now, let $M_n:=\bigcup_{j=a^k}^n \neg\mathcal E_{j,a^k,\tau}$ and
note that $M_n=\bigcup_{j=a^k}^{n} A_j$ and that
$\delta_k=\Pr\left[M_{n}\right]$. We start with evaluating the
following probability:

\begin{align}\label{1} \Pr\left[M_{n}\cap \mathcal
E_{n,a^k,\tau/2}\right]=\sum_{j=a^k}^{n}\Pr\left[A_j\cap \mathcal
E_{n,a^k,\tau/2}\right].
\end{align}

Note that if $A_j\cap \mathcal E_{n,a^k,\tau/2}$, then in particular
we have
$$X_{j,a^k}-\mu_{j,a^k}> \tau \textrm{ and }
X_{n,a^k}-\mu_{n,a^k}\leq \tau/2.$$

Therefore, we conclude that
$$(X_{j,a^k}-\mu_{j,a^k})-(X_{n,a^k}-\mu_{n,a^k})>\tau/2,$$

which is equivalent to
\begin{align}\label{2}
X_{n,j}<\mu_{n,j}-\tau/2.
\end{align}

Moreover, a moment's thought reveals that for every $j$, the events
$\{X_{n,j}<\mu_{n,j}-\tau/2\}$ and $A_j$  are negatively
correlated, and therefore, one can upper bound \eqref{1} by
\begin{align}\label{3}
\sum_{j=a^k}^{n}\Pr\left[A_j \textrm{ and
}(X_{n,j}<\mu_{n,j}-\tau/2)\right]&\leq
\sum_{j=a^k}^{n}\Pr\left[A_j\right]\Pr\left[X_{n,j}<\mu_{n,j}-\tau/2\right].
\end{align}

Now, since clearly $\sigma_{n,j}\leq \sigma_n$, and since
$\tau=\sigma_n \cdot \omega(1)$, it follows by \eqref{eq:rinott}
that for every $a^k\leq j\leq n-\log^2n$ we have

\begin{align}\label{4}
\Pr\left[X_{n,j}<\mu_{n,j}-\tau/2\right]&=\Pr\left[\frac{X_{n,j}-\mu_{n,j}}{\sigma_{n,j}}<\frac{\tau}{2\sigma_{n,j}}\right]\nonumber \\
&=\exp\left(-\omega(1)\right)=o(1).
\end{align}

For larger values of $j$ we will simply observe that 
$$\Pr\left[X_{j,a^k}>\mu_{j,a_k}+\tau\right]=o(1),$$
as desired.

Combining \eqref{4} with \eqref{1} and \eqref{3}, we obtain

\begin{align} \label{5}
\Pr\left[M_{n}\cap \mathcal E_{n,a^k,\tau/2}\right]= \delta_k \cdot
o(1).
\end{align}

As a penultimate step, we need to estimate $\Pr\left[\neg\mathcal
E_{n,a^k,\tau/2}\right]$. In order to do so we first observe that
since we choose $a>1$ to be very close to $1$, it is easy to verify
that in this case we have
$\sigma^2_{n,j}=\Theta\left(j(n-j)n^{2\ell-4}\right)$ (while
$\sigma^2_n=\Theta\left(n^{2\ell-2}\right)$). Now, together with
\eqref{eq:rinott}, these estimates imply that for some small constant
$C:=C(\varepsilon)>0$ we have

\begin{align}\label{6}
\Pr\left[\neg\mathcal
E_{n,a^k,\tau/2}\right]&=\Pr\left[\frac{X_{n,a^k}-\mu_{n,a^k}}{\sigma_{n,a^k}}\geq
\tau/(2\sigma_{n,a^k})\right] \nonumber \\
&\leq \exp\left( -\frac{\tau^2}{8\sigma^2_{n,a^k}}\right)
\nonumber\\
&=\exp \left(-\frac{Ca^{4k}\log\log a^{k+1}}{a^{4k}(a-1)}\right),
\end{align}
and by choosing $a-1<C/2$, one can upper bound \eqref{6} with
$k^{-2}$ (for large $k$).

All in all, we obtain

\begin{align*}
\delta_k=\Pr\left[M_n\right]&=\Pr\left[M_n\cap \mathcal
E_{n,a^k,\tau/2}\right]+\Pr\left[M_n\cap \neg\mathcal
E_{n,a^k,\tau/2}\right]\\
&\leq \delta_k\cdot o(1)+k^{-2},
\end{align*}
and therefore, $\delta_k=O(k^{-2})$ and $\sum \delta_k<\infty$ as
desired. This completes the proof of the claim, and therefore the
proof of the upper bound as well.
\end{proof}

Before we proceed to the lower bound, let us make a few observations
which can be obtained in a similar way as the above proof. We make
use of those in the next subsection.

\begin{enumerate}[$(O1)$]
\item For every $\varepsilon>0$ we have $\Pr\left[X_n-\mu_n \leq
-(1+\varepsilon)\sigma_n\sqrt{2\log\log n} \textrm{ for infinitely
many }n\right]=0$.
\item For $k\in \mathbb{N}$, let $\zeta_k$ be the random variable
counting the number of copies of $H$ with vertices from both
$\{a^{k}+1,\ldots, a^{k+1}\}$ and $[a^k]$. Let us also denote by
$\widetilde {\mu}_k$ and $\widetilde{\sigma}_k^2$ its expectation
and variance, respectively. Then, for every $\varepsilon>0$ we have
$$\Pr\left[\zeta_k-\widetilde{\mu}_{k}\leq
-(1+\varepsilon)\widetilde{\sigma}_k\sqrt{2\log\log a^{k+1}}
\textrm{ for infinitely many }k\right]=0.$$
\end{enumerate}

\subsection{Lower bound}\label{sec:lower}


Let $\varepsilon>0$ be some fixed positive constant, we aim to show
that
$$\Pr\left[\frac{X_n-\mu_n}{\sigma_n}\geq
(1-\varepsilon)\sqrt{2\log\log n} \textrm{ for infinitely many
}n\right]=1.$$

To this end, we focus on integers $n_k$ of the form $a^k$, where
$a>1$ is a large enough constant to be determined later.

For a fixed $k\in \mathbb{N}$, let $\eta_k$ be the random variable
that counts the number of copies of $H$ which are fully contained in
$\{a^k+1,\ldots,a^{k+1}\}$. Note that the set $\{\eta_k :
k\in\mathbb{N}\}$ is clearly independent, and that the random
variables $\eta_k$ are distributed the same as $X_{a^{k+1}-a^k}$
(and therefore, $\sigma^2_{\eta_k}=\sigma^2_{a^{k+1}-a^k}$ for every
$k$). Therefore, one can easily check that for large $a$ and $k$ we
have
\begin{align}\label{lowerbound:1}(1-\varepsilon/4)\sigma_{\eta_k}\sqrt{2\log\log
(a^{k+1}-a^{k})}\geq
(1-\varepsilon/2)\sigma_{a^{k+1}}\sqrt{2\log\log a^{k+1}}
\end{align}
(this can be verified using the simple observation that
$\lim_{a,k\rightarrow \infty}\frac{\log\log
(a^{k}-a^{k-1})}{\log\log a^k}=1$ and the estimate \eqref{varXin}
given in Section \ref{sec:calculations}).

Now, letting $x=(1-\varepsilon/4)\sqrt{2\log\log (a^{k+1}-a^k)}$ it
follows by $(*)$ that for some $\gamma>0$ we have

$$\Pr\left[\frac{\eta_k-\mu_{\eta_k}}{\sigma_{\eta_k}}\geq
x\right]=\Omega\left(k^{-1+\gamma}\right),$$

and therefore,
$$\sum_k\Pr\left[\frac{\eta_k-\mu_{\eta_k}}{\sigma_{\eta_k}}\geq
x\right]=\infty.$$

Using the Borel-Cantelli Lemma it thus follows that

$$\Pr\left[\frac{\eta_k-\mu_{\eta_k}}{\sigma_{\eta_k}}\geq
x \textrm{ for infinitely many } k\right]=1.$$

%

Now, let us choose $a>1$ to be a fixed large enough constant so
that for sufficiently large $k$ the following inequalities hold (the
existence of such $a$ for which all these inequalities hold follows
immediately from the relevant estimates in Section
\ref{sec:calculations}):

\begin{enumerate} [$(i)$]
\item $(1-\varepsilon/4)\sigma_{\eta_k}\sqrt{2\log\log (a^{k+1}-a^{k})}\geq (1-\varepsilon/2)\sigma_{a^{k+1}}\sqrt{2\log\log
a^{k+1}},$ and

\item $(1+\varepsilon)\sigma_{a^{k}}\sqrt{2\log\log a^{k}}\leq
(\varepsilon/4)\sigma_{a^{k+1}}\sqrt{2\log\log a^{k+1}},$ and

\item $(1+\varepsilon)\widetilde{\sigma}_{k}\sqrt{2\log\log a^{k+1}}\leq
(\varepsilon/4)\sigma_{a^{k+1}}\sqrt{2\log\log a^{k+1}}.$

\end{enumerate}

All in all, combining the above mentioned estimates and
$(i)$-$(iii)$ we conclude

\begin{align*}
X_{a^{k+1}}-\mu_{a^{k+1}}&=(\eta_k-\mu_{\eta_k})+(X_{a^{k}}-\mu_{a^{k}})+(\zeta_{k}-\widetilde{\mu}_{k})\\
&\geq
(1-\varepsilon/4)\sigma_{\eta_k}\sqrt{2\log\log(a^{k+1}-a^{k})}-(1+\varepsilon)\sigma_{a^{k}}\sqrt{2\log\log
a^{k}}-(1+\varepsilon)\widetilde{\sigma}_{k}\sqrt{2\log\log
a^{k+1}}\\
&\geq (1-\varepsilon/2)\sigma_{a^{k+1}}\sqrt{2\log\log
a^{k+1}}-(\varepsilon/2)\cdot\sigma_{a^{k+1}}\sqrt{2\log\log a^{k+1}}\\
&\geq (1-\varepsilon)\sigma_{a^{k+1}}\sqrt{2\log\log a^{k+1}},
\end{align*}
as desired. This completes the proof.
\end{proof}

\subsection{Relevant estimates for the variances appearing in the proof of theorem \ref{main1}}\label{sec:calculations} In this section we verify \eqref{lowerbound:1}, $(ii)$ and $(iii)$, by estimating the relevant variances.
Before doing so, recall that
\begin{align*}
Var(X_1+\ldots+X_n)=\sum_{i=1}^nVar(X_i)+\sum_{i\neq j}Cov(X_i,X_j),
\end{align*}
where $Cov(X,Y)=\mathbb{E}XY-\mathbb{E}X\mathbb{E}Y$. Moreover, note
that whenever $X$ and $Y$ are independent, then $Cov(X,Y)=0$.
Therefore, given a subset $\mathcal S\subseteq \mathcal H$, it
follows that
\begin{align}\label{sumVar}
Var(X_{\mathcal S})&=\sum_{t\in \mathcal S}Var(\xi_t)+\sum_{t\neq
s \textrm{ and } E(t)\cap E(s)\neq \emptyset}Cov(\xi_t,\xi_s)
\nonumber \\
&=\sum_{t\in \mathcal
S}Var(\xi_t)+\sum_{i=1}^{\ell-1}\sum_{|E(t)\cap
E(s)|=i}Cov(\xi_t,\xi_s).
\end{align}

In addition, recall that each of the $\xi_t$'s is an indicator
random variable for an appearance of a certain copy of $H$ (where
$|V(H)|=\ell$ and $|E(H)|=m$), and therefore we have
\begin{align*}
\mathbb{E}\xi_t=p^m \textrm{ and } Var(\xi_t)=p^m(1-p^m)=p^m-p^{2m}.
\end{align*}

Next, recall that $p$ and $\ell:=|V(H)|$ are fixed constants and
that we always assume $a$ and $k$ to be large enough. In particular,
it easy to see that the (asymptotically) largest element in the right
hand side of \eqref{sumVar} is the case $i=1$.

Now we can give some easy estimates.

{\bf Estimating $\sigma^2_n:=Var(X_n)$:} Recall that $X_n$ is a
sum of indicator random variables for all the (labeled) copies of
$H$ in $K_n$. Therefore, there exists a constant $C$ (which depend
of the number of automorphisms which preserve some edge) such that
the number of pairs $(s,t)$ of copies of $H$ which intersect in
exactly one edge is roughly $(1+o(1))Cn^{2\ell-2}$. Therefore,
running over all possible intersection edges we obtain that
\begin{align}\label{varXin}
\sigma^2_n&=(1+o(1))Cn^{2\ell}(p^{2m-1}-p^{2m}).
\end{align}

Now, note that since
\begin{align*}
  \sigma^2_{a^{k+1}-a^k}&=(1+o(1))C(a^{k+1}-a^k)^{2\ell}(p^{2m-1}-p^{2m})\\
  &=(1+o(1))C(a^k(a-1))^{2\ell}(p^{2m-1}-p^{2m}),
\end{align*}
by taking $a$ to be sufficiently large we obtain that
$$\sigma^2_{a^{k+1}-a^k}=(1+o(1))Ca^{2\ell(k+1)}(p^{2m-1}-p^m)$$
which is of the same order of magnitude as $\sigma^2_{a^{k+1}}$.
This verifies  \eqref{lowerbound:1}.

In order to verify $(ii)$ all we need is to note that the quantity
$\sigma^2_{a^{k+1}}/\sigma^2_{a^k}$ is a function that tends to
infinity whenever $a$ does.

Finally, in order to verify $(iii)$ let us first estimate
$\widetilde{\sigma}^2_k$.

{\bf Estimating $\widetilde{\sigma}^2_{k}:=Var(\zeta_k)$:} Let $k\in
\mathbb{N}$ and $a>0$. Recall that $\zeta_k$ counts the number of
copies of $H$ with vertices from both $\{a^k+1,\ldots,a^{k+1}\}$ and
$[a^k]$. In this case, assuming $a$ goes to infinity, it is easy to
see that the largest summand in \ref{sumVar} is obtained whenever
the intersection edge is between $[a^k]$ and
$\{a^k+1,\ldots,a^{k+1}\}$. Therefore, for some constant $C'$ (which
does not depend on $a$) we obtain

\begin{align}\label{varZeta}
\widetilde{\sigma}^2_k&=(1+o(1))C'a^k(a^{k+1}-a^k)\left(a^{k+1}\right)^{2\ell-2}(p^{2m-1}-p^{2m})\nonumber\\
&=(1+o(1))C'\frac{1}{a}\left(a^{k+1}\right)^{2\ell}(p^{2m-1}-p^{2m}).
\end{align}

Note that by \eqref{varZeta} and \eqref{varXin} it follows that
$\widetilde{\sigma}^2_k=\Theta\left(\frac 1a
\sigma^2_{a^{k+1}}\right)$, and therefore, by taking $a$ to be
sufficiently large, $(iii)$ trivially holds.

\section{Proof of Theorem \ref{thm:main}}\label{perfect matchings}
Throughout the next section we are going to let $X_{n,m}$ be the number of perfect matchings in $B(n,m)$, $X_n$ the number of perfect matchings  in $B(n,p)$ and $Y_n:=\log X_n$. We aim to prove:
\[
\Pr  \left[\limsup_{n\rightarrow \infty}\frac{ Y_n- \log (n! p^n ) + \frac{1-p}{2p}  }{ \sqrt{2\log \log n}  \sqrt { \frac{1-p}{p}} }=1\right]=1
\]It will be enough to show that for
$\varepsilon>0$ we have both the upper bound
\begin{align}\Pr\left[\frac{ Y_n- \log (n! p^n ) + \frac{1-p}{2p}  }{  \sqrt { \frac{1-p}{p}} } \geq (1+\varepsilon)\sqrt{2\log\log n^2} \textrm{ for infinitely
many }n\right]=0,\nonumber
\end{align}
and the lower bound
\begin{align}\Pr\left[\frac{ Y_n- \log (n! p^n ) + \frac{1-p}{2p}  }{  \sqrt { \frac{1-p}{p}} } \geq (1-\varepsilon)\sqrt{2\log\log n^2} \textrm{ for infinitely
many }n\right]=1.\nonumber
\end{align}Note that in the equations above we have $\log\log{n^2}$, but those can be replaced by $\log\log n$ since the two quantities are asymptotically equal.
\subsection{Upper Bound}\label{upper bound section pm}

We need to prove that for  any fixed  $\varepsilon>0$

\begin{equation} \label{upper pm}
\Pr  \left[\frac{\log X_n-\log  (n! p^n)+\frac{1-p}{2p} }{ \sqrt { \frac{1-p}{p} }}\geq (1+\varepsilon)\sqrt{2 \log \log n^2}\text{ for infinite many }n\right]=0. \end{equation}

\noindent By Corollary \ref{concentration0}, there is a constant $K$ such that  for all $ \frac{p}{2} n^2 \le  m  \le \frac{1+p}{2} n^2 $

$$   X_{n,m}  \le K \mathbb{E}[X_{n,m}]  $$ with probability at least $1 -n^{-4} $. Taking $\log$,  we conclude  that with the same probability

\begin{equation} \label{upper pm1}
\log X_{n,m} \le \log \mathbb{E}[X_{n,m}] + \log K. \end{equation}

\noindent We use the following approximation of the expected value,
\[
\mathbb{E}[X_{n,m}]=n!p_m^n\exp\left(-\frac{1-p_m}{2p_m}+O(1/n)\right)
\]
\noindent (where  $p_m := \frac{m}{n^2}$). The calculation for which can be found in the Appendix. This yields,
$$\log \mathbb{E}[ X_{n,m}]  =\log (n! p_m^n) -  \frac{1-p_m}{2p_m } + o(1), $$The  RHS can be written as

$$ \log(n!)+n\log\frac{m}{n^2}-\frac{n^2}{2}\left(\frac{1}{m}-\frac{1}{n^2}\right)+ o(1). $$

\noindent Let $E_n$ be the random variable that counts the number of edges in $B(n,p)$. By  conditioning on $E_n=m$ and using the union bound
(over the range $\frac{p}{2}n^2 \le m \le \frac{1+p}{2} n^2$), we can conclude that  with probability at least  $1 -n^{-2} $

$$ \mathbb{I}_{\mathcal E}  \log X_n \le  \mathbb{I}_{\mathcal E}  \Big( \log(n!)+n\log\frac{E_n}{n^2}-\frac{n^2}{2}\left(\frac{1}{E_n}-\frac{1}{n^2} \right) + \log K +  o(1) \Big), $$

\noindent where $X_n$ denotes  the number of perfect matchings in $B(n,p)$,  and $\mathbb{I}_{\mathcal E} $ is the indicator of the event $\mathcal E$ that
$B(n,p)$ has at least $\frac{p}{2} n^2$ and at most $\frac{1+ p}{2} n^2 $ edges. By Chernoff's bound, $\mathbb{I}_{\mathcal E} = 1$ with probability at least $1 -n^{-2} $. By the union bound

\begin{align} \label{upper0}  \log X_n \le \Big( \log(n!)+n\log\frac{E_n}{n^2}-\frac{n^2}{2}\left(\frac{1}{E_n}-\frac{1}{n^2}\right) + O(1) \Big),
\end{align}  with probability at least $1 - 2n^{-2} $. Then,
\begin{align}
\log \frac{E_n}{n^2}&=\log \left(\frac{\sqrt{Var(E_n)}E_n^*}{n^2}+\frac{\mathbb{E}[ E_n]}{n^2}\right)\nonumber\\
&=\log\left(\left(\frac{p(1-p)}{n^2}\right)^{1/2}E_n^*+p\right)\nonumber\\
&=\log\left(p\left(\frac{1-p}{p}\right)^{1/2}\frac{E_n^*}{n}+p\right)\nonumber\\
&=\log p+\log \left(1+\left(\frac{1-p}{p}\right)^{1/2}\frac{E_n^*}{n}\right)\nonumber\\
&=\log p+\left(\frac{1-p}{p}\right)^{1/2}\frac{E_n^*}{n}+O(1/n^2).  \nonumber
\end{align}

\noindent Plugging the last estimate into  \eqref{upper0} we obtain,  with the same probability
\
\[
\log X_n \le \log (n! p^n) +\left(\frac{1-p}{p}\right)^{1/2}E_n^*-\frac{n^2}{2}\left(\frac{1}{E_n}-\frac{1}{n^2p}-\frac{p-1}{n^2p}\right) + O(1). \nonumber
\]
\noindent Note that with probability at least $1-n^{-2}$ we have $E_n=n^2p+O(n\log^2 n)$, in which case $\frac{n^2}{2}\left(\frac{1}{E_n}-\frac{1}{n^2p}\right)$ becomes $o(1)$. Thus, with probability at least $1-3n^{-2}$ we obtain
\begin{align}\label{comparison}
\frac{\log X_n-\log  (n! p^n)+\frac{1-p}{2p} }{ \sqrt { \frac{1-p}{p} }}\leq E_n^*+ O(1).
\end{align}

Since $\sum _n n^{-2} < \infty$, we have, by the Borell-Cantelli lemma that the event in \eqref{comparison} holds with probability 1 for all sufficiently large $n$.
On the other hand, by the Kolmogorov-Khinchin theorem,  $E_n^*$ satisfies LIL and thus
\[
E_n^*\leq (1+\varepsilon/2)\sqrt{2\log\log n^2}
\]happens with probability $1$ for all sufficiently large  $n$. For all sufficiently large $n$, $ (\varepsilon/2)\sqrt{2\log\log n^2 }$ is larger than the error term
$O(1)$, and we have
\[
\frac{\log X_n-\log (n! p^n)+\frac{1-p}{2p} }{\sqrt {\frac{1-p}{p}}}   \leq (1+\varepsilon)\sqrt{2\log\log n^2 },
\] proving equation (\ref{upper pm}).

\subsection{Proof of the Lower bound}

For the lower bound we need to show that there exists a sequence $n_k, k=1, 2 \dots$ of indices such that with probability 1,

\[
\frac{\log X_{n_k} -\log (n_k ! p^{n_k} )+\frac{1-p}{2p} }{\sqrt {\frac{1-p}{p}}}   \ge (1- \varepsilon)\sqrt{2\log\log n_k^2 },
\]

\noindent holds for infinitely many $k$.

Let $C>0$ be a constant. By the proof of \cite[Theorem 15]{Janson}, we know
\begin{align}\label{lower bound eqtn}
E_n^*-\frac{\log X_{n} -\log (n ! p^{n} )+\frac{1-p}{2p} }{\sqrt {\frac{1-p}{p}}}>C
\end{align}happens with probability $O(1/n)$, and $E_n^*$ is as in the last section. From  the standard proof of LIL for the sum of iid random variables \cite{Khinchin, Kolmogorov}, we see that there is a sequence $\{n_k\}:=\{c^k\}$ (where $c$ is an integer larger than 1) for which we have:
\[
E_{n_k}^*\geq (1-\varepsilon/2)\sqrt{2\log \log n_k^2}
\]
happens infinitely often with probability one. Restricting ourselves to this subsequence and denoting by $A_k$ the event that $\eqref{lower bound eqtn}$ holds for $n_k$, we have
\[
\Pr [A_k]=O(1/c^k)
\]so in particular we have
\[
\sum_{k} \Pr[A_k] <\infty
\] By  Borel-Cantelli lemma, we have that with probability equal to $1$, for all large $k$:
\[
E_{n_k}^*-C\leq \frac{\log X_{n_k} -\log (n_k ! p^{n} )+\frac{1-p}{2p} }{\sqrt {\frac{1-p}{p}}}
\]
\noindent Let $k$ be large enough so that $C<(\varepsilon/2)\sqrt{2\log \log n_k^2}$. Then, with probability equal to $1$ we have that for infinite many $k$:
\[
(1-\varepsilon)\sqrt{2\log \log n_k}\leq \frac{\log X_{n_k} -\log (n_k ! p^{n_k} )+\frac{1-p}{2p} }{\sqrt {\frac{1-p}{p}}}
\]just as desired.
\section{Proof of Theorem \ref{thm:main2}}\label{sec: main2}

Throughout the next section we are going to let $X_{n,m}$ be the number of Hamilton cycles in $G(n,m)$, $X_n$ the number of Hamilton cycles in $G(n,p)$ and $Y_n:=\log X_n$. The structure of the proof is identical to the one done for theorem \eqref{thm:main}, so we omit some of the calculations. We aim to prove:
\[
\Pr\left[\limsup_{n\rightarrow\infty}\frac{Y_n-\log \mathbb{E}[X_n]+\frac{1-p}{p}}{\sqrt{\frac{2(1-p)}{p}}\sqrt{2\log \log {n}}}=1\right]=1
\]It will be enough to show that
$\varepsilon>0$ we have both the upper bound
\begin{align}\Pr\left[\frac{\log X_n-\log \mathbb{E}[X_n]+\frac{1-p}{p}}{\sqrt{\frac{2(1-p)}{p}}} \geq (1+\varepsilon)\sqrt{2\log\log {n\choose 2}} \textrm{ for infinitely
many }n\right]=0,\nonumber
\end{align}
and the lower bound
\begin{align}\Pr\left[\frac{\log X_n-\log \mathbb{E}[X_n]+\frac{1-p}{p}}{{\sqrt{\frac{2(1-p)}{p}}}} \geq (1-\varepsilon)\sqrt{2\log\log {n\choose 2}} \textrm{ for infinitely
many }n\right]=1.\nonumber
\end{align}Note that in the equations above we have $\log\log{n\choose 2}$, but those can be replaced by $\log\log n$ since the two quantities are asymptotically equal.

\subsection{Proof of upper bound}\label{sec:ub hamilton}

Let $\varepsilon>0$, and let $N:=(n-1)!/2$ be the number of Hamilton cycles in the complete graph $K_n$. With this notation one has,
\begin{align}\label{ey}
  \mathbb{E}[X_{n,m}]=Np_m^n\exp\left(-\frac{n^2}{2m}(1-p_m)+o(1)\right).
\end{align}where in this section $p_m:=m/{n\choose 2}$. For a proof of \eqref{ey}, the reader can check the Appendix.
By using corollary \ref{concentration}, we have
\[
X_{n,m}\leq K \mathbb{E}[X_{n,m}]
\]
with probability at least $1-n^{-4}$.

Applying the log function and using estimate (\ref{ey}) we obtain
\begin{align}\label{logy}
\log X_{n,m}\leq \log K+\log N+n\log \frac{m}{{n\choose 2}}-\frac{n^2}{2}\left(\frac{1}{m}-\frac{1}{{n\choose 2}}\right)+o(1).
\end{align}Let $E_n$ be the random variable which counts the number of edges in $G\sim G(n,p)$, by conditioning on $E_n=m$ and using union bound (over the range $\frac{p}{2}{n\choose 2}\leq m\leq \frac{1+p}{2}{n\choose 2}$), with probability at least $1-n^{-2}$ we have
\begin{align}
\mathbb{I}_{\mathcal{E}}\log X_n\leq \mathbb{I}_{\mathcal{E}}\left(\log K+\log N+n\log \frac{E_n}{{n\choose 2}}-\frac{n^2}{2}\left(\frac{1}{E_n}-\frac{1}{{n\choose 2}}\right)+o(1)\right)
\end{align}Where now we use $X_n$ (number of Hamilton cycles in $ G(n,p)$) and $\mathbb{I}_{\mathcal{E}}$ is the indicator random variable that the number of edges in $G(n,p)$ is in the range $[\frac{p}{2}{n\choose 2},\frac{1+p}{2}{n\choose 2}]$. By Chernoff's bound, $\mathbb{I}_{\mathcal{E}}=1$ with probability at least $1-n^{-2}$. Hence, by the union bound we have
\begin{align}\label{logx}
\log X_n\leq \log K+\log N+n\log \frac{E_n}{{n\choose 2}}-\frac{n^2}{2}\left(\frac{1}{E_n}-\frac{1}{{n\choose 2}}\right)+o(1)
\end{align}with probability at least $1-2n^{-2}$. By a similar calculation to the one done in section \eqref{upper bound section pm}, we get
\begin{align}
\log\frac{E_n}{{n\choose 2}}&=\log p+\left(\frac{1-p}{{n\choose 2}p}\right)^{1/2}E_n^*+O(1/n^2).\nonumber
\end{align}
Plugging it into \eqref{logx}, we obtain that

\begin{align}
\log X_n&\leq \log K+\log N+n\left(\log p+\left(\frac{1-p}{{n\choose 2}p}\right)^{1/2}E_n^*\right)-\frac{n^2}{2}\left(\frac{1}{E_n}-\frac{1}{{n\choose 2}}\right)+o(1)\nonumber \\
&=\log \mathbb{E}[X_n]+\left(\frac{2(1-p)}{p}\right)^{1/2}E_n^*-\frac{n^2}{2}\left(\frac{1}{E_n}-\frac{1}{{n\choose 2}p}-\frac{p-1}{{n\choose 2}p}\right)+O(1).\nonumber
\end{align}

Note that since with probability $1-o(1/n^2)$ we have that (say) $E_n=m+\Theta(n\log^2 n)$, it follows that  $\frac{n^2}{2}\left(\frac{1}{E_n}-\frac{1}{{n\choose 2}p}\right)=o(1)$ without affecting the error probability.

All in all, with probability $1-O(1/n^2)$ we have
\begin{align}\label{ineq}
\frac{\log X_n-\log\mathbb{E}[X_n] +\frac{1-p}{p}}{\sqrt{\frac{2(1-p)}{p}}}\leq E_n^*+O(1).
\end{align}
By the Borel-Cantelli lemma, we see that for large $n$, with probability one, equation $\eqref{ineq}$ holds. Since $E_n^*$ satisfies LIL, we can upper bound the RHS of of $\eqref{ineq}$ by $(1+\varepsilon)\sqrt{2\log \log {n\choose 2}}$ for large $n$ with probability one. All in all,
\[
\frac{\log X_n-\log\mathbb{E}[X_n] +\frac{1-p}{p}}{\sqrt{\frac{2(1-p)}{p}}}\leq(1+\varepsilon)\sqrt{2\log \log {n\choose 2}}
\]holds for all large $n$ with probability one, which proves the upper bound.
\subsection{Proof of lower bound}\label{sec:lb hamilton}
Recall that in order to prove the lower bound one needs to show that for every $\varepsilon>0$ we have
\begin{align}\label{lb}
    \Pr\left[\frac{\log X_n-\log \mathbb{E}[X_n]+\frac{1-p}{p}}{{\sqrt{\frac{2(1-p)}{p}}}}\geq (1-\varepsilon)\sqrt{2\log \log {n\choose 2}} \text{ for infinite many }n\right]=1
\end{align}By the proof of [\cite{Janson},Theorem 1] we have that for any fixed constant $C>0$:
\[
\Pr\left[E_n^*-\frac{\log X_{n} -\log \mathbb{E}[X_n]+\frac{1-p}{p} }{\sqrt{\frac{2(1-p)}{p}}}>C\right]=O(1/n)
\]
By repeating the idea of the lower bound on theorem \eqref{main1}, we obtain
\[
\frac{\log X_n-\log \mathbb{E}[X_n]+\frac{1-p}{p}}{\sqrt{\frac{2(1-p)}{p}}}\geq (1-\varepsilon)\sqrt{2\log \log {n\choose 2}}\]holds for infinite many $n$ with probability $1$, which proves \eqref{lb}.

\section{Proof of Theorem \ref{main:CLT}}\label{sec:clt hypergraphs}

\begin{proof}In this section we will be working with loose Hamilton cycles in random hypergraphs $H^k(n,p)$. Note that we require that $m:=n/(k-1)$ is an integer (which shall denote the number of edges of a Hamilton cycle). Thus, we will assume the divisibility condition $k-1\mid n$ throughout the rest of the section. Let $\mathcal H$ be the set of all Hamilton cycles in the complete $k$-uniform hypergraph on $n$ vertices. Then,
\begin{align}\label{size}
|\mathcal H|=\frac{n!}{2m((k-2)!)^m}
\end{align}Indeed, there are $n!$ ways to label the vertices consecutively (and the edges are determined trivially, including the one edge which goes back to the beginning of the labeling). In each of the $m$ edges, for the ``non-overlapping" vertices (there are $k-2$ such vertices), the order is not important. Therefore, one should divide by $(k-2)!^m$. Finally, note that each Hamilton cycle can be obtained in $2m$ ways ($m$ ``overlapping vertices" to be placed as vertex number $1$, and two isomorphic ways to label the vertices consecutively).

Now we are ready to prove Theorem \ref{main:CLT}. Let $E_n$ denote the number of edges of $H^k(n,p)$, and $X_n(k):=X_n$ be the number of Hamilton cycles of $H^k(n,p)$. The idea of the proof is to compare $X_n$ to $E_n$. Specifically, we want to show that
\begin{align}
    \label{distance}
\mathbb{E}[|X_n^*-E_n^*|^2]
\end{align}goes to zero. Since clearly $E_n^*$ converges to $N(0,1)$, the theorem will follow.

To this end we will show that $X_n^*$ and $E_n^*$ are almost perfectly linearly correlated. Meaning that $Cov(X_n^*,E_n^*)\rightarrow 1$. Recall that
\begin{align}
    \label{covariance}
    Cov(X^*,E^*)=\frac{\mathbb{E}[X_nE_n]-\mathbb{E}[X_n]\mathbb{E}[E_n]}{\sqrt{Var(X_n)Var(E_n)}}.
\end{align}
Let $X_H$ be the event ``$H$ appears in $H^k(n,p)$". Hence,
\[
\mathbb{E}[X_H]=p^m
\]
Let $N:=\frac{n!}{2m((k-2)!)^m}$ (that is, $N=|\mathcal H|$), and by linearity of expectation, we have:
\[
\mathbb{E}[X_n]=Np^{m}
\] Also, since $E_n\sim Bi({n\choose k},p)$ we have $Var(E_n)={n\choose k} p(1-p)$ and $\mathbb{E}[E_n]={n\choose k} p$. We compute the missing quantities. Denote by $\mathcal E$ the set of edges in the complete $k$-uniform hypergraph, and denote by $E_e$ the event ``The edge $e$ appears in $H^k(n,p)$". Then,
\[
\mathbb{E}[X_nE_n]=\sum_{H\in\mathcal H,e\in\mathcal E} \mathbb{E}[X_H\cdot E_e]
\]By symmetry, by fixing one Hamilton cycle $H\in \mathcal H$, we have:
\[
\mathbb{E}[X_nE_n]=N\left(\sum_{e\in\mathcal E} p^{|H\cup e|}\right)=N\left(\left( {n\choose k} - m\right) p^{m+1}+mp^{m}\right)
\]Hence, $\mathbb{E}[X_nE_n]=\mathbb{E}[X_n](\mathbb{E}[E_n]+m(1-p))$, and we get $Cov(X_n,E_n)=\mathbb{E}[X_n](m(1-p))$. Lastly, we compute the variance of $X_n$.
\[
\mathbb{E}[X_n^2]=\sum_{H_1,H_2\in\mathcal H} p^{|H_1\cup H_2|}
\]Again, by fixing an arbitrary Hamilton cycle $H$, we get
\[
\mathbb{E}[X_n^2]=N\left(\sum_{H_1}p^{|H\cup H_1|}\right)
\]Let $N(a)$ be the number of Hamilton cycles that intersect $H$ in exactly $a$ edges. With this notation,
\[
\mathbb{E}[X_n^2]=N\left(\sum_{a= 0}^{m} N(a) p^{2m-a}\right)
\]Let $\alpha_a:=N(a)/N$. Then,
\[
\mathbb{E}[X_n^2]=N^2p^{2m}\left(\sum_{a=0}^{m} \alpha_a p^{-a}\right)
\]Hence,
\[
Var(X_n)=(\mathbb{E}[X_n])^2\left(-1+\left(\sum_{a= 0}^{m} \alpha_a p^{-a}\right)\right):=(\mathbb{E}[X_n])^2f(n)
\]Plugging back into \eqref{covariance}:
\begin{align}\label{cov}
Cov(X_n^*,E_n^*)=\frac{Cov(X_n,E_n)}{\sqrt{Var(X_n)Var(E_n)}}=\frac{\mathbb{E}[X_n](m)(1-p)}{\sqrt{(\mathbb{E}[X_n])^2f(n){n\choose k}p(1-p)}}=\frac{m(1-p)}{\sqrt{{n\choose k}p(1-p)f(n)}}
\end{align}Writing out $f(n)$:
\[
f(n)=(\alpha_0-1)+\frac{\alpha_1}{p}+\frac{\alpha_2}{p^2}+...=\alpha_1\left(\frac{1}{p}-1\right)+\alpha_2\left(\frac{1}{p^2}-1\right)+\ldots+\alpha_{m}\left(\frac{1}{p^{m}}-1\right)
\]Hence,
\[
f(n)\leq \alpha_1\left(\frac{1}{p}-1\right)+\sum_{t=2}^m\frac{\alpha_t}{p^t}
\]We are going to show that the sum is negligible compared to the first summand. First of all, note that $\alpha_1\leq m^2/{n\choose k}$ by a simple union bound. In general, to bound $\alpha_t$, we pick the $t$ edges from $H$ we are going to intersect. There are ${m\choose t}$ ways to do so. Next, collapse each one of those edges into a single vertex. Thus, we now have $n-t(k-1)$ vertices. Note that the number of vertices is still divisible by $k-1$, as it should be the case. Next, we form a Hamilton cycle on these vertices. There are
\[
\frac{(n-t(k-1))!}{2(m-t)((k-2)!)^{m-t}}
\]ways to do so. In order to see this, just note that we replace $n$ by $n-t(k-1)$ and $m$ by $m-t$ in equation (\ref{size}). Lastly, once the Hamilton cycle has been formed, we can uncollapse each one of the $t$ edges, so we obtain an extra factor of $(k!)^t$. Hence,
\begin{align}
\alpha_t&\leq \frac{1}{N}\cdot{m\choose t}\frac{(n-t(k-1))!(k!)^t}{2(m-t)((k-2)!)^{m-t}}\nonumber\\
&=\frac{(m)_t(k!)^tm((k-2)!)^t}{t!(m-t)(n)_{(k-1)t}}\nonumber\\
&=\frac{m(m)_tC^t}{(m-t)t!(n)_{(k-1)t}}
\end{align}for a constant $C$ depending on $k$. Plugging back on $f(n)$ we get:
\[
f(n)\leq \frac{m^2}{{n\choose k}}\left(\frac{1-p}{p}\right)+\sum_{t=2}^m\frac{m(m)_tC^t}{p^t(m-t)t!(n)_{(k-1)t}}
\]To handle the summation, we are going to split it into two sums:
\[
\sum_{t=2}^{\log n}\frac{m(m)_tC^t}{p^t(m-t)t!(n)_{(k-1)t}}+\sum_{t>\log n}^m\frac{m(m)_tC^t}{p^t(m-t)t!(n)_{(k-1)t}}:=S_1+S_2
\]Note that in the range $2\leq t\leq \log n$, we have by lemma \ref{lower factorial}:
\begin{itemize}
\item $(m)_t=m^t(1+o(1))$,
\item $(n)_{(k-1)t}=(1+o(1))n^{(k-1)t}$, and
\item $m/(m-t)\leq 2$.
\end{itemize}Hence,
\begin{align}
S_1&\leq(1+o(1))\sum_{t=2}^{\log n}\frac{2m^tC^t}{n^{(k-1)t}p^tt!}=O\left(\frac{m^2}{n^{2(k-1)}}\right)=O\left(\frac{1}{n^{2(k-2)}}\right)
\end{align}For $S_2$, we can upper bound $m/(m-t)\leq n$, and $(m)_t/(n)_{(k-1)t}\leq 1$ to obtain:
\[
S_2\leq \sum_{t>\log n}^m\frac{nC^t}{p^tt!}\leq \frac{n^2C^{\log n}}{p^{\log n}(\log n)!}=o\left(\frac{1}{n^{2(k-2)}}\right)
\]using this in the definition of $f$ we obtain:
\begin{align}
f(n)&\leq \frac{m^2}{{n\choose k}}\left(\frac{1-p}{p}\right)+O\left(\frac{1}{n^{2(k-2)}}\right)\nonumber\\
&=\frac{m^2}{{n\choose k}}\left(\frac{1-p}{p}\right)\left(1+O\left(n^{2-k}\right)\right)
\end{align}Thus,
\[
\frac{1}{\sqrt{1+O\left(n^{2-k}\right)}}\leq Cov(X_n^*,E_n^*)\leq 1
\]where the second inequality is just from Cauchy Schwarz. Then we have that the lower bound is:
\[
\frac{1}{\sqrt{1+O\left(n^{2-k}\right)}}
\]which we can re-write using a Taylor expansion as:
\begin{align}
    \label{convergence}
    1-O(n^{2-k})
\end{align}Hence, expanding \eqref{distance} and using \eqref{convergence} we have:
\[
\mathbb{E}[|X_n^*-E_n^*|^2]=\mathbb{E}[(X_n^*)^2]+\mathbb{E}[(E_n^*)^2]-2Cov(X_n^*,E_n^*)=2-2(1-O(n^{2-k})=O(n^{2-k})
\]
Hence, when $k\geq 3$, we have that the above tends to zero. This completes the proof of Theorem \ref{main:CLT}.
\end{proof}

\section{Proof of Theorem \ref{main:ILL}}\label{sec: ill hypergraph}

\begin{proof} Now we are going to use Theorem \ref{main:CLT} to derive LIL for $X_n^*$. First we note that since $E$ is the summation of ${n\choose k}$ i.i.d. random variables, then we have that $E_n^*$ obeys the LIL. That is,
\[
E_n^*\leq (1+\varepsilon/2)\sqrt{2\log\log {n}}
\]with probability $1$ for large enough $n$ and with probability 1 we also have
\[
E_n^*\geq (1-\varepsilon/2)\sqrt{2\log \log n}
\] infinitely often. Note that we write $\log\log n$ instead of $\log \log {n\choose k}$, which holds because they are asymptotically equal (as $k$ is fixed). Furthermore,
\[
\Pr\left(|X_n^*-E_n^*|\geq t)\leq \Pr(|X_n^*-E_n^*|^2\geq t^2\right)\leq \frac{\mathbb{E}[(X_n^*-E_n^*)^2]}{t^2}=O\left(\frac{1}{t^2n^{k-2}}\right)
\]let $t=(\varepsilon/2)\sqrt{2\log\log n}$. We obtain:
\begin{align}\label{errorterm}
\Pr\left(|X_n^*-E_n^*|\geq (\varepsilon/2)\sqrt{2\log\log n}\right)\leq O\left(\frac{1}{n^{k-2}\log\log n}\right)
\end{align}if $k\geq 4$, then we have:
\[
\sum_{n} \Pr(|X_n^*-E_n^*|\geq (\varepsilon/2)\sqrt{2\log\log n}) <\infty
\]and by the Borel-Cantelli Lemma we have that with probability 1, only finite many of those events can happen. That is, with probability 1 we have $|X_n^*-E_n^*|<(\varepsilon/2)\sqrt{2\log\log n}$ for all $n$ sufficiently large. Hence, with probability one, for infinitely many $n$ we have:
\[
(1-\varepsilon)\sqrt{2\log \log n}\leq X_n^*\leq (1+\varepsilon)\sqrt{2\log \log n}
\]Hence, we obtain the Law of Iterated Logarithm for Hamilton cycles provided that $k\geq 4$.

\end{proof}

\section{Upper-tail Estimates}\label{upper tail estimates}
In this section we present new upper-tail estimates needed in the proofs of Theorems \ref{thm:main} and \ref{thm:main2}.

\subsection{Proof of Lemma \ref{conc0}}

We denote by $K_{n,n}$ the complete bipartite graph and let $\mathcal P$ denote the set of all perfect matchings in $K_{n,n}$. Clearly, we have

$$| \mathcal P |  = n! . $$

For each $P\in \mathcal P$, let $X_P$ to denote the indicator random variable for the event ``$P$ appears in $ B(n,m)$".
 It is easy to see that

  \begin{equation} \label{formula1}  \mathbb{E}[ X_P] =\frac{(m)_n}{(n^2)_n},  \end{equation}and

\begin{equation} \label{Xexpectation} \mathbb{E}[ X_{n,m}] = n!  \frac{ (m)_n} { (n^2) _n }= n!p_m^n\left( - \frac{1 -p_m}{p_m } +O(1/n) \right)
\end{equation}

\noindent where $p_m:=\frac{m}{n^2}$. For the calculation of equation \eqref{Xexpectation}, see the Appendix. In general, for any fixed bipartite graph
  $H$ with $h$ edges, the probability that  $B(n,m)$ contains $H$ is precisely

$$   \frac{(m)_{h}}{{(n^2)}_{h}}.$$

 Thinking of $H$ as the (simple) graph formed by the union of perfect matchings $P_1, \dots, P_k$, observing that $X_H=X_{P_1}\cdots X_{P_k}$, we obtain that

\begin{equation}   \label{rearranged}
  \mathbb{E} [X_{n,m}^k] = \sum_{P_1,...,P_k\in \mathcal P} \mathbb E [X_{P_1} \dots X_{P_k}] = \sum_{a=0}^{(k-1)n}M(a)\frac{(m)_{kn-a}}{{(n^2)}_{kn-a}},
\end{equation}
where $M(a)$ is the number of (ordered) $k$-tuples
$(P_1,...,P_k)\in \mathcal P^k$, whose union contains exactly $kn-a$ edges. Our main task is to bound $M(a)$ from above.

 Fix $a$ and let $\mathcal L:=\mathcal L(a)$ be the set of all  sequences  $L:= \ell_2,\ldots,\ell_k$   of non-negative integers where  $$ \ell_2 + \dots + \ell_k = a. $$   For each sequence $L = \ell_2, \dots, \ell_k$,  let  $N_{L} $ be the number of $k$-tuples  $(P_1,\ldots,P_k)$ such that
 for every $2 \le t \le k$,  we have  $|P_t \cap (\cup_{j<t}P_j)|=\ell_t$. Clearly, we have

 $$M(a)  =\sum _{ L \in \mathcal L } N_L . $$

 We construct a $k$-tuple in $N_L$ according to the following algorithm:

 \begin{itemize}
\item Let $P_1$ be an arbitrary perfect matching.

\item Suppose that $P_1,\ldots,P_{t-1}$ are given, our aim is to  construct $P_t$. Pick $\ell_t$ edges to be in $P_t \cap \cup_{j=1}^{t-1}P_j$ as follows: first, pick a subset $B_{1,t}$ of $\ell_t$ vertices from the first color class (say $V_1$). Next,
 from each vertex pick an edge which appears in
$\cup_{j=1}^{t-1}P_j$ so that the chosen edges form a matching. Let us denote the obtained partial matching by $E_t$, and observe that $|E_t|=\ell_t$, and that $B_{2,t}:=\left(\cup E_t\right)\cap V_2$ is a set of size $\ell_t$ (where $V_2$ denotes the second color class).

\item  Find a perfect matching $M_t$ between $V_1 \backslash B_{1,t} $ and $V_2 \backslash B_{2,t} $ which has an empty intersection with $\cup_{j=1}^{t-1} P_j$, and set $P_t:= E_t \cup M_t $.
\end{itemize}

Next, we wish to analyze the algorithm. There are $n!$ ways to choose $P_1$. Having chosen $P_1,\ldots,P_{t-1}$, there are ${n \choose {\ell_t} }$ ways to choose $B_{1,t}$.  Each vertex in $B_{1,t}$ has at most $t-1$ different edges in $\cup_{j=1}^{t-1}P_j$. Thus, the number of ways to choose
$E_t$ is at most $(t-1)^{\ell_t} $.  Moreover, once $B_{1,t}$ and $B_{2,t} $ are defined, the number of ways to choose $M_t$ is at most  $(n-\ell_t) !$. This way, we obtain

$$N_L \le n !  \prod_ {t=2}^{k}  {n \choose {\ell_t} } (t-1)^{\ell_t}  (n-\ell_t)! =  n !  \prod_{t=2}^k n!  \frac{(t-1)^{\ell_t} } {\ell_t ! } = (n !)^k  \prod_{t=2}^k   \frac{(t-1)^{\ell_t} } {\ell_t ! }. $$

\noindent By the multinomial identity  and the definition of the set $\mathcal L$,

$$\sum_{L \in \mathcal L}   \prod_{t=2}^k   \frac{(t-1)^{\ell_t} } {\ell_t ! }  =  \frac{1}{a!} (1+ \dots + (k-1)) ^a = \frac{ { k \choose 2}^a}{a!}. $$

\noindent Therefore

\begin{equation} \label{bound1} M(a) = \sum_{L \in \mathcal L} N_L \le   (n!)^k \sum_{ L \in \mathcal L } \prod_{t=2}^k   \frac{(t-1)^{\ell_t} } {\ell_t ! }  = (n!)^k  \frac{ {k \choose 2} ^a}{a!}. \end{equation}

This estimate is sufficient in the case when $a$ is relatively large. However, it is too generous in the case when $a$ is small (the main contribution in LHS of \eqref{rearranged}  comes from this case). In order to sharpen the bound, we refine the estimate on the number of possible $M_t$'s that one can choose in the last step of the algorithm, call this number $\mathcal M_t$ (clearly, $\mathcal M_t$ also depends on the $B_{i,t}$s and we estimate a worse case scenario).
Let $G_t$ be the bipartite graph between $V_1 \backslash B_{1,t} $ and $V_2 \backslash B_{2,t}$ formed by the edges which are not in $\cup_{j=1}^{t-1}P_j$. For each $v \in V_1 \backslash B_{1,t} $,  let $d_v$ be its degree in $G_t$.  By the
Bregman-Minc inequality (see theorem \ref{bregman})

$$\mathcal M_t \le \prod_{ v \in V_1 \backslash B_{1,t} }  (d_v !)^{1/d_v} . $$

\noindent It is clear from the definition that for each $v$

\[
d  := n- \ell_t -(t-1) \le d_v \le n -\ell_t : = D
\]
Call a vertex $v$ {\it good} if $d_v = d $ and {\it bad} otherwise. It is easy to see that $v$ is good if and only if it has exactly $t-1$ different edges in $\cup_{j=1}^{t-1} P_j$ and none of these edges hits  $B_{2,t} $. It follows that the number of good vertices is at least
\[
 n- \ell_t (t-1) - \sum_{j=2}^{t-1} \ell_j  \ge n -a(k-1) -a = n - ka .
\]
Since $(d!)^{1/d} $ is monotone increasing, it follows that
\[
\mathcal M_t \le  (d !) ^{\frac{n-ka}{d} }   (D! )^{ \frac{ ka -\ell_t }{D } }.
\]
Comparing to the previous bound of $(n-\ell_t) ! $, we gain a factor of

\begin{equation} \label{bound2}   \frac { (d !) ^{\frac{n-ka}{d} }   (D! )^{ \frac{ ka -\ell_t }{D } } }{ (n-\ell_t) !  } = \left[  \frac{ (d!)^{1/d} } { (D!)^{1/D } } \right] ^{n-ka}. \end{equation}

\noindent A routine calculation (see Appendix) shows that whenever $ka= o( n)$, the RHS is

\begin{equation}  \label{bound3}  (1 + o(1)) e^{- (t-1) }. \end{equation}

\noindent Thus, for such values of $a$, we have

\begin{equation} \label{bound4} M(a)  \le  (n!)^k  \frac{ {k \choose 2} ^a}{a!}  \prod_{t=2}^k (1+o(1)) e^{-(t-1)}  < 2^k \exp\left(- \frac{k(k-1)}{2}  \right)   (n!)^k  \frac{ {k \choose 2} ^a}{a!} ,  \end{equation}  where the constant 2 can be replaced by any constant larger than 1.
\\
\\
Now  we are ready to bound  $\mathbb E X_{n,m} ^k$. Recall (\ref{rearranged})

$$ \mathbb E X_{n,m}^k =\sum_{a=0}^{(k-1)n}M(a)\frac{(m)_{kn-a}}{{(n^2)}_{kn-a}} . $$

\noindent We split the RHS as

$$ \sum_{a=0}^{T}M(a)\frac{(m)_{kn-a}}{{(n^2)}_{kn-a}}+\sum_{a=T+1 }^{(k-1)n}M(a)\frac{(m)_{kn-a}}{{(n^2)}_{kn-a}} = S_1 +S_2. $$

\noindent where $T= p_mek^2$.  The assumption $k^3 =o(n)$ of the lemma guarantees that $kT =o(n)$. Let  $p_m := \frac{m}{n^2} $. By \eqref{bound4} and lemma \ref{lower factorial} and a routine calculation, we have

$$ S_1= \sum_{a=0}^{T}M(a)\frac{(m)_{kn-a}}{(n^2)_{kn-a}}
\leq \frac{2^k(n!)^kp_m^{nk}}{e^{{k\choose 2}}}\exp\left(-\frac{k^2(1-p_m)}{2p_m}+o(1)\right)\sum_{a=0}^{T}\frac{({k\choose 2})^{a}}{a!}p_m^{-a}. $$

\noindent On the other hand,

$$ \sum_{a=0}^{T}\frac{({k\choose 2})^{a}}{a!}p_m^{-a} <  \sum_{a=0}^{\infty }\frac{({k\choose 2})^{a}}{a!}p_m^{-a}  = e^{ { k \choose2 } /p_m }, $$ so

$$S_1 \le  \frac{2^k  (n!)^kp_m ^{nk}}{e^{{k\choose 2}}}\exp\left(-\frac{k^2(1-p_m)}{2p_m}+o(1)\right)e^{{k\choose 2}/p_m}=
  C_1^k(n!)^k p_m^{nk}, $$

 \noindent where $C_1$ is a constant depending on $p_m$. (In fact we can replace the constant $2$ by any constant larger than 1 in the definition of $C_1$; see the remark following \eqref{bound4}).
  To bound $S_2$, we use \eqref{bound1} and lemma \ref{lower factorial} to obtain

 $$S_2 = \sum_{a >T}  M(a)\frac{(m)_{kn-a}}{(n^2)_{kn-a}}
\leq (n!)^kp_m^{nk}  \exp\left(-\frac{k^2(1-p_m)}{2p_m}+o(1)\right)\sum_{a >T } \frac{({k\choose 2})^{a}}{a!}p_m^{-a}. $$

Notice that we no longer have the term $\frac{2^k}{ e^{{ k \choose 2}}}$. However, as $a$ is large, there is a much better way to bound
$\sum_{a >T } \frac{({k\choose 2})^{a}}{a!}p_m^{-a}. $ Stirling's approximation yields

$$ \sum_{a >T } \frac{({k\choose 2})^{a}}{a!}p_m^{-a} \le \sum_{a >T} \left(\frac{ek^2}{ 2  p_m a } \right)^a < \sum_{a >T} \left(\frac{1}{2} \right)^a = O(1). $$

\noindent It follows that

$$S_2= o( (n!)^k p_m^{nk}) , $$ and thus is negligible for our needs.  Therefore,

$$\mathbb E  [X_{n,m} ^k]= S_1 + S_2 \le C_1^k (n!) p_m^{nk}. $$

\noindent Finally, note that \eqref{Xexpectation} implies

$$(\mathbb E [X_{n,m}])^k = (n!)^k p_m^{nk} \exp \left(\frac{k(1-p_m  )}{p_m } +O(k/n)\right)  \ge C_2^k(n!)^k p_m^{nk}, $$ for an appropiate constant $C_2$. Thus, we get $\mathbb{E}[X_{n,m}^k]/(\mathbb{E}[X_{n,m}]^k)\leq C^k$ by setting $C:=C_1/C_2$.

\subsection{Proof of Lemma \ref{conc1}}
\begin{proof}[Proof of lemma \ref{conc1}]Let $K_n$ be the complete graph of $n$ vertices and denote by $\mathcal H$ the set of Hamilton cycles in $K_n$. Clearly,
\[
|\mathcal H|=\frac{(n-1)!}{2}
\]
For each $H\in \mathcal H$, let $X_H$ denote the indicator random variable for the event ``$H$ appears in $G(n,m)$". It is easy to see that
\[
\mathbb{E}[X_H]=\frac{(m)_n}{{n\choose 2}_n}
\]Thus,
\begin{align}\label{exp x}
\mathbb{E}[X_{n,m}]=N\frac{(m)_n}{{n\choose 2}_n}
\end{align}where above and henceforth we let $N:=(n-1)!/2$. By lemma \ref{lower factorial},
\[
\frac{(m)_n}{{n\choose 2}_n}=p_m^n \exp\left(-\frac{1-p_m}{p_m}+o(1)\right)
\]Hence, calculating the $k$-th moment we obtain:
\begin{align}\label{k moment}
\mathbb{E}[X_{n,m}^k]=\sum_{H_1,\ldots,H_k\in\mathcal H}\mathbb{E}[X_{H_1}\ldots X_{H_k}]=\sum_{a=0}^{(k-1)n}M(a)\frac{(m)_{kn-a}}{{n\choose 2}_{kn-a}}
\end{align}where $M(a)$ is the number of (ordered) $k$-tuples $(H_1,\ldots,H_k)\in \mathcal H^k$. The following lemma gives us bounds for $M(a)$, and it is true for $k\leq \frac{\log n}{8}$.
\begin{lemma}\label{upper bound Ma}For $M(a)$ defined above, if $0\leq a\leq \log^3 n$ we have:
\[
M(a)\leq 3^k N^k\frac{(k(k-1))^a}{e^{k(k-1)}a!}
\]and for $\log^3 n< a\leq (k-1)n$ we have the following weaker bound:
\[
M(a)\leq 3^kN^k\frac{(k(k-1))^a}{a!}
\]
\end{lemma}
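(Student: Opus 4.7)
The plan is to mirror the structure of the proof of Lemma \ref{conc0}. For each composition $L = (\ell_2, \dots, \ell_k)$ of $a$ into non-negative integers, let $N_L$ denote the number of ordered $k$-tuples $(H_1, \dots, H_k) \in \mathcal{H}^k$ with $|H_t \cap (\cup_{j < t} H_j)| = \ell_t$ for every $2 \le t \le k$. Since every $k$-tuple contributing to $M(a)$ has a unique such composition, $M(a) = \sum_{L \in \mathcal{L}(a)} N_L$ where $\mathcal{L}(a)$ is the set of compositions of $a$. The goal is to bound each $N_L$ by constructing the tuple sequentially.

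To build $H_t$ given $H_1, \dots, H_{t-1}$, first choose the $\ell_t$ edges of $H_t$ that lie in $\cup_{j<t} H_j$. Since $|\cup_{j<t} H_j| \le (t-1)n$, there are at most $\binom{(t-1)n}{\ell_t}$ such choices. The chosen edges form a disjoint union of $p \le \ell_t$ paths inside $H_t$; contracting each such path into a super-vertex, the number of ways to complete these paths into a Hamilton cycle is at most $\tfrac{(n-\ell_t-1)!}{2} \cdot 2^p \le (n-\ell_t-1)! \cdot 2^{\ell_t}$, the factor $2^p$ accounting for the two orientations of each super-vertex. Using Lemma \ref{lower factorial} together with $\binom{(t-1)n}{\ell_t} = ((t-1)n)^{\ell_t}(1+o(1))/\ell_t!$ (valid for $\ell_t \le a \le (k-1)n$, with better error terms when $a$ is small), one checks that the number of valid $H_t$ is at most $2N \cdot (2(t-1))^{\ell_t}/\ell_t!$ up to an absorbable multiplicative error. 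Multiplying across $t = 2, \dots, k$ gives a bound on $N_L$, and summing over $L \in \mathcal{L}(a)$ via the multinomial identity
\[
\sum_{L \in \mathcal{L}(a)} \prod_{t=2}^k \frac{(2(t-1))^{\ell_t}}{\ell_t!} \;=\; \frac{1}{a!}\bigl(2 + 4 + \cdots + 2(k-1)\bigr)^a \;=\; \frac{(k(k-1))^a}{a!}
\]
yields the weaker bound $M(a) \le 3^k N^k (k(k-1))^a/a!$; the constant $3$ absorbs the $(1+o(1))^{k-1}$ terms since $k \le \log n / 8$. Note that $k(k-1) = 2\binom{k}{2}$ is twice the perfect-matching base, which is the expected analogue because each vertex of a Hamilton cycle is incident to two edges rather than one.

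To upgrade to the sharper bound $M(a) \le 3^k N^k (k(k-1))^a / (e^{k(k-1)} a!)$ for $a \le \log^3 n$, I would refine the completion step: instead of counting all $(n-\ell_t-1)! \cdot 2^{\ell_t}$ Hamilton cycles that extend the fixed overlap, count only those that avoid the remaining edges of $\cup_{j<t} H_j$ (otherwise the intersection would strictly exceed $\ell_t$). A uniformly random Hamilton cycle of $K_n$ contains a given edge with probability $2/(n-1)$, so heuristically the fraction of extensions avoiding the $(t-1)n - \ell_t$ ``forbidden'' edges is roughly $(1 - 2/(n-1))^{(t-1)n}(1+o(1)) = e^{-2(t-1)}(1+o(1))$, producing the aggregate factor $e^{-\sum_{t=2}^{k} 2(t-1)} = e^{-k(k-1)}$ once we multiply across $t$.

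The main obstacle is making this heuristic rigorous, i.e.\ proving a deterministic Brégman-type upper bound on the number of Hamilton cycles in $K_n$ that avoid a prescribed small edge set. My plan is to run a permanent-style argument analogous to Theorem \ref{bregman}: after contraction, the graph in which the completion is sought has all but at most $2(t-1) + 2p$ forbidden half-edges at each vertex, and one can bound the number of Hamilton cycles in such a graph in terms of its degree sequence (for instance, via the Alon–Friedland–Kalai or Cuckler–Kahn style permanent bounds for 2-regular orientations). The ratio of this refined bound to $(n-\ell_t-1)! \cdot 2^{\ell_t}$ will then yield the required $e^{-2(t-1)}(1+o(1))$ saving per step, provided $ka = o(n)$, which is guaranteed since $a \le \log^3 n$ and $k \le \log n / 8$. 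Combining the refined per-step bound with the multinomial identity above completes the proof of the sharper estimate.
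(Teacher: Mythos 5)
Your setup (the decomposition $M(a)=\sum_{L}N_L$ over compositions of $a$, the sequential construction of $H_t$, and the multinomial identity giving $(k(k-1))^a/a!$) is exactly the paper's. The genuine gap is in the only hard part of the lemma: the per-step factor $e^{-2(t-1)}$ (hence $e^{-k(k-1)}$ overall). You state the right heuristic (a uniform Hamilton cycle uses a fixed edge with probability $2/(n-1)$, so avoiding the $\approx (t-1)n$ previously used edges should cost $e^{-2(t-1)}$), but you only propose a plan to make it rigorous, and the proposed tool is the wrong one. A Br\'egman/permanent-style bound in terms of the degree sequence, as in Lemma \ref{conc0}, cannot deliver what you need here: the permanent of the (reduced) adjacency matrix counts all cycle covers, and already for the complete graph it exceeds the number of oriented Hamilton cycles $(n-1)!$ by a factor of order $n/e$ (derangements $\sim n!/e$); known degree-sequence upper bounds for Hamilton cycle counts likewise carry polynomial-in-$n$ losses. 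The matching case tolerates none of this slack only because Br\'egman is exactly tight for complete bipartite graphs. In the present lemma you need the completion count to be $(1+o(1))e^{-2(t-1)}(n-\ell_t-1)!$, i.e.\ correct to a constant factor relative to $(n-\ell_t-1)!$, and the loss compounds over the $k-1$ steps: a per-step polynomial loss gives an extra $n^{\Theta(k)}=e^{\Theta(\log^2 n)}$ in $M(a)$, which destroys the conclusion $\mathbb{E}[X_{n,m}^k]\leq C^k(\mathbb{E}[X_{n,m}])^k$ and Corollary \ref{concentration} in the relevant regime $k=\Theta(\log n)$. The paper's proof avoids permanents altogether: after fixing $E_t$ it counts \emph{oriented} Hamilton cycles on the contracted vertex set having no ``bad'' vertex (a vertex followed by a neighbour from its list of forbidden vertices) by inclusion--exclusion, proving matching upper and lower bounds $s_i=(1\pm O(\log^6 n/n))(n-\ell_t-1)!\,(2(t-1))^i/i!$ for $i\leq\log^2 n$ and truncating the alternating sum there; this is precisely where the hypotheses $a\leq\log^3 n$ and $k\leq\log n/8$ enter. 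That estimate, or something equally sharp, is the missing ingredient in your proposal.

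A secondary issue: for the weaker bound you allow $a$ up to $(k-1)n$, but your absorption of the error terms breaks down there. Your per-step count is $\binom{(t-1)n}{\ell_t}\cdot\tfrac{(n-\ell_t-1)!}{2}\cdot 2^{p}\leq \frac{(2(t-1))^{\ell_t}}{\ell_t!}\,n^{\ell_t}(n-\ell_t-1)!$, and comparing with $N\,(2(t-1))^{\ell_t}/\ell_t!$ requires $n^{\ell_t}\leq O(1)\,(n-1)_{\ell_t}$, whose ratio is roughly $e^{\ell_t^2/2n}$ and is super-exponential once $\ell_t=\Theta(n)$; so the claim that the error is ``absorbable'' into $3^k$ is false for large $a$ and needs a separate (cruder, but differently organized) argument in that range.
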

\noindent Splitting the sum in \eqref{k moment},
\begin{align}
\mathbb{E}[X_{n,m}^k]&=\sum_{a=0}^{\log^3 n}M(a)\frac{(m)_{kn-a}}{{n\choose 2}_{kn-a}}+\sum_{a=\log^3 n+1}^{(k-1)n}M(a)\frac{(m)_{kn-a}}{{n\choose 2}_{kn-a}}=S_1+S_2
\end{align}allows us to use lemma \eqref{upper bound Ma}. We bound the two sums separately:
\[
S_1=\sum_{a=0}^{\log^3n}M(a)\frac{(m)_{kn-a}}{{n\choose 2}_{kn-a}}\leq \frac{3^kN^kp_m^{nk}}{e^{k(k-1)}}\exp\left(-\frac{k^2(1-p_m)}{p_m}+o(1)\right)\sum_{a=0}^{\log^3n}\frac{(k(k-1))^a}{a!}p_m^{-a}
\]On the other hand,
\[
\sum_{a=0}^{\log^3n}\frac{(k(k-1))^a}{a!}p_m^{-a}\leq \sum_{a=0}^\infty\frac{(k(k-1))^a}{a!}p_m^{-a}=e^{k(k-1)/p_m}
\]so
\[
S_1\leq\frac{3^kN^kp_m^{nk}}{e^{k(k-1)}}\exp\left(-\frac{k^2(1-p_m)}{p_m}+o(1)\right)e^{k(k-1)/p_m}:=C_1^kN^kp_m^{nk}
\]for some appropriate constant $C_1$ (which depends on $k$). To bound $S_2$:
\[
S_2=\sum_{a>{\log^3n}}M(a)\frac{(m)_{kn-a}}{{n\choose 2}_{kn-a}}\leq 3^kN^kp_m^{nk}\exp\left(-\frac{k^2(1-p_m)}{p_m}+o(1)\right)\sum_{a>{\log^3n}}\frac{(k(k-1))^a}{a!}p_m^{-a}
\]However for this case, it is enough to bound the summation using Stirling's approximation, and use $k=O(\log n)$:
\[
\sum_{a>{\log^3n}}\frac{(k(k-1))^a}{a!}p_m^{-a}\leq \sum_{a>{\log^3n}}\left(\frac{k(k-1)e}{p_ma}\right)^a\leq\sum_{a>{\log^3n}}\left(\frac{1}{2}\right)^a=o(1)
\]It follows that
\[
S_2=o(3^kN^kp_m^{nk}),
\]and is thus totally negligible for our needs. Therefore,
\[
\mathbb{E}[X_{n,m}^k]=S_1+S_2\leq C_1^kN^kp_m^{nk}
\]Finally, raising equation (\ref{exp x}) to the $k$-th power yields:
\[
(\mathbb{E}[X_{n,m}])^k=N^kp_m^{nk}\exp\left(-\frac{(1-p_m)k}{p_m}+o(1)\right)\geq C_2^kN^kp_m^{nk}
\]for some constant $C_2$. Hence,
\[
\frac{\mathbb{E}[X_{n,m}^k]}{(\mathbb{E}[X_{n,m}])^k}\leq (C_1/C_2)^k
\]and setting $C:=C_1/C_2$ finishes the proof.
\end{proof}
\begin{proof}[Proof of lemma \ref{upper bound Ma}]Fix $a\leq \log^3 n$, and let $\mathcal L:=\mathcal L(a)$ be the set of all the sequences $L:=(\ell_1,\ldots,\ell_k)$ of non-negative integers where
\[
\ell_2+\ell_3+\ldots+\ell_k=a
\]For each $L=(\ell_2,\ldots,\ell_k)$, let $N_L$ be the number of $k$-tuples $(H_1,\ldots,H_k)$ such that for $2\leq t\leq k$ we have $|H_t\cap (\cup_{i<t}H_i)|=\ell_t$. Clearly we have,
\[
M(a)=\sum_{L\in \mathcal L}N_L
\]we know describe how to construct $k$-tuples in $N_L$.
\begin{enumerate}
\item Pick an arbitrary $H_1$.
\item Assume we are given $H_1,\ldots, H_{t-1}$. Construct a set $E_t$ of edges, of size $\ell_t$ such that $E_t\subset \cup_{i<t}H_i$.
\item Complete $E_t$ into a Hamilton cycle.
\end{enumerate}Next we analyze the algorithm. Clearly there are $N$ ways to perform the first step. For the moment, assume that the number of ways to perform step 2 and 3 (for a fixed $t$) is given by:
\[
3N\frac{(2(t-1))^{\ell_t}}{e^{2(t-1)}\ell_t!}
\]Then, for fixed $L$ we would have the following upper bound on $N_L$:
\[
N_L\leq 3^kN^k\prod_{t=2}\frac{(2(t-1))^{\ell_t}}{e^{2(t-1)}\ell_t!}
\]by the multinomial identity and the definition of the set $\mathcal L$ we have,
\[
\sum_{L\in \mathcal L}\prod_{t=2}\frac{(2(t-1))^{\ell_t}}{e^{2(t-1)}\ell_t!}=\frac{1}{e^{k(k-1)}a!}(2+4+\ldots+2(k-1))^a=\frac{(k(k-1))^a}{e^{k(k-1)}a!}
\]so we obtain the upper bound on $M(a)$,
\[
M(a)\leq 3^kN^k\frac{(k(k-1))^a}{e^{k(k-1)}a!}
\]as claimed. Hence to finish we need to upper bound steps 2-3 of the algorithm.
\\
\\
\textbf{Upper bound on steps 2 and 3.} Assume we are given $H_1,\ldots,H_{t-1}$. For each vertex $v$, consider the set $L(v)$ defined as follows:
\[
L(v):=\{w\mid vw\in (H_1\cup\ldots\cup H_{t-1})\}
\]which we shall refer to as the list of bad vertices of $v$. Note that for each $v$, we have $|L(v)|\leq 2(t-1)$. Pick a subset $V_t\subset V(K_n)$ of size $\ell_t$, say $V_t=\{u_1,\ldots,u_{\ell_t}\}$. We can do so in ${n\choose \ell_t}$ ways. Then, for each $u_i\in V_t$, we select an element, $w_i$, on its list $L(u_i)$. Perform this selection such that if $i\neq j$, then $w_i\neq w_j$. Note that this might not always be possible, in which case the number of ways to perform this step is zero (and we obtain the upper bound trivially). Having chosen the pairs $(u_i,w_i)$, we are going to match them through an edge. Hence, we have at most
\[
{n\choose \ell_t}(2(t-1))^{\ell_t}
\]number of ways to construct $E_t$. Now our task is to upper bound the number of ways we can complete $E_t$ into a Hamilton cycle without using any edges in $\cup_{i<t}H_i$.
\\
\\
First, we are going to collapse the edges in $E_t$ into vertices, and identify them by $w_i$. Hence, we now have $V(K_n)\backslash V_t$ as vertex set (that is, $n-\ell_t$ vertices). We are going to upper bound a bigger quantity: The number of \textbf{oriented} Hamilton cycles, such that for no vertex $v$, we have $v\rightarrow w$ for some $w\in L(v)$, which henceforth we shall refer to as ``$v$ is bad".
\\
\\
Let $N(t)$ be the quantity we wish to upper bound (that is, the number of oriented Hamilton cycles with no bad vertices). Hence,
\begin{align}
N(t)&=(n-\ell_t-1)!-\sum_{v_1}\#\{H\mid v_1 \text{ bad in }H\}+\sum_{v_1,v_2}\#\{H\mid v_1,v_2 \text{ bad in }H\}-\dots\nonumber\\
=&s_0-s_1+s_2-\cdots
\end{align}where $s_i=\sum_{v_1,\ldots,v_i}\#\{H\mid v_1\ldots,v_i \text{ bad in } H\}$. We now give upper and lower bounds on $s_i$, and we also argue why it is enough to consider the terms up to $i=\log^2 n$:
\\
\\
\textbf{Upper bound on $s_{t}$}: First we choose the $i$ vertices that will be bad. There are ${n-\ell_t\choose i}$ ways to do so. Say we chose $\{v_1,\ldots,v_i\}$. Then there are at most $2(t-1)$ many ways to make each vertex bad, hence a total of at most $(2(t-1))^i$ ways to make $v_r$ bad ($1\leq r\leq i$). Hence, we have $v_r\rightarrow x_r$ for some $x_r$ in its set $L(v_r)$. Collapse $v_r$ and $x_r$ onto a single vertex (for $1\leq r\leq i$), so now we have $n-\ell_t-i$ vertices. Then form any oriented Hamilton cycle on these vertices, so we have $(n-\ell_t-i-1)!$ ways to do so (then uncollapse them to obtain an oriented Hamilton cycles on $n-\ell_t$ vertices). Hence,
\begin{align}
s_i&\leq {n-\ell_t\choose i}(2(t-1))^i(n-\ell_t-i-1)!\nonumber\\
&=\frac{(n-\ell_t)!}{n-\ell_t-i}\cdot \frac{(2(t-1))^i}{i!}\nonumber\\
&=\frac{n-\ell_t}{n-\ell_t-i}\cdot (n-\ell_t-1)!\cdot \frac{(2(t-1))^i}{i!}\nonumber\\
&=\left(1+O\left(\frac{\ell_t+i}{n}\right)\right)(n-\ell_t-1)!\cdot \frac{(2(t-1))^i}{i!}
\end{align}but since we are considering $i\leq \log^2 n$ and $\ell_t\leq a\leq \log^3 n$ we have: $$s_i\leq (1+O(\log ^3 n/n))(n-\ell_t-1)!\cdot \frac{(2(t-1))^i}{i!}$$
\\
\\
\textbf{Truncation:} We show that $|\sum_{i=\log^2 n}^{n-1} (-1)^i s_i|$ is small. Indeed,
\begin{align}
\left|\sum_{i=\log^2 n}^{n-1} (-1)^i s_i\right|&\leq \sum_{i=\log^2 n}^{n-1}\frac{n-\ell_t}{n-\ell_t-i}(n-\ell_t-1)!\cdot \frac{(2(t-1))^i}{i!}\nonumber\\
&\leq (n-\ell_t-1)! \sum_{i=\log^2 n}^{n-1}n\frac{(2(t-1))^i}{i!}\nonumber\\
&\leq(n-\ell_t-1)!n^2\frac{(2(t-1))^{\log^2n}}{(\log^2 n)!}\nonumber\\
&=(n-\ell_t-1)!o(e^{-2(t-1)}/n)
\end{align}where the second to last inequality holds since the summands are in decreasing order (as $t$ is at most $\frac{\log n}{8}$).
\\
\\
\textbf{Lower bound on $s_t$}: For this bound, we are only going to consider $\{v_1,\ldots,v_i\}$ such that their lists are disjoint. Intuitively, almost all ${n-\ell_t\choose i}$ options are good since the sizes of the lists are of order $t$ (which will be logarithmic). Let $\alpha_i$ be the number of $\{v_1,\ldots,v_i\}$ such that $L(v_t)\cap L(v_r)=\emptyset$ for $t\neq r$ and $|L(v_r)|=2(t-1)$. Hence,
\begin{align}
s_i&\geq \alpha_i (2(t-1))^i(n-\ell_t-i-1)!\nonumber\\
&=\left(\frac{\alpha_i}{{n-\ell_t\choose i}}\right){n-\ell_t\choose i}(2(t-1))^i(n-\ell_t-i-1)!\nonumber\\
&=\left(\frac{\alpha_i}{{n-\ell_t\choose i}}\right)\left(1+O\left(\frac{i+\ell_t}{n}\right)\right)(n-\ell_t-1)!\cdot \frac{2(t-1))^i}{i!}
\end{align}Now, we compute $\alpha_i$: First we choose $v_1$ so that $|L(v)|=2(t-1)$. There are $n-\ell_t-O(\log^3 n)$ options for $v_1$. Then, choose $v_2$ so that $|L(v_2)|=2(t-1)$ and $L(v_2)\cap L(v_1)$ is empty. There are at most $(2(t-1))^2$ many vertices, $u$, such that $L(u)\cap L(v_1)$ is not empty (to see this note that $L(v_1)$ has size $(2(t-1))$ and each member of $L(v_1)$ is in at most $(2(t-1))$ many lists). Hence, the number of ways to pick $v_2$ is at least $n-\ell_t-O(\log^3 n)-(2(t-1))^2$. Continue in the manner to obtain (after dividing by the $i!$ that comes from double counting) the following lower bound:
\begin{align}
\alpha&\geq \frac{(n-\ell_t-O(\log^3 n))(n-\ell_t-O(\log^3 n)-(2(t-1))^2)\cdots (n-\ell_t-O(\log^3 n)-(i-1)(2(t-1))^2)}{i!}\nonumber\\
&\geq \frac{(n-\ell_t-O(\log^4 n ))^i}{i!}
\end{align}where the last inequality uses $i\leq \log^2n$ and $t\leq (\log n)/8$. We compare with ${n-\ell_t\choose i}$ as follows:
\begin{align}
\frac{\alpha_i}{{n-\ell_t\choose i}}&\geq\frac{\frac{(n-\ell_t-O(\log^4 n ))^i}{i!}}{\frac{(n-\ell_t)_i}{i!}}\nonumber\\
&=\frac{(n-\ell_t-O(\log^4 n ))^i}{(n-\ell_t)_i}\nonumber\\
&=\frac{(n-\ell_t)^i(1-O(\log^4 n/n))^i}{(n-\ell_t)^i(1+O(i^2/n))}\nonumber\\
&=(1-O(\log^{6}n/n))
\end{align}where above we use $(n-\ell_t)_i=(n-\ell_t)^i(1+O(i^2/n))$ which is valid for $i\leq \log^2 n$. Hence, putting everything together we arrive at the lower bound:
\[
s_i\geq (1-O(\log^{6}n /n))(n-\ell_t-1)!\cdot \frac{2(t-1))^i}{i!}
\]Hence, we have that for all $i\leq \log^2 n$ the following bounds on $s_i$:
\[
(1-O(\log^{6}n /n))(n-\ell_t-1)!\cdot \frac{2(t-1))^i}{i!}\leq s_i\leq (1+O(\log^{6}n /n))(n-\ell_t-1)!\cdot \frac{2(t-1))^i}{i!}
\]which implies:
\begin{align}
\sum_{i=0}^{\log^2 n}(-1)^is_i&\leq\sum_{i=0}^{\log^2n}(n-\ell_t-1)!\frac{(-2(t-1))^i}{i!}(1+(-1)^iO\left(\log^{6}n/n\right))\nonumber\\
&\leq \left(\sum_{i=0}^{\log^2 n}(n-\ell_t-1)!\frac{(-2(t-1))^i}{i!}\right)+\left(\sum_{i=0}^{\log^2 n}(n-\ell_t-1)!\frac{(2(t-1))^i}{i!}O\left(\frac{\log^{6}n}{n}\right)\right)\nonumber\\
&\leq (n-\ell_t-1)!\left(e^{-2(t-1)}(1+o(1))+e^{2(t-1)}\cdot O\left(\frac{\log^6n}{n}\right)\right)\nonumber\\
&=(n-\ell_t-1)!e^{-2(t-1)}\left((1+o(1)+O\left(\frac{e^{4(t-1)}\log^6 n}{n}\right)\right)\nonumber\\
&=(n-\ell_t-1)!e^{-2(t-1)}(1+o(1))\nonumber
\end{align}where the last equality uses the fact that $t\leq k\leq \frac{\log n}{8}$. Putting everything together we have:
\begin{align}
\sum_{i=0}^{n-1}(-1)^is_i&=\sum_{i=0}^{\log^2n}(-1)^is_i+\sum_{i=\log^2n}^{n-1}(-1)^is_i\nonumber\\
&\leq \sum_{i=0}^{\log^2n}(-1)^is_i+(n-\ell_t-1)!o(e^{-2(t-1)}/n)\nonumber\\
&\leq (n-\ell_t-1)!e^{2(t-1)}(1+o(1))\nonumber
\end{align}Thus, the number of ways to complete $E_t$ into a Hamilton cycles is upper bounded by:
\[
(n-1-\ell_t)!e^{-2(t-1)}(1+o(1))
\]Putting it together with the upper bound on the number of ways to construct $E_t$ we obtain that the upper bound on Steps 2 and 3 of our algorithm is given by:
\begin{align}
(1+o(1))(n-1-\ell_t)!e^{-2(t-1)}{n\choose \ell_t}(2(t-1))^{\ell_t}=(1+o(1))2N\frac{(2(t-1))^{\ell_t}}{e^{2(t-1)}}\leq 3N\frac{(2(t-1))^{\ell_t}}{e^{2(t-1)}}\nonumber
\end{align}

\end{proof}

\section{Appendix}
\textbf{Proof of lemma \ref{lower factorial}:}
Let $t, \ell$ be such that $\ell=o(t^{2/3})$. Then,
\begin{align}
(t)_\ell&=t(t-1)\cdots (t-\ell+1) \nonumber\\
&=t^{\ell}\prod_{i=0}^{\ell-1}(1-i/t)\nonumber\\
&=t^{\ell}\prod_{i=0}^{\ell-1}e^{-i/t+O(i^2/t^2)}\nonumber\\
&=t^{\ell}\exp\left(\sum_{i=0}^{\ell-1}-i/t+O(i^2/t^2)\right)\nonumber\\
&=t^{\ell}\exp\left(-\frac{\ell(\ell-1)}{2t}+O(\ell^3/t^2)\right)\nonumber\\
&=t^{\ell}\exp\left(-\frac{\ell(\ell-1)}{2t}+o(1)\right)\nonumber
\end{align}as claimed.\\
\\
\textbf{Approximation of expected value (Perfect matchings):} For a subgraph $H$ of $K_{n,n}$ with exactly $h$ edges, the probability that $H$ appears in $B(n,m)$ is exactly:
$$
\frac{{n^2-h \choose m-h}}{{n^2 \choose m}}=\frac{(m)_h}{(n^2)_h}
$$Let $H$ be a perfect matching on $K_{n,n}$, then $h=n$, so we can apply Lemma \ref{lower factorial} to obtain:
\begin{align*}
\frac{(m)_n}{(n^2)_n}&=\frac{ m^n \exp\left(-\frac{n(n-1)}{2m}+O(1/n)\right)}{(n^2)^n\exp\left(-\frac{n(n-1)}{2n^2}+O(1/n)\right)}\\
&=\frac{m^n}{(n^2)^n}\exp\left(-\frac{n^2}{2m}+\frac{1}{2}+O(1/n)\right)\\
&=p_m^n\exp\left(-\frac{1-p_m}{2p_m}+O(1/n)\right)
\end{align*}where in the last equality we used $p_m:=m/n^2$. Since there are a total of $n!$ perfect matchings, we obtain by linearity:
\[
\mathbb{E}[X_{n,m}]=n!p_m^n\exp\left(-\frac{1-p_m}{2p_m}+O(1/n)\right)
\]
\textbf{Approximation of expected value (Hamilton cycles):} Just like above, let $H$ be a hamilton cycle in $K_n$. Then the probability that $H$ appears in $G(n,m)$ is given by:
\begin{align*}
\frac{{{n\choose 2}-n \choose m-n}        }{     {{n\choose 2}\choose m      }}&=\frac{(m)_n}{{n\choose 2}_n}\\
&=\frac{m^n}{{n\choose 2}^n}\exp\left(-\frac{n^2}{2m}+\frac{n^2}{2{n\choose 2}}+O(1/n)\right)\\
&=p_m^n\exp\left(-\frac{1-p_m}{p_m}+O(1/n)\right)
\end{align*}by linearity, one obtains the desired approximation.
\\
\\
\textbf{Computation of equation \eqref{bound2}}: We are going to use the following upper and lower bounds for the factorial:
\[
\sqrt{2\pi s}(s/e)^s\leq s!\leq \sqrt{2\pi s}(s/e)^se^{1/12s}
\]Hence,
\begin{align}
\left[  \frac{ (d!)^{1/d} } { (D!)^{1/D } } \right] ^{n-ka}&\leq\left[\frac{(\sqrt{2\pi d}(d/e)^de^{1/12d})^{1/d}}{(\sqrt{2\pi D}(D/e)^D)^{1/D}}\right]^{n-ka}\nonumber\\
&=\left[(1+O(n^{-2}))\frac{d(2\pi d)^{1/2d}}{D(2\pi D)^{1/2D}}\right]^{n-ka}\nonumber\\
&=(1+O(n^{-1})\left[\frac{(2\pi d)^{1/2d}}{(2\pi D)^{1/2D}}\right]^{n-ka}\left[\frac{d}{D}\right]^{n-ka}\nonumber\\
&=(1+o(1))\left[1-\frac{t-1}{n-\ell_t}\right]^{n-ka}\nonumber\\
&=(1+o(1))e^{t-1}\nonumber	
\end{align}as desired.  (Here we use the assumption that  $ka =o(n)$.)
\section{Acknowledgments}
We would like to thank Kyle Luh for his useful comments during the draft of this paper.

\end{document}